\newcommand*{\centerfloat}{%
  \parindent \z@
  \leftskip \z@ \@plus 1fil \@minus \textwidth
  \rightskip\leftskip
  \parfillskip \z@skip}
\lstdefinelanguage{Sage}[]{Python}
{morekeywords={False,sage,True},sensitive=true}
\definecolor{dblackcolor}{rgb}{0.0,0.0,0.0}
\definecolor{dbluecolor}{rgb}{0.01,0.02,0.7}
\definecolor{dgreencolor}{rgb}{0.2,0.4,0.0}
\definecolor{dgraycolor}{rgb}{0.30,0.3,0.30}
\DeclareFontFamily{U}{wncy}{}
\DeclareFontShape{U}{wncy}{m}{n}{<->wncyr10}{}
\DeclareSymbolFont{mcy}{U}{wncy}{m}{n}
\DeclareMathSymbol{\Sh}{\mathord}{mcy}{"58}
\def\widebreve{\mathpalette\wide@breve}
\def\wide@breve#1#2{\sbox\z@{$#1#2$}%
     \mathop{\vbox{\m@th\ialign{##\crcr
\kern0.08em\brevefill#1{0.8\wd\z@}\crcr\noalign{\nointerlineskip}%
                    $\hss#1#2\hss$\crcr}}}\limits}
\def\brevefill#1#2{$\m@th\sbox\tw@{$#1($}%
  \hss\resizebox{#2}{\wd\tw@}{\rotatebox[origin=c]{90}{\upshape(}}\hss$}
\newcommand{\res}{\mathbf{R}}
\newcommand{\gm}{\mathbb{G}_m}
\newcommand{\QQ}{\mathbb{Q}}
\newcommand{\ZZ}{\mathbb{Z}}
\newcommand{\T}{\mathbf{T}}
\newcommand{\Td}{\mathbf{T}_1}
\newcommand{\X}{\mathbf{X}^\star}
\newcommand{\La}{\Lambda}
\newcommand{\Ind}{\mathrm{Ind}}
\newcommand{\tr}{\mathrm{tr}}
\numberwithin{equation}{section}
\newtheorem{theorem}{Theorem}[section]
\newtheorem{lemma}[theorem]{Lemma}
\newtheorem{proposition}[theorem]{Proposition}
\newtheorem{corollary}[theorem]{Corollary}
\theoremstyle{remark}
\newtheorem{rem}[theorem]{Remark}
\newtheorem{example}[theorem]{Example}
\title[Explicit Tamagawa numbers of algebraic tori]{Explicit Tamagawa numbers for certain algebraic tori over number fields}
\author{Thomas R\"ud}
\begin{document}
\begin{abstract}
Given a number field extension $K/k$ with an intermediate field $K^+$ fixed by a central element of the corresponding Galois group of prime order $p$, we build an algebraic torus over $k$ whose rational points are elements of $K^\times$ sent to $k^\times$ via the norm map $N_{K/K^+}$. The goal is to compute the Tamagawa number  of that torus explicitly via Ono's formula that expresses it as a ratio of cohomological invariants. A fairly complete and detailed description of the cohomology of the character lattice of such a torus is given when $K/k$ is Galois. Partial results including the numerator are given when the extension is not Galois, or more generally when the torus is defined by an \'etale algebra. 

We also present tools developed in SAGE for this purpose, allowing us to build and compute the cohomology and explore the local-global principles for such an algebraic torus.

Particular attention is given to the case when $[K:K^+]=2$ and $K$ is a CM-field. This case corresponds to tori in $\mathrm{GSp}_{2n}$, and most examples will be in that setting. This is motivated by the application to abelian varieties over finite fields and the Hasse principle for bilinear forms.  
\end{abstract}
\maketitle

\section{Introduction}
The notion of a Tamagawa number as a geometric invariant of an algebraic group over a number field is now a fairly well understood topic. The Tamagawa number is defined as a volume of a certain fundamental domain with respect to a canonical measure. It is known that this volume is closely related to local-global principles and mass formulae. One of the early big contributions to this subject was a formula by Ono \cite{Ono} which computes the Tamagawa number $\tau(\T)$ of an algebraic torus. This formula was refined in \cite{voskresenski_book} into the formula $\displaystyle\tau(\mathbf{T}) = \frac{|\mathrm{Pic}(\mathbf{T})|}{|\Sh^1(\mathbf{T})|}$, where both invariants involved can be computed algebraically in the cohomology of the character lattice $\mathbf{X}^\star(\T)$ of $\T$. The formula can however be hard to evaluate in practical situations, for a general torus. The Tate-Shafarevich group, whose order is the the denominator of the formula, is famously hard to compute and depends heavily on the local structure of the splitting field of the torus. In this article, we evaluate the Tamagawa numbers for a particular class of algebraic tori that arise in situations of the following kind.

To a bilinear form over a number field $k$ we can associate an \emph{adjoint involution} on a $k$-algebra (see \cite{book_involution}). In the event that this $k$-algebra is a field $K$, the fixed points under the involution form a subfield $K^+$. One can look at the elements of $K^\times$ whose image under the norm map $N_{K/K^+}$ belongs to the base field $k$. Those elements in fact form the set of points of an algebraic torus and it turns out that  the Tate-Shafarevich group of this torus determines obstructions to the Hasse principle for the bilinear form we started with. Such a construction was made and explained in \cite{cortella} with $K$ being a CM-field and where an example of a torus with a nontrivial Tate-Shafarevich group was computed.

More generally, the motivation of this article was to give tools and results allowing one to compute the Tamagawa numbers of arbitrary maximal tori of the algebraic group $\mathrm{GSp}_{2n}$ arising as centralizers of regular semisimple elements.  Those tori are always split by an \'etale algebra with an involution fixing an index two subfield in each summand. While some of our results remain true in a more general setting, we focus mainly on the case where the involution is central in the absolute Galois group of $k$, in particular we give results for CM-\'etale algebras. The motivation for this comes from \cite{abvarcount}, which establishes  a mass formula for the size of the isogeny class of an ordinary principally polarized abelian variety over a finite field. This mass formula includes the Tamagawa number of the centralizer of the Frobenius element of the abelian variety, which is such a torus in $\mathrm{GSp}_{2n}$ splitting over a CM-\'etale algebra. The same torus and its Tamagawa number were also objects of interest in \cite{guo} dealing with class numbers of CM algebraic tori.

In order to conjecture some of the results presented in this article, it was very important to be able to compute the cohomology of those tori procedurally. Therefore, many tools in SAGE were implemented to define algebraic tori, including extensive methods to construct and study character lattices. The classes (in the sense of programming) of algebraic tori and $G$-lattices are to be part of a future release of SAGE. This makes defining character lattices via induction, morphisms, quotients, resolutions, sums, etc much easier and lets us compute their cohomology, and in many cases, the Tate-Shafarevich group. Most of the results conjectured using those tools have then led to proofs presented in this article, but some examples of Tate-Shafarevich groups are still only known by those computational methods. It is made clear in this article when an example or a result is only known via computation.

\subsection{Statement of the main results}


Throughout this article, $\mathbf{T}$ belongs to a class of algebraic tori including maximal tori of $\mathrm{GSp}_{2n}$. More precisely, $\T$ is obtained by the following construction:
\[\mathbf{T} = \mathrm{Ker}\left( \mathbb{G}_m\times_{\mathrm{Spec}(k)}\mathbf{R}_{K/k}(\mathbb{G}_m)\underset{(x, y)\mapsto x^{-1}N_{K/K^+}(y)}{\longrightarrow} \mathbf{R}_{K^+/k}(\mathbb{G}_m)\right),\]
where $K, K^+$ are as in \S \ref{setup}. Instead of focusing on a CM-field, we take any Galois field extension of the number field $k$ with intermediate field fixed by a central element of the corresponding Galois group of prime order.  In the later parts of the paper we allow $K$ to be non-Galois, or even more generally, an \'etale algebra which is a direct sum of such fields. We start by reformulating Ono's formula as $\tau(\T) = \frac{|\hat{H}^1(k, \mathbf{X}^\star(\T))|}{|\Sh^2(\mathbf{X}^\star(\T))|}$  and our results are based on a description of the structure of the character lattice for this class of tori.

It turns out that the transfer map (verlagerung) from the Galois group of the splitting field of the torus to the subgroup of elements fixing the intermediate extension $K^+$ is the key concept allowing us to relate the cohomology of $\mathbf{X}^\star(\T)$ to the one of an auxilliary torus, whose cohomology we compute in \S \ref{tder}. We therefore reintroduce this mapping in our precise setting with a slight twist.

We then apply the counting argument given by the transfer map to an explicit computation of the group $\hat{H}^1(k, \mathbf{X}^\star(\T))$, and prove the following in Theorem \ref{thm:h1nongal}:
\begin{theorem} Let $\T$ be a torus associated with an extension $K/k$ as above. Assuming that $K/k$ is Galois and $K^+$ is a subfield of $K$ fixed by a central subgroup of $\mathrm{Gal}(K/k)$ of prime order $p$, we have that $\hat{H}^1(k, \mathbf{X}^\star(\T))$ is trivial when the $p$-Sylow subgroups of $\mathrm{Gal}(K/k)$ are cyclic, and is isomorphic to $\ZZ/p\ZZ$ otherwise. 
\end{theorem}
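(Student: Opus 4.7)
The plan is to compute $\hat{H}^1(G, \X(\T))$ directly, where $G = \mathrm{Gal}(K/k)$, by running Tate cohomology on a short resolution of $\X(\T)$ coming from the definition of $\T$. Dualizing the defining short exact sequence of tori produces a short exact sequence of $G$-lattices
\[0 \to \ZZ[G/H^+] \to \ZZ \oplus \ZZ[G] \to \X(\T) \to 0,\]
where $H^+ = \mathrm{Gal}(K/K^+)$ is the prescribed central subgroup of order $p$, and the leftmost map sends $gH^+ \mapsto (-1, \sum_{h \in H^+} gh)$.

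From here I would take the associated long exact sequence of Tate cohomology. The module $\ZZ[G]$ is cohomologically trivial, Shapiro's lemma identifies $\hat{H}^i(G, \ZZ[G/H^+])$ with $\hat{H}^i(H^+, \ZZ)$ in such a way that the augmentation becomes the corestriction, and $\hat{H}^1(H, \ZZ) = \mathrm{Hom}(H, \ZZ) = 0$ for any finite group $H$. The long exact sequence thus collapses around degree one to
\[0 \to \hat{H}^1(G, \X(\T)) \to \hat{H}^2(H^+, \ZZ) \xrightarrow{\mathrm{cor}} \hat{H}^2(G, \ZZ),\]
reducing the computation to the kernel of corestriction on $\hat{H}^2(-,\ZZ)$.

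Finally I would evaluate that kernel using the transfer. From $0 \to \ZZ \to \QQ \to \QQ/\ZZ \to 0$ and the vanishing of the cohomology of $\QQ$ one obtains $\hat{H}^2(-,\ZZ) \cong \mathrm{Hom}((-)^{ab}, \QQ/\ZZ)$, and under this identification corestriction is the Pontryagin dual of the verlagerung $V: G^{ab} \to H^+$. Hence $\hat{H}^1(G, \X(\T)) \cong \mathrm{coker}(V)^\vee$. Since $H^+$ is central and cyclic of prime order, the standard transfer formula reduces to $V(g) = g^{[G:H^+]} = g^{|G|/p}$. A short check then shows that $V$ hits a generator of $H^+ \cong \ZZ/p\ZZ$ iff some $g \in G$ has order whose $p$-part equals the full $p$-part of $|G|$, iff the Sylow $p$-subgroups of $G$ are cyclic. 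In the cyclic-Sylow case $V$ is surjective and $\hat{H}^1(G, \X(\T)) = 0$; in the noncyclic-Sylow case $\mathrm{coker}(V) \cong \ZZ/p\ZZ$ and $\hat{H}^1(G, \X(\T)) \cong \ZZ/p\ZZ$.

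The main thing to pin down carefully is the pair of identifications used above: that Shapiro's lemma matches the augmentation with corestriction, and that corestriction on $\hat{H}^2(-,\ZZ)$ is Pontryagin dual to the transfer on abelianizations. Both are classical, but they are orientation- and sign-sensitive; once they are fixed, the rest is the explicit computation with the transfer formula to a central subgroup.
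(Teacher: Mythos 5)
Your proposal is correct, and its cohomological skeleton coincides with the paper's second, abstract proof of this statement (Lemma \ref{lem:transfer:generic} combined with Proposition \ref{prop:trans_cyclicness}): both reduce $\hat{H}^1(G,\X(\T))$ to the kernel of the corestriction $\hat{H}^2(N,\ZZ)\rightarrow \hat{H}^2(G,\ZZ)$, identified via $\hat{H}^2(-,\ZZ)\cong \mathrm{Hom}(-,\QQ/\ZZ)$ as the Pontryagin dual of the transfer, citing exactly the Shapiro/corestriction compatibilities you flag as the delicate points. The two genuine differences are presentational and group-theoretic. First, you work directly with the presentation $0\rightarrow \ZZ[G/N]\rightarrow \ZZ\oplus\ZZ[G]\rightarrow \X(\T)\rightarrow 0$ (which the paper only writes down later, in the non-Galois section), whereas the paper reaches the same exact sequence in two steps, via $\hat{H}^i(S,\Lambda)=\hat{H}^{i+1}(S,L)$ and the sequence $0\rightarrow L\rightarrow L_1\rightarrow \ZZ\rightarrow 0$; your one-step version is cleaner for this single computation, while the paper's diagram also feeds the computation of $\hat{H}^i(S,\Lambda)$ for all subgroups $S$ (Theorem \ref{thm:h1h2}), which it needs for the denominator. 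Second, where the paper proves the transfer-triviality criterion by hand — developing the counting function $\varphi$ over several lemmas and a case analysis (Lemmas \ref{lem:indep:reps}--\ref{lem:iotaincyclic}, Proposition \ref{prop:trans_cyclicness}) — you invoke the classical formula $\mathrm{tr}^G_N(g)=g^{[G:N]}$ for the transfer to a central subgroup and settle surjectivity versus triviality by an order computation; this is a legitimate and shorter route, and your verification is sound: $g^{|G|/p}$ is nontrivial precisely when the $p$-part of the order of $g$ is the full $p$-part of $|G|$, which happens for some $g$ exactly when $G_p$ is cyclic, and centrality of $N$ enters exactly where the paper uses it (the paper's $D_9$ example shows the criterion fails otherwise). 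Note also that the paper gives a second, fully explicit proof by constructing a nontrivial cocycle (Theorem \ref{thm:numerator}, plus Proposition \ref{prop:cyclic:h1} for the cyclic case); your argument replaces that entirely, at the cost of the explicit cocycle that the constructive proof produces.
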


Next we focus on the denominator of Ono's formula for this class of tori. This number depends on which subgroups of the Galois group arise as decomposition groups, but given a list of subgroups, we describe an algorithmic way to compute $\Sh^1(\T)$. In particular, if we only consider restriction maps to cyclic subgroups we obtain an invariant $\Sh^1_{\mathscr{C}}(\T)$ with $1\leq |\Sh^1(\T)|\leq |\Sh^1_{\mathscr{C}}(\T)|.$ This leads to the following results:

\begin{theorem}
Assuming that $K/k$ is Galois, let $G = \mathrm{Gal}(K/k)$, $N = \mathrm{Gal}(K^+/k)$, and let $G_p$ be a $p$-Sylow subgroup of $G$.  Then
\begin{itemize}
	\item If $\hat{H}^1(k, \mathbf{X}^\star(\T)) = 0$ (i.e. if $G_p$ is cyclic) then $\tau(\T) = \frac{1}{1}$. (Corollary \ref{cor:gpcyclic}+Theorem \ref{thm:h1h2}). 
	\item If $G$ is abelian then $\Sh^1_\mathscr{C}(\T) = \ZZ/p\ZZ$ if the $p$-Sylow subgroup of $G$ has a presentation $G_p = \ZZ/{p^{i_1}}\ZZ\times\cdots \ZZ/p^{i_n}\ZZ$ with $n>1$, $i_n>\max(i_1,\cdots i_{n-1})$, and $N$ is contained in the summand $\ZZ/p^{i_n}\ZZ$. Otherwise, $\Sh^1_\mathscr{C}(\T) = \Sh^1(\T) = 0$. (Proposition \ref{prop:sha:abelian}).
	\item If $G_p$ is not cyclic and $N$ is not contained in the commutator subgroup of $G$, then $\Sh^1_\mathscr{C}(\T) = \ZZ/p\ZZ$ if the $p$-Sylow subgroup of $G^{\mathrm{ab}}$ is cyclic or of the form described above. Otherwise $\Sh^1(\T) = \Sh^1_\mathscr{C}(\T) = 0$. (Proposition \ref{prop:denom:iotanotin}).
\end{itemize}
\end{theorem}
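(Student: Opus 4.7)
The plan is to treat each of the three bullets as a separate cohomological computation on the character lattice $\X(\T)$, using Ono's formula in the form
\[\tau(\T) = \frac{|\hat{H}^1(k,\X(\T))|}{|\Sh^2(\X(\T))|}\]
together with the bound $|\Sh^1(\T)| = |\Sh^2(\X(\T))| \leq |\Sh^1_\mathscr{C}(\T)|$ obtained by restricting to cyclic subgroups only. The central device is the transfer map from $G$ to the subgroup fixing $K^+$, together with the explicit description of $\X(\T)$ arising from the kernel construction of $\T$.

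For the first bullet, the preceding theorem shows that $\hat{H}^1(k,\X(\T)) = 0$ whenever $G_p$ is cyclic. Invoking Theorem \ref{thm:h1h2}, which controls the denominator of Ono's formula in terms of the numerator, forces $\Sh^2(\X(\T))$ to be trivial as well. Ono's formula then gives $\tau(\T) = 1/1$.

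For the second bullet (assuming $G$ abelian), I would first localize at $p$ to reduce to the $p$-Sylow $G_p \cong \ZZ/p^{i_1}\ZZ \times \cdots \times \ZZ/p^{i_n}\ZZ$. The preceding theorem provides $\hat{H}^1(k,\X(\T)) \cong \ZZ/p\ZZ$ when $G_p$ is non-cyclic, and the transfer description of $\X(\T)$ makes the unique non-trivial class explicit. I would then test this class against the restriction to each cyclic subgroup $C \leq G_p$, organizing the case analysis by which summands $C$ lies in or spans diagonally. A direct computation should show that all cyclic restrictions vanish precisely when $n > 1$, $i_n > \max_{j<n} i_j$, and $N \subseteq \ZZ/p^{i_n}\ZZ$; in every other configuration, at least one cyclic subgroup detects the class, giving $\Sh^1_\mathscr{C}(\T) = 0$ and hence $\Sh^1(\T) = 0$.

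For the third bullet, I would exploit that $N \not\subseteq [G,G]$, so the image $\bar N$ of $N$ in $G^{\mathrm{ab}}$ is still of order $p$. Since the transfer/verlagerung factors through $G^{\mathrm{ab}}$ and every cyclic subgroup of $G$ projects to a cyclic subgroup of $G^{\mathrm{ab}}$, a careful comparison of the restriction maps for $G$ and $G^{\mathrm{ab}}$ should let me transfer the $\Sh^1_\mathscr{C}$ computation to the abelian quotient paired with $\bar N$. The trichotomy on the shape of the $p$-Sylow of $G^{\mathrm{ab}}$ then reads off from the analysis used for the second bullet, yielding the stated dichotomy between $\ZZ/p\ZZ$ and $0$.

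The main obstacle is the explicit case analysis in the second bullet: exhibiting, for each "bad" shape of $(G_p,N)$, a specific cyclic subgroup whose restriction detects the distinguished class. The delicate sub-case arises when several $i_j$ coincide at the maximum, where additional diagonal cyclic subgroups must be considered, and where the transfer-map identity is essential for tracking exactly when survival occurs.
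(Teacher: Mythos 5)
There is a genuine gap, and it sits at the heart of your plan for the second and third bullets: a confusion of cohomological degree. You correctly record at the outset that $\Sh^1(\T)=\Sh^2(\X(\T))$, but then in bullet two you propose to take ``the unique non-trivial class'' of $\hat{H}^1(k,\X(\T))\cong\ZZ/p\ZZ$ and test \emph{it} against restrictions to cyclic subgroups. That computes $\Sh^1_{\mathscr{C}}$ of the lattice $\La=\X(\T)$, which is a different group from $\Sh^1_{\mathscr{C}}(\T)=\Sh^2_{\mathscr{C}}(\La)$; for instance, for $G=Q_8$ the paper gets $\hat{H}^1(G,\La)\cong\ZZ/2\ZZ$ while $\Sh^2_{\mathscr{C}}(\La)\cong(\ZZ/2\ZZ)^2$, so the kernels in the two degrees cannot agree in general. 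The classes that must be tested live in $\hat{H}^2(G,\La)\cong G^{\mathrm{ab}}=\mathrm{Hom}(G,\QQ/\ZZ)$, and in the abelian case they reduce to the single candidate $t_\iota$. The indispensable input, which your plan never supplies, is the degree-two half of Theorem \ref{thm:h1h2}: for cyclic $S=\langle g\rangle$ one has $\hat{H}^2(S,\La)=S$ if $\iota\notin S$ but $\hat{H}^2(S,\La)=S/N$ if $\iota\in S$, so a $p$-torsion character automatically dies on \emph{every} cyclic subgroup containing $\iota$, and detection reduces to exhibiting $g$ with $t_\iota(g)\neq 0$ and $\iota\notin\langle g\rangle$. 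Your case analysis ``by which summands $C$ lies in or spans diagonally'' has no vanishing criterion to test against until this is in place; with it, the analysis becomes the paper's trichotomy on $m(\iota)$ and $n_{m(\iota)}$ in Proposition \ref{prop:sha:abelian}. Your bullet-three reduction to $G^{\mathrm{ab}}$ is the right idea and matches Proposition \ref{prop:denom:iotanotin}, but it inherits the same degree problem and also needs the small forcing argument that any candidate $\alpha$ satisfies $\langle\alpha\rangle=N$ (from $\alpha^p\in G'$, $\iota\in\langle\alpha\rangle$, and $\iota\notin G'$).

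Your first bullet is also not proved as written. Theorem \ref{thm:h1h2} computes $\hat{H}^1(S,\La)$ and $\hat{H}^2(S,\La)$ for subgroups $S$; it does not ``control the denominator in terms of the numerator,'' and no such implication exists in general: the paper's Example \ref{ex:num1den2} (a non-Galois $QD_{16}$ extension) has trivial numerator but $\Sh^1(\T)=\ZZ/2\ZZ$, so nothing formally ``forces'' the denominator to vanish. The correct argument, Proposition \ref{prop:cyclic:denom}, uses three ingredients you omit: $\Sh^1(\T)$ is $p$-torsion (Lemma \ref{lem:h1torsion}); by Chebotarev every cyclic subgroup, in particular a cyclic $G_p$, occurs as a decomposition group at an unramified prime; and restriction--corestriction then shows the kernel of restriction to $G_p$ is killed by $[G:G_p]$, coprime to $p$, hence trivial on $\Sh^1(\T)$. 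With these repairs your overall architecture (Ono's formula, reduction to the $p$-Sylow, case analysis over cyclic subgroups, passage to $G^{\mathrm{ab}}$ when $\iota\notin G'$) does line up with the paper's proof, but as it stands the proposal tests the wrong cohomology classes and asserts the key triviality in bullet one without its actual mechanism.
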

We also give a general description of $\Sh^1_\mathscr{C}(\T)$ and exhibit computations for Galois groups up to order 256. Then given elements $\alpha\in \Sh^1_\mathscr{C}(\T)\supset \Sh^1(\T)$, we describe which subgroups need to appear as decomposition groups need so that $\alpha\notin \Sh^1(\T)$, which in turn allows us to determine $\Sh^1(\T)$. In the last two cases described in the previous theorem, this description takes a simpler form, which we establish in Proposition \ref{prop:full:sha:abelian}.

In a final section we give an early approach to extending the results, in particular to non-Galois field extensions. First, we briefly comment on the case where $K^+$ is not necessarily fixed by a central element, but any normal cyclic subgroup of $\mathrm{Gal}(K/k)$. We also bring attention to the lack of known lower bound for the Tamagawa number in question, entertaining the possibility of this best lower bound being $0$. The major part of this section is dedicated to computing explicitly the numerator of the Tamagawa number for possibly non-Galois fields. For CM-fields, we give a complete description in terms of a condition that is easy to check computationally. Namely, in  \S \ref{sec:cm:nongal} we prove:
\begin{theorem}
Let $K/\QQ$ be a CM-field with Galois closure $K^\sharp$ and Galois group $G = \mathrm{Gal}(K^\sharp/\QQ)$. We have $\hat{H}^1(\QQ, \mathbf{X}^\star(\T))\subset \ZZ/2\ZZ$. Moreover, $\hat{H}^1(\QQ, \mathbf{X}^\star(\T)) = 0$ if and only if there is $g\in G$ such that $|\langle g\rangle \backslash G/N_2|$ is odd, where  $N_2 = \mathrm{Gal}(K^\sharp/K)$. 
\end{theorem}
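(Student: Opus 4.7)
The plan is to compute $\hat H^1(\QQ, \X(\T))$ as $\hat H^1(G, \X(\T))$, using that $\T$ splits over $K^\sharp$. Setting $N_1 = \mathrm{Gal}(K^\sharp/K^+)$, the defining morphism of $\T$ dualizes to a short exact sequence of $G$-lattices
\[0 \longrightarrow \ZZ[G/N_1] \longrightarrow \ZZ \oplus \ZZ[G/N_2] \longrightarrow \X(\T) \longrightarrow 0,\]
with left map $gN_1 \mapsto (-1,\, \sum_{hN_2 \subset gN_1} hN_2)$. Taking the long exact sequence in Tate cohomology, applying Shapiro's lemma, and using $\hat H^1(H, \ZZ) = 0$ for finite $H$, the relevant portion collapses to
\[0 \longrightarrow \hat H^1(G, \X(\T)) \longrightarrow \hat H^2(N_1, \ZZ) \longrightarrow \hat H^2(G, \ZZ) \oplus \hat H^2(N_2, \ZZ).\]
Under the identification $\hat H^2(H,\ZZ) \cong \mathrm{Hom}(H, \QQ/\ZZ)$, the two outgoing arrows become pullback by the transfer $\mathrm{Ver}\colon G^{\mathrm{ab}} \to N_1^{\mathrm{ab}}$ and ordinary restriction of characters along $N_2 \subset N_1$, respectively.

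Next I would bring in the CM hypothesis to pin down the candidate kernel. Complex conjugation $c$ extends to an element of $G$ that is central (since any embedding between CM fields intertwines the complex conjugations), fixes $K^+$ pointwise, and does not fix $K$, so $c \in N_1 \setminus N_2$. Centrality of $c$ gives $N_1 = N_2\langle c \rangle$ and $[N_1,N_1] \subset N_2$, so any character of $N_1$ trivial on $N_2$ factors through $N_1/N_2 \cong \ZZ/2\ZZ$. This already proves $\hat H^1(\QQ, \X(\T)) \subset \ZZ/2\ZZ$, and reduces the remaining question to whether the unique nontrivial such character $f$ (with $f(c) = 1/2$) vanishes on the image of the transfer.

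The main computation is then to evaluate $f \circ \mathrm{Ver}\colon G \to \QQ/\ZZ$, equivalently to track $\mathrm{Ver}(g) \bmod N_2 \in \ZZ/2\ZZ$. Picking left coset representatives $x_1,\dots,x_n$ for $G/N_1$ and expanding the standard formula $\mathrm{Ver}(g) = \prod_i x_{\sigma_g(i)}^{-1} g x_i$ as a product over $\langle g\rangle$-orbits on $G/N_1$, centrality of $c$ lets the $\langle c\rangle$-parts commute past each other and collapse additively. For an orbit of length $k$ with base $xN_1$, the orbit's contribution to $f(\mathrm{Ver}(g))$ is $0$ exactly when $g^k\cdot xN_2 = xN_2$, i.e.\ when the preimage of that orbit in the double cover $G/N_2 \to G/N_1$ splits as two orbits of length $k$ rather than a single orbit of length $2k$. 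A split fibre contributes $2$ orbits on $G/N_2$ and a non-split fibre contributes $1$, so summing yields the clean parity identity
\[f(\mathrm{Ver}(g)) \;\equiv\; |\langle g \rangle \backslash G/N_2| \pmod 2.\]
Thus $f\circ \mathrm{Ver} = 0$ iff $|\langle g \rangle \backslash G/N_2|$ is even for every $g \in G$, which is the contrapositive of the stated criterion.

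The hardest part will be the orbit-by-orbit bookkeeping that produces the parity identity above: one must carefully handle left/right coset conventions, keep track of the sign coming from the $-1$ in the defining morphism of $\T$, and justify cleanly the split/non-split dichotomy on the double cover $G/N_2 \to G/N_1$. Once that identity is in place the theorem follows immediately from the kernel description.
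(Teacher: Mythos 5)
Your proposal is correct and follows essentially the same route as the paper: the same dual exact sequence $0\to\ZZ[G/N_1]\to\ZZ\oplus\ZZ[G/N_2]\to\mathbf{X}^\star(\T)\to 0$, the identification of $\hat{H}^1(G,\mathbf{X}^\star(\T))$ with characters of $N_1$ killed both by restriction to $N_2$ and by precomposition with the transfer, centrality of complex conjugation to cut the candidates down to $\ZZ/2\ZZ$, and the split/non-split parity count on the double cover $G/N_2\to G/N_1$ yielding $f(\mathrm{tr}^G_{N_1}(g))\equiv |\langle g\rangle\backslash G/N_2| \pmod 2$. The paper merely packages the kernel description through the auxiliary torus $\Td$ and Lemma \ref{lem:num:transfer}, and phrases your split/non-split dichotomy as counting $\rho$-fixed cycles of the permutation $\sigma$ on $G/N_2$ (with non-fixed cycles pairing up), which is the same bookkeeping in different words.
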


In particular, we give the following data:
\begin{proposition}
Let $K/\QQ$ be a CM-field. Then 
\begin{itemize}
	\item If $[K:\QQ]=4$ then $\tau(\T)= 1$ unless $K/\QQ$ is Galois with Galois group $(\ZZ/2\ZZ)^2$, in which case $\tau(\T) = 2$. (Proposition \ref{prop:deg4})
	\item  If $[K:\QQ] = 6$ then $\tau(\T) = 1$. (Proposition \ref{prop:deg6})
	\item If $[K:\QQ] = 8$ then we list the values for $\hat{H}^1(\QQ, \mathbf{X}^\star(\T))$ and $\Sh_\mathscr{C}^1(\T)$ in \cite{comput_website}. 
\end{itemize}
\end{proposition}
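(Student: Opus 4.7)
The plan is to apply Ono's formula $\tau(\T) = |\hat H^1(\QQ, \X(\T))|/|\Sh^2(\X(\T))|$ case by case after classifying the possible Galois closures. A degree-$n$ CM-field $K/\QQ$ has Galois closure $K^\sharp$ with group $G$ embedded transitively into $S_n$ and containing a central involution $c$ (complex conjugation). Scanning the transitive subgroups of $S_n$ with nontrivial center yields three possibilities for $n=4$, namely $\ZZ/4\ZZ$, $(\ZZ/2\ZZ)^2$ (both Galois) and $D_4$ (non-Galois, $[K^\sharp:K]=2$); and three possibilities for $n=6$, namely $\ZZ/6\ZZ$ (Galois), $C_2\times S_3$ (non-Galois with $K^+$ a non-Galois cubic), and $A_4\times\ZZ/2\ZZ$ (non-Galois with $K^+$ cyclic cubic, so that $K^\sharp = K^+(\sqrt{-d},\sqrt{-\sigma(d)},\sqrt{-\sigma^2(d)})$ has degree $24$; the intermediate Kummer rank $2$ is ruled out, because the only $\sigma$-invariant relation would force $-N_{K^+/\QQ}(d)$ to be a square in the totally real field $K^+$).

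For the numerator, the Galois cases fall under the first theorem of the excerpt: the $2$-Sylow is cyclic for $\ZZ/4\ZZ$ and $\ZZ/6\ZZ$, so $\hat H^1 = 0$; it is non-cyclic for $(\ZZ/2\ZZ)^2$, so $\hat H^1 \cong \ZZ/2\ZZ$. For each non-Galois case I apply the CM theorem just stated: choose $g$ of order large enough that $\langle g\rangle N_2 = G$, namely $g = r$ (order $4$) in $D_4 = \langle r, s\rangle$, $g = (c, (1\,2\,3))$ (order $6$) in $C_2 \times S_3$, and $g = (c, \tau)$ with $\tau$ a $3$-cycle (order $6$) in $A_4 \times \ZZ/2\ZZ$. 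In each case $\langle g\rangle \cap N_2 = 1$ and $|\langle g\rangle|\cdot|N_2| = |G|$ (routine since $N_2$ is a proper subgroup of $\mathrm{Gal}(K^\sharp/K^+)$ not containing $c$), so $|\langle g\rangle \backslash G / N_2| = 1$ is odd, and hence $\hat H^1 = 0$.

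For the denominator, the abelian bullet of the second theorem handles the Galois cases: in $\ZZ/4\ZZ$ and $\ZZ/6\ZZ$ the $p$-Sylow is cyclic and $\Sh^1 = 0$ immediately; in $(\ZZ/2\ZZ)^2$ the exponents $i_1 = i_2 = 1$ violate the strict-maximum hypothesis, so $\Sh^1_{\mathscr{C}}(\T) = 0 = \Sh^1(\T)$. Combined with the numerators, this already gives $\tau = 2$ in the biquadratic case and $\tau = 1$ for $\ZZ/4\ZZ$ and $\ZZ/6\ZZ$. For the non-Galois cases the paper's $\Sh$ theorems do not directly apply, so the plan is to verify $\Sh^2(\X(\T)) = 0$ by an explicit cohomological computation: write down a free $G$-resolution of $\X(\T)$ (manageable for $|G|\le 24$) and check that the restriction maps to cyclic subgroups are injective on $H^2(G, \X(\T))$. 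Combined with $\hat H^1 = 0$, this yields $\tau = 1$ in the non-Galois cases as well.

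The main technical obstacle lies in this last denominator computation, since the paper's closed-form $\Sh$ results assume $K/\QQ$ Galois. Fortunately the groups involved have order at most $24$, so the verification is finite and is naturally executed either by hand or with the SAGE tools developed earlier in the paper.
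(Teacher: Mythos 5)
Your quartic analysis is sound and matches the paper: the Galois cases via Theorem~\ref{thm:numerator}, Proposition~\ref{prop:cyclic:h1}, Proposition~\ref{prop:cyclic:denom} and Example~\ref{ex:z2z2}, and the $D_4$ case by exhibiting $g=r$ with $\langle r\rangle N_2 = G$ (so one double coset, odd) plus a finite check that $\Sh^2_{\mathscr{C}}(\X(\T))=0$ --- this is exactly what the paper does with its SAGE lattice computation. The genuine gap is in your sextic classification. In the case where $K^+$ is a non-Galois cubic with closure $M$ of group $S_3$, you only allow $G=C_2\times S_3$, i.e.\ you implicitly assume the classes of $-d,-\sigma(d),-\sigma^2(d)$ in $M^\times/(M^\times)^2$ span a rank-$1$ subgroup. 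Your rank-$2$ exclusion argument is correct (the only one-dimensional $S_3$-stable relation space is the diagonal, forcing $-N_{K^+/\QQ}(d)$ to be a square in a totally real field), but rank $3$ is perfectly possible --- indeed it is the generic case --- and yields $G = (\ZZ/2\ZZ)^3\rtimes S_3 \cong C_2\times S_4$ of order $48$ (the transitive group 6T11). This breaks your claim that all groups involved have order at most $24$, and more importantly your case analysis simply never treats most sextic CM fields. The omission is not harmless for the denominator: since the numerator is trivial in degree $6$, one must still prove $\Sh^2(\X(\T))=0$ to get $\tau(\T)=1$ rather than $\tau(\T)=1/|\Sh^2|$, and Example~\ref{ex:num1den2} shows that a trivial numerator with nontrivial denominator genuinely occurs for non-Galois CM fields, so this cannot be waved away.

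Two further remarks. First, the paper's route to the sextic numerator avoids classification entirely: since complex conjugation $\rho$ is central, $\mathrm{tr}^G_{N_1}(\rho)=\rho^{[K^+:\QQ]}=\rho\notin N_2$ whenever $[K^+:\QQ]$ is odd, so $\hat{H}^1(G,\Lambda)=0$ for every sextic CM field by Lemma~\ref{lem:num:transfer} --- a one-line uniform argument you could adopt, which also repairs the numerator for your missing group (equivalently, $g=\rho$ gives $|\langle\rho\rangle\backslash G/N_2|=3$, odd, in Theorem~\ref{thm:h1nongal}). The denominator, however, still requires extending your finite verification of $\Sh^2_{\mathscr{C}}$ to the order-$48$ group $C_2\times S_4$, which is what the paper's SAGE check of Proposition~\ref{prop:deg6} covers. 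Second, your proposal does not engage with the degree-$8$ bullet at all; that item is purely a pointer to tabulated computations in \cite{comput_website}, so the omission is minor, but a complete write-up should at least say that those values are obtained by the same computational method.
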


Lastly, we consider the  case of CM-\'etale algebras and give some results, with intended applications to the formula in  \cite{abvarcount}.  We again give a somewhat elementary description of the numerator of Ono's formula, and show a few examples. Notably,  the Example \ref{ex:etale:power} shows that if  $K = K_1^{\oplus r}$ for some $r\geq 1$, $K_1$ a CM-field, and $K^+ = (K_1^+)^{\oplus r}$, if we define $\T^{K_1}$ to be the torus associated with $K_1$, then $\tau(\T) = 2^{r-1}\tau(\T^{K_1})$. This can lead to a contruction of tori with large Tamagawa numbers.

Throughout this paper, we used the LMFDB \cite{lmfdb} to create concrete examples of CM-fields and applied the results to compute the corresponding interesting Tamagawa numbers.

\textbf{Acknowledgement.}  I would like to thank Julia Gordon and Jeff Achter for suggesting this problem and constantly providing great interest and encouragement.

I am also immensely grateful to David Roe, who invited me to write the SAGE code used in this article during a coding sprint in August 2018 at the Institute for Mathematics and its Applications in Minneapolis. Since then, David has helped me format code for SAGE, and provided continuous support and advice.

Thank you to Wen-Wei Li with whom I co-authored the appendix of \cite{abvarcount} and briefly shared ideas. He notably introduced me to Kottwitz’s approach to this computation via the dual tori, and although it was not used in the paper, contributed in expanding my knowledge on the topic.

During the final days of proofreading this article, Chia-Fu Yu brought to my attention a project he led with similar goals based on \cite[Appendix A]{abvarcount} that overlap with this paper. His group computed Tamagawa numbers when $[K:K^+]=2$ (in the notations used above) and the field extention $K$ is cyclotomic, as well as a few other cases when $K$ is Galois and $\mathrm{Gal}(K/k) = \mathrm{Gal}(K/K^+)\times \mathrm{Gal}(K^+/k)$. Those results agree with the more general results of Proposition \ref{prop:sha:abelian} and Proposition \ref{prop:denom:iotanotin}. They also obtained the same bounds as Corollary \ref{cor:abeliannum} and examples of CM-fields with conclusions similar to Examples \ref{ex:d8} and \ref{ex:quat:computations}. Both contributions were done independently.

\section{Preliminaries}
\subsection{Definition of the tori}
Consider a finite Galois extension $K/k$ of number fields with intermediate field $K^+$ such that $[K:k] = pn$, and $[K:K^+]=p$ for some prime number $p$ and some integer $n$. Throughout this article we will let $G = \mathrm{Gal}(K/k)$, $N = \langle \iota\rangle = \mathrm{Gal}(K/K^+)$, and $H = \mathrm{Gal}(K^+/k)$. Furthermore, we assume $N$ to be central in $G$, which is automatically true when $p=2$. The notation $K^+$ comes from the main goal of this article, which is to give results related to the main theorem of \cite{abvarcount}. In that case, we have $k=\QQ$, and $K$ is a Galois CM-field with maximal totally real subfield $K^+$, in particular $p=2$. We have an exact sequence 
\begin{equation}\label{ses:group} 
1 \rightarrow N\rightarrow G\rightarrow H\rightarrow 1.
\end{equation}

Consider the algebraic torus of rank $(p-1)n+1$ defined by 
\[\mathbf{T} = \mathrm{Ker}\left( \mathbb{G}_m\times_{\mathrm{Spec}(k)}\mathbf{R}_{K/k}(\mathbb{G}_m)\underset{(x, y)\mapsto x^{-1}N_{K/K^+}(y)}{\longrightarrow} \mathbf{R}_{K^+/k}(\mathbb{G}_m)\right).\]

In particular, $\T(k) = \left\lbrace x\in K^\times : \prod_{\ell=0}^{p-1}\iota^\ell(x) \in k^\times\right\rbrace$.
\begin{example}\label{ex:CM-GSP}
Consider the case where $p=2$ and $K$ is a CM-field over $k=\QQ$, with maximal totally real subfield $K^+$.  We get $\T\subset \mathrm{GSp}_{2n}$. These tori arise in \cite{abvarcount} as centralizers of the Frobenius element corresponding to a principally polarized ordinary abelian variety. Also, in this case, $\iota$ is the Rosati involution. 
\end{example}

For any $G$-module $M$, we let $\hat{H}^i(G, M)=\hat{H}^i(K/k, M)$ denote its $i$-th Tate Cohomology group, and for all $i\in \mathbb{Z}$ we define \[\Sh^i(M) = \mathrm{Ker}\left(  \hat{H}^i(G, M)\rightarrow \prod_{\nu}\hat{H}^{i}(G_v, M)\right),\] where $\nu$ ranges over primes of $K$ and $G_v$ is the corresponding decomposition group (see \cite{platonov_rapinchuk}).

The goal of this article is to give a computation for the Tamagawa number $\tau(\T) = \tau_k(\T)$. We will not recall the definition of Tamagawa numbers, which can be found in the introduction of \cite{voskresenski}. To compute the latter we will focus on the formula given by the following theorem:

\begin{theorem}[\cite{Ono}, p.68]
\label{thm:ono_formula}
Let $\mathbf{T}$ be an algebraic torus over a number field $k$ and splitting over a Galois extension $L$. Then 
\[\tau(\mathbf{T}) =\frac{|\hat{H}^1(L/K,\mathbf{X}^\star(\mathbf{T})|}{|\Sh^1(\mathbf{T}(K))|}. \]
\end{theorem}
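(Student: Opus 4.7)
The plan is to reduce the general case to the two building blocks where the formula can be verified directly: quasi-trivial tori (products of Weil restrictions of $\mathbb{G}_m$), and the cohomological contribution measured by a flasque resolution. Concretely, I would first recall (following Colliot-Thélène--Sansuc) that every $k$-torus $\T$ fits in an exact sequence
\[
1 \longrightarrow \mathbf{F} \longrightarrow \mathbf{Q} \longrightarrow \T \longrightarrow 1,
\]
where $\mathbf{Q}$ is quasi-trivial and $\mathbf{F}$ is flasque, i.e.\ $\hat{H}^{-1}(G',\mathbf{X}^\star(\mathbf{F}))=0$ for every subgroup $G'\le \mathrm{Gal}(L/k)$. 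The dualised sequence of character lattices is a permutation resolution of $\mathbf{X}^\star(\T)$, which will allow me to translate analytic identities on $\T$ into cohomological identities on lattices.

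Next I would handle the base case. For $\mathbf{Q}=\prod_i \res_{K_i/k}\gm$, the Tamagawa measure is compatible with Weil restriction, so $\tau(\mathbf{Q})=\prod_i \tau_{K_i}(\gm)$, and each factor equals $1$ by the classical product formula for idele class groups (this is where the convergence factors trivialise because $\mathbf{X}^\star(\mathbf{Q})$ is a permutation module, making the relevant $L$-factor at $s=1$ regular and equal to the residue of $\zeta_{K_i}$). On the cohomological side, $\hat{H}^1(L/k,\mathbf{X}^\star(\mathbf{Q}))=0$ by Shapiro's lemma and Hilbert 90, and $\Sh^1(\mathbf{Q})=0$ because $H^1(k_v,\mathbf{Q})=0$ at every place, so the ratio is $\tfrac{1}{1}$ and the formula holds tautologically.

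Then I would establish multiplicativity. Using Ono's key input that Tamagawa numbers behave well on isogenies and short exact sequences of tori (each factor contributing a correction term that is itself an Euler--Poincaré characteristic of character lattices), I would extract from $1\to\mathbf{F}\to\mathbf{Q}\to\T\to 1$ the equation
\[
\tau(\T)\cdot \tau(\mathbf{F}) \;=\; \tau(\mathbf{Q}) \cdot (\text{cohomological correction}),
\]
and compare this to the long exact sequence
\[
\cdots\to \hat{H}^0(\mathbf{X}^\star(\T))\to\hat{H}^0(\mathbf{X}^\star(\mathbf{Q}))\to\hat{H}^0(\mathbf{X}^\star(\mathbf{F}))\to\hat{H}^1(\mathbf{X}^\star(\T))\to 0,
\]
together with the Poitou--Tate nine-term exact sequence that identifies $\Sh^1(\T)$ with the Pontryagin dual of $\Sh^2(\mathbf{X}^\star(\T))$. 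The flasqueness of $\mathbf{F}$ makes its local $\hat{H}^{-1}$ vanish, which is exactly what is needed for the correction factor to collapse to the ratio $|\hat{H}^1(\mathbf{X}^\star(\T))|/|\Sh^1(\T)|$.

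The main obstacle will be the multiplicativity step: controlling how the Tamagawa measure transforms under a short exact sequence of tori requires a careful bookkeeping of local Haar measures, discriminant factors, and Artin $L$-function convergence factors at $s=1$, and this is really where Ono's original argument does the nontrivial work. Once that is in hand, the remainder is essentially linear algebra in Tate cohomology combined with Poitou--Tate duality.
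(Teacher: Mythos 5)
You cannot be faulted for not matching the paper's proof here, because the paper gives none: Theorem \ref{thm:ono_formula} is imported verbatim from \cite{Ono}, so the only meaningful comparison is with Ono's original argument. Measured against that, your outline has a genuine gap exactly at the step you flag as the ``main obstacle'', and the gap is not just bookkeeping. The input you invoke --- ``Ono's key input that Tamagawa numbers behave well on isogenies and short exact sequences of tori'' --- overstates what Ono actually proves. Ono has: (i) $\tau(\gm)=1$ (the class number formula computation), (ii) invariance of $\tau$ under Weil restriction, (iii) multiplicativity for direct products, and (iv) a comparison of $\tau$ under \emph{isogenies}, i.e.\ surjections with \emph{finite} kernel. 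Your sequence $1\to\mathbf{F}\to\mathbf{Q}\to\T\to 1$ has a positive-dimensional torus as kernel, and for such sequences the induced sequences on $k$-points and adelic points are not exact (the failure is controlled by $H^1(k_v,\mathbf{F})$ and $\Sh$-type groups), so there is no off-the-shelf ``correction term'' available to Ono; the exact behavior of Tamagawa numbers in short exact sequences of tori was only settled later (Oesterl\'e), and proving your correction formula is essentially equivalent to the theorem itself. There is also a circularity: your displayed equation contains $\tau(\mathbf{F})$, and the only way to evaluate the Tamagawa number of a flasque torus is Ono's formula. Flasqueness kills $\hat{H}^{-1}$ of the character lattice on all subgroups, but it does \emph{not} kill $\hat{H}^1(\mathrm{Gal}(L/k),\X(\mathbf{F}))$, so $\tau(\mathbf{F})\neq 1$ in general and cannot be eliminated from your equation without already knowing the theorem.

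Ono's actual route avoids both problems by a different decomposition. In place of a flasque resolution he uses what is now called Ono's lemma: for any $G$-lattice $M$ there exist an integer $m\geq 1$, permutation modules $P_1,P_2$, and an exact sequence $0\to M^{\oplus m}\oplus P_1\to P_2\to F\to 0$ with $F$ finite --- equivalently, $\T^m\times\mathbf{Q}_1$ is \emph{isogenous} to a quasi-trivial torus $\mathbf{Q}_2$. This reduces everything to inputs (i)--(iv): both sides of the isogeny have known or multiplicatively decomposable Tamagawa numbers, the isogeny comparison (the genuinely analytic step, where the convergence factors and $L$-values at $s=1$ are compared) produces the ratio of cohomological invariants for $\tau(\T)^m$, and one extracts $\tau(\T)$. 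Your base case for quasi-trivial tori and your use of Tate--Nakayama/Poitou--Tate duality to identify $\Sh^1(\T)$ with $\Sh^2(\X(\T))$ are both correct and consistent with how the present paper uses the formula; but as written, your step three assumes a statement at least as strong as the conclusion, so the proposal is a plausible modern reading of the theorem rather than a proof of it. If you want to keep the Colliot-Th\'el\`ene--Sansuc skeleton, you must either import Oesterl\'e's exact-sequence formula as an explicit external input or replace the flasque resolution by Ono's isogeny trick.
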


Using Tate-Nakayama duality (see \cite{platonov_rapinchuk}) one can rewrite the latter equality with $\Sh^1(\mathbf{T}(K)) = \Sh^2(\X(\T))$.

We now define an auxilliary torus of rank $(p-1)n$:
\[\Td = \mathrm{Ker}\left( \mathbf{R}_{K/\mathbb{Q}}(\mathbb{G}_m)\underset{N_{K/K^+}}{\longrightarrow} \mathbf{R}_{K^+/\mathbb{Q}}(\mathbb{G}_m)\right)= \res_{K^+/\QQ}\res_{K/K^+}^{(1)}(\mathbb{G}_m).\]
Here $\Td(k) = \left\lbrace x\in K^\times : \prod_{\ell=0}^{p-1}\iota^\ell(x) =1\right\rbrace$.

\begin{example} In the same setting as Example \ref{ex:CM-GSP} we have $\Td = \T\cap \mathrm{Sp}_{2n}$. Note that $\mathrm{Sp}_{2n}$ is the derived subgroup of $\mathrm{GSp}_{2n}$.
\end{example}

The two tori sit in the exact sequence 
\begin{equation}\label{ses:tori}1\rightarrow \mathbf{T}_1\rightarrow \T\rightarrow \gm\rightarrow 1.\end{equation}
\subsection{Character lattices}

Consider the group algebras $\ZZ[G]$ and $\ZZ[N]$ with respective augmentation ideals $J_G$ and $J_N$. By definition $N= \mathrm{Gal}(K/K^+)= \langle \iota\rangle$.

As $G$-modules, we have $\X(\Td) = \X(\res_{K^+/\QQ}\res_{K/K^+}^{(1)}(\mathbb{G}_m)) = \ZZ[G]/L_1$, where $L_1 = \{a\in \ZZ[G] : \iota a = a\} = \ZZ[G](1+\iota+\cdots+\iota^{p-1}) = \Ind_N^G(J_N)$.

The injection $\Td\subset \T$ yields a surjection $\X(\T)\rightarrow \X(\Td)$, and we get the description $\X(\T) = \ZZ/L$ where $L = L_1\cap J_G$.

We recover the corresponding exact sequences 
\begin{equation}\label{ses:characters} 0\rightarrow \ZZ\rightarrow \X(\T)\rightarrow \X(\Td)\rightarrow 0,
\end{equation}
and 
\begin{equation}\label{ses:characters_dual} 0\rightarrow L\rightarrow L_1\rightarrow \ZZ\rightarrow 0.
\end{equation}

For the sake of simplicity we will now write $\La$ and $\La_1$ to denote $\X(\T)$ and $\X(\Td)$ respectively.

\label{setup}
\subsection{Computation of the Tamagawa number for the auxilliary torus}\label{tder}
The cohomology of $\T_1=\res_{K^+/\QQ}\res_{K/K^+}^{(1)}(\mathbb{G}_m)$ and its character lattice $\La_1$ are very easy to compute, and will be useful for the rest of this article. 

\begin{proposition}\label{prop:tau_tder} We have $\hat{H}^i(G, \La_1) = 0$ if $i$ is even, and $\hat{H}^i(G, \La_1) = N$ if $i$ is odd.
\end{proposition}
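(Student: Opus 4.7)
The plan is to reduce the Tate cohomology of $\La_1$ to that of the cyclic group $N\cong\ZZ/p\ZZ$ with trivial coefficients via a single dimension shift and Shapiro's lemma.

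First I would work with the short exact sequence of $G$-modules
\[
0\longrightarrow L_1\longrightarrow \ZZ[G]\longrightarrow \La_1\longrightarrow 0
\]
coming from the description $\La_1=\ZZ[G]/L_1$ recorded just above. The key observation is that $L_1=\ZZ[G]\cdot N_N$, with $N_N=1+\iota+\cdots+\iota^{p-1}$, is canonically isomorphic to $\Ind_N^G \ZZ$ as a $G$-module: the $\ZZ$-linear map $g\otimes 1\mapsto gN_N$ is well defined on $\Ind_N^G\ZZ=\ZZ[G]\otimes_{\ZZ[N]}\ZZ$ (because $N_N$ is fixed by $N$), $G$-equivariant, and easily seen to be bijective onto $L_1$ by counting coset representatives of $G/N$.

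Next, since $\ZZ[G]$ is a free $\ZZ[G]$-module it is cohomologically trivial in Tate cohomology, so the long exact sequence collapses into dimension-shift isomorphisms $\hat{H}^i(G,\La_1)\cong \hat{H}^{i+1}(G,L_1)$ for every $i\in\ZZ$. Shapiro's lemma applied to the induced module $L_1\cong\Ind_N^G\ZZ$ then gives
\[
\hat{H}^i(G,\La_1)\;\cong\;\hat{H}^{i+1}(G,L_1)\;\cong\;\hat{H}^{i+1}(N,\ZZ).
\]

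The final step invokes the standard Tate cohomology of the finite cyclic group $N$ of order $p$: by $2$-periodicity, $\hat{H}^j(N,\ZZ)=\ZZ/p\ZZ$ when $j$ is even and $\hat{H}^j(N,\ZZ)=0$ when $j$ is odd. Feeding this into the displayed isomorphism yields $\hat{H}^i(G,\La_1)=0$ for $i$ even and $\hat{H}^i(G,\La_1)\cong\ZZ/p\ZZ\cong N$ for $i$ odd, which is the claim.

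The argument is essentially mechanical; the only step requiring any care is the identification $L_1\cong \Ind_N^G \ZZ$, and even that is immediate from the equality $L_1=\{a\in\ZZ[G]:\iota a=a\}=\ZZ[G]\cdot N_N$ already recorded in the setup. There is no serious obstacle beyond tracking the parity shift correctly.
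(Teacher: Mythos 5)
Your proof is correct and uses the same ingredients as the paper's --- Shapiro's lemma, cohomological triviality of $\ZZ[G]$, and the $2$-periodicity of the cohomology of the cyclic group $N$ --- merely in the opposite order: the paper first applies Shapiro to the induced module $\La_1$ to reduce to $\hat{H}^i(N,\ZZ[N]/\ZZ)$ and then dimension-shifts inside $N$ via $0\to\ZZ\to\ZZ[N]\to\ZZ[N]/\ZZ\to 0$, whereas you shift first inside $G$ via $0\to L_1\to\ZZ[G]\to\La_1\to 0$ and then apply Shapiro to $L_1\cong\Ind_N^G(\ZZ)$, both routes landing on the same isomorphism $\hat{H}^i(G,\La_1)\cong\hat{H}^{i+1}(N,\ZZ)$. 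As a minor point in your favor, your identification $L_1\cong\Ind_N^G(\ZZ)$ is the right one --- the paper's setup line writing $L_1=\Ind_N^G(J_N)$ is a slip, since that module has rank $n(p-1)$ rather than the rank $n$ of $L_1=\ZZ[G](1+\iota+\cdots+\iota^{p-1})$.
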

\begin{proof} As a consequence of Shapiro's Lemma, we have \[\hat{H}^i(K/k, \T_1(K))=\hat{H}^i(K/K^+, \res_{K/K^+}^{(1)}(\mathbb{G}_m)(K)).\] Similarly, we get $\hat{H}^i(G, \La_1) = \hat{H}^i(N, \mathbf{X}^\star(\res_{K/K^+}^{(1)}(\mathbb{G}_m))$.   Note that we can write  $ \X(\res_{K/K^+}^{(1)}(\mathbb{G}_m)) = \ZZ[N]/\ZZ$ where we identify $\ZZ$ with its diagonal embedding in $\ZZ[N]$. Taking the cohomology of the short exact sequence 
\[0\rightarrow \ZZ\rightarrow \ZZ[N]\rightarrow \ZZ[N]/\ZZ \rightarrow 0,\]
the middle term being cohomologically trivial, we get  $\hat{H}^i(N, \ZZ[N]/\ZZ) =  \hat{H}^{i+1}(N, \ZZ)$. Since $N$ is cyclic, its cohomology is $2$-periodic, hence $\hat{H}^1(N, \ZZ[N]/\ZZ)  = \hat{H}^2(N, \ZZ) = \hat{H}^0(N, \ZZ)= N$ as desired.
\end{proof}

\begin{corollary} We have $\tau(\T_1) = p$.
\end{corollary}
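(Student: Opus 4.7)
The plan is to apply Ono's formula directly to $\T_1$ and feed in the cohomology computation just established in Proposition \ref{prop:tau_tder}. By Theorem \ref{thm:ono_formula} together with the Tate--Nakayama reformulation recalled right after it, we have
\[\tau(\T_1) = \frac{|\hat{H}^1(G, \La_1)|}{|\Sh^2(\La_1)|},\]
so the task reduces to evaluating the two sides.

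For the numerator, Proposition \ref{prop:tau_tder} gives $\hat{H}^1(G, \La_1) = N$, which has order $p$ since $N = \mathrm{Gal}(K/K^+)$ has order $[K:K^+] = p$. For the denominator, the same proposition shows $\hat{H}^2(G, \La_1) = 0$ (since $2$ is even). Because $\Sh^2(\La_1)$ is by definition the kernel of the restriction map out of $\hat{H}^2(G, \La_1)$, it is a subgroup of the zero group, hence trivial.

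Putting the two together yields $\tau(\T_1) = p/1 = p$, as claimed. There is essentially no obstacle here: the whole content of the corollary is packaged into the preceding proposition, and the only thing to be careful about is to invoke Tate--Nakayama correctly so that the $\Sh$ appearing in Ono's formula is identified with the $\Sh^2$ of the character lattice whose vanishing has just been established.
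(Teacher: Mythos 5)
Your proposal is correct and follows exactly the paper's own argument: both apply Ono's formula (via the Tate--Nakayama identification $\Sh^1(\T_1(K)) = \Sh^2(\La_1)$) and then read off the numerator $|\hat{H}^1(G,\La_1)| = |N| = p$ and the vanishing of the denominator as a subgroup of $\hat{H}^2(G,\La_1)=0$ from Proposition \ref{prop:tau_tder}. No gaps; your write-up is simply a slightly more explicit version of the paper's proof.
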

\begin{proof}
We use Propositions \ref{prop:tau_tder} and \ref{thm:ono_formula}. The numerator is $|\hat{H}^1(G, \La_1)|=p$, and the denominator $\Sh^2(\La_1)$ is a subgroup of $\hat{H}^2(G,\La_1) = 0$, hence $\tau(\T_1)=\frac{p}{1}$.
\end{proof}

Note that since $N$ is cyclic, there will be primes in $K$ inert over $K^+$, and therefore, any torus over $K^+$ splitting over $K$ will have a trivial denominator in the formula of theorem \ref{thm:ono_formula}, since $N$ itself will appear as the decomposition group of an inert prime.

\section{Tools for generic computations on algebraic tori in SAGE}
Local-global principles, and more generally, cohomological invariants of algebraic tori are notoriously hard to compute directly outside of examples fitting in some nice short exact sequences such as norm-one tori with splitting field having particularly nice decomposition groups. In \cite{Ono}, the author proves that the Tamagawa number of $\res_{K/\QQ}^{(1)}(\mathbb{G}_m)$ where $K = \QQ(\sqrt{5}, \sqrt{29}, \sqrt{109}, \sqrt{281})$ is $\frac{1}{4}$. This specific extension is chosen because all its decomposition groups are cyclic, and it is abelian, which lets the author use Lyndon's formula (see \cite[p. 287]{lyndon1948}) to compute its cohomology groups.

More recently, in \cite{hoshi}, the authors have used GAP to compute cohomology of tori over local fields. In this paper, tori are studied through their character lattices with action of the Galois group of their minimal splitting field. The latter is seen as a finite subgroup of $\mathrm{GL}_n(\ZZ)$. This forces the user to input the action of the group as matrices, and also does not allow for someone to consider Galois group with possibly trivial action on the character lattice.

To ease the study of such objects, I implemented the classes of algebraic tori and $G$-lattices (to be seen as lattices of characters of tori) in SAGE. Those methods are to be added in a future release of SAGE and are presently available on my personal webpage with documentation. Here we include a brief description of some of the new SAGE methods with examples of their uses.

Examples such as Ono's can be recreated directly very easily. 

\adjustbox{scale = 0.7, center}{
\begin{lstlisting}
sage: L.<a, b, c, d> = NumberField([x^2-5, x^2-29, x^2-109, x^2-281])
sage: K = L.absolute_field('e')
sage: from sage.schemes.group_schemes.tori
....: import NormOneRestrictionOfScalars
sage: T = NormOneRestrictionOfScalars(K)
sage: T.Tamagawa_number()
1/4
\end{lstlisting}}

Moreover, the tools implemented for $G$-lattices provide many ways to create a lattice. We now show the construction of $\La$ and $\La_1$ for $G = Q_8$, and $N = Z(G)$.

\adjustbox{scale = 0.7, center}{
\begin{lstlisting}
sage: G = QuaternionGroup()
sage: N = G.center()
sage: Gm = GLattice(1); Gm
Ambient lattice of rank 1 with an action by a group of order 1
sage: IL = Gm.induced_lattice(G)
sage: Ld = IL.fixed_sublattice(N)
sage: L = Ld.zero_sum_sublattice()
sage: Lambda_d = IL.quotient_lattice(Ld); Lambda_d
Ambient lattice of rank 4 with a faithful action by a group of order 8
sage: Lambda = IL.quotient_lattice(L); Lambda
Ambient lattice of rank 5 with a faithful action by a group of order 8
\end{lstlisting}}

One can freely use direct sums, pullbacks, restrictions, duals, the norm map, and many more functions to create lattices. Then many methods have been implemented with cohomological uses, such as dimension-shifting, (co)flabby resolutions, restrictions. In particular, we have implemented a method to compute the Tate-Shafarevich groups $\Sh^i$ for $i=1,2$. For $i=1$ the program computes the restriction on $\hat{H}^1$ directly, whereas for $i=2$, since all cyclic subgroups of the Galois group appear as decomposition groups, we can use a flabby resolution of the lattice, to reduce the computation to the case $i=1$. More explicitly, given a group $G$ acting on a lattice $\Lambda$, we follow constructions made in \cite{hoshi} to compute a resolution 
\[0\rightarrow \Lambda \rightarrow P\rightarrow L\rightarrow 0, \]
where $P$ is \emph{permutation}, i.e. can be writte $P = \bigoplus_{i=1}^\ell \ZZ[G/H_i]$ for normal subgroups $H_1, \cdots, H_\ell$, and $L$ is \emph{flabby}, meaning $\hat{H}^{-1}(H, L) = 0$ for all subgroups $H\subset G$. Then by diagram chasing (see \cite[Lemma 2.9.1, Proposition 2.9.2]{multiplicative}), we get $\Sh^2(\Lambda) = \Sh^1(L)$. Moreover, if every decomposition group is cyclic, then $\Sh^2(\Lambda) = \hat{H}^1(G, L)$.

If the group associated to the lattice is the Galois group of a number field extension, then the algorithm will build every decomposition group, otherwise it will assume that every decomposition group is cyclic and the user can input the list of desired non-cyclic decomposition groups.

Continuing our example with $G = Q_8$.

\adjustbox{scale = 0.7, center}{
\begin{lstlisting}
sage: for i in range(-5, 6):
....:     print("H^"+str(i)+": Lambda:"
....: ,Lambda.Tate_Cohomology(i),", Lambda_d:",
....:  Lambda_d.Tate_Cohomology(i))
....:     
....:     
H^-5: Lambda: [] , Lambda_d: [2]
H^-4: Lambda: [4] , Lambda_d: []
H^-3: Lambda: [2] , Lambda_d: [2]
H^-2: Lambda: [2, 2] , Lambda_d: []
H^-1: Lambda: [2] , Lambda_d: [2]
H^0: Lambda: [4] , Lambda_d: []
H^1: Lambda: [2] , Lambda_d: [2]
H^2: Lambda: [2, 2] , Lambda_d: []
H^3: Lambda: [] , Lambda_d: [2]
H^4: Lambda: [4] , Lambda_d: []
H^5: Lambda: [2] , Lambda_d: [2]
\end{lstlisting}}

This confirms the computations of the cohomology of $\La_1$ done in the previous section, as well as computing the numerator of $\tau(\T)$, which is $|\hat{H}^1(G, \La)| = 2$. For the denominator, note that the subgroups of $Q_8$ are either cyclic or $Q_8$ itself. Therefore, if $Q_8$ appears as a decomposition group then $\Sh^1(\T) = 0$ and $\tau(\T) = 2$, otherwise  we have

\adjustbox{scale = 0.7, center}{
\begin{lstlisting}
sage: Sha = Lambda.Tate_Shafarevich_lattice(2); Sha
[2, 2]
\end{lstlisting}}
and so $\tau(\T)=\frac{1}{2}$.

Those methods have been used to compute the Tamagawa numbers of tori for every field extension up to degree $16$ and helped greatly with conjecturing the results proved in the rest of the article. We note that for some of these cases, no other method of finding the Tamagawa number is known.
\section{Transfer map as a counting function}
\subsection{Definition and properties}
Let $\mathrm{tr} = \mathrm{tr}^G_N : G\rightarrow N$ denote the usual transfer map as defined in cite{rotman}. We will need some algebraic manipulations on the character lattice of $\T$ to build cocycles explicitely. We will use a counting functions directly related to $\tr$, therefore this section will be mostly proving results already known about $\tr$ in our setting, and no prior knowledge of transfer maps.

We start with a finite group $G$  and a central subgroup $N$ fitting in the short exact sequence 
\[1\rightarrow N\rightarrow G\rightarrow H = G/N\rightarrow 1.\]

We let $G_p, G^\mathrm{ab}$ denote a $p$-Sylow subgroup, and the abelianization of $G$ respectively. Write $|G_p| = p^r$ for some $r\in \mathbb{N}$.

For $g\in G$ we will write $\overline{g}=gN\in H$. For each coset $h\in H$, let $\hat{h}\in G$ be a representative.

Let $g\in G$ and $h\in H$. Since $h = \widehat{h}N$, we have $g\widehat{h}\in \widehat{\overline{g}h}N=\{\iota^i\widehat{\overline{g}h} : 1\leq i\leq p\}$.

Define the map $\psi : G\times H\rightarrow \mathbb{Z}/p\ZZ$ by  $g\widehat{h} = \iota^{\psi(g,h)}\widehat{\overline{g}h}$. We want to study the map $\varphi : G\rightarrow \ZZ/p\ZZ,\  g\mapsto \sum_{h\in H}\psi(g,h)$. It is clear from the definition of the transfer map that we have $\mathrm{tr} = g\mapsto \iota^{\varphi(g)}$, but we will show directly that it is well-defined and independent on the choice of representatives. We can immediately check from the definition that $\breve{\varphi}(1) = \hat{\varphi}(1) = 0$ and $\breve{\varphi}(\iota)=\hat{\varphi}(\iota) = |H|$, regardless of the choice of representatives.

\begin{lemma}\label{lem:indep:reps} $\varphi$ does not depend on the choice of representatives.
\end{lemma}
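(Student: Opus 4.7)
The plan is to reduce to a single elementary modification of the representatives and then track the defining equation $g\hat{h} = \iota^{\psi(g,h)} \widehat{\overline{g}h}$ at the two positions where the altered representative appears. First I would note that any two systems of coset representatives differ by a finite sequence of replacements of the form $\hat{h}_0 \leftarrow \iota^a \hat{h}_0$ for one coset $h_0 \in H$ and one $a \in \ZZ/p\ZZ$, so it suffices to verify that $\varphi(g) = \sum_{h \in H} \psi(g,h)$ is unchanged under such a single replacement, for arbitrary $g \in G$.

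Next I would fix $g \in G$ and examine where $\hat{h}_0$ enters the defining equation: it appears on the left precisely when $h = h_0$, and on the right precisely when $\overline{g}h = h_0$, i.e.\ when $h = \overline{g}^{-1}h_0$. When $\overline{g} \neq 1$ these two indices are distinct, and using that $\iota$ is central one finds that $\psi(g, h_0)$ increases by $a$ while $\psi(g, \overline{g}^{-1}h_0)$ decreases by $a$, with every other $\psi(g,h)$ unchanged. Those two contributions cancel in the sum, so $\varphi(g)$ is invariant. A clean way to package this bookkeeping is to observe that $h \mapsto \overline{g}h$ is a bijection of $H$, which is exactly what guarantees that the positive and negative perturbations occur at distinct indices and with equal multiplicity.

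The one point requiring care is the edge case $g \in N$, where $\overline{g} = 1$ forces the two positions to coincide at $h = h_0$. The hard part, such as it is, will be checking that the two perturbations still cancel even when they happen at the same index: the factor $\iota^a$ now multiplying the left-hand side is absorbed by the same factor appearing in the new right-hand side representative, so $\psi(g, h_0)$, and hence $\varphi(g)$, is left unchanged. Once this case is handled, independence from the choice of representatives follows for every $g \in G$, completing the proof.
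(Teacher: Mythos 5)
Your proof is correct and takes essentially the same route as the paper's: reduce to a single replacement $\hat{h}_0 \leftarrow \iota^a\hat{h}_0$, then observe that the only affected values are $\psi(g,h_0)$, which gains $a$, and $\psi(g,\overline{g}^{-1}h_0)$, which loses $a$, so the sum $\varphi(g)$ is unchanged. Your explicit handling of the coincident case $g\in N$ (where centrality of $\iota$ makes the two perturbations absorb each other at the single index $h_0$) is slightly more complete than the paper, which instead verifies $\varphi$ on $N$ directly from the definition, but the argument is the same in substance.
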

\begin{proof} Fix $h_0\in H$. Consider another choice of representative map ``1$\breve{}$'' such that $\breve{h_0} = \iota \hat{h_0}$ and $\breve{h} = \hat{h}$ for all $h\in H\backslash\{h_0\}$. The general case is obtained by repeating this process since $h = \hat{h}N$, every representative differ by a power of $\iota$. Write $\widehat{\psi}, \widebreve{\psi}, \hat{\varphi}, \breve{\varphi}$ the corresponding maps.  Let $g\in G\backslash\{1, \iota\}$. If $h\notin \{\overline{g}^{-1}h_0, h_0\}$ then $g\breve{h} = g\hat{h} = i^{\widehat{\psi}(g,h)}\widehat{\overline{g}h} = i^{\widehat{\psi(g,h)}}\widebreve{\overline{g}h}$, hence $\widehat{\psi}(g,h) = \widebreve{\psi}(g,h)$. Now observe that 
\[g\widebreve{h_0} = \iota g\widehat{h_0} = \iota \iota^{\hat{\psi}(g,h_0)} \widehat{\overline{g}h_0} =\iota^{\hat{\psi}(g,h_0)+1} \widebreve{\overline{g}h_0} , \]
and 
\[g\widebreve{\overline{g}^{-1}h_0} = g\widehat{\overline{g}^{-1}h_0} =  \iota^{\hat{\psi}(g,\overline{g}^{-1}h_0)} \widehat{h_0} =\iota^{\hat{\psi}(g,\overline{g}^{-1}h_0)} \iota^{-1}\widebreve{h_0}=\iota^{\hat{\psi}(g,\overline{g}^{-1}h_0)-1}\widebreve{h_0}.\]
Therefore, $\breve{\psi}(g, h_0) = \hat{\psi}(g, h_0)+1$ and  $\breve{\psi}(g, \overline{g}^{-1}h_0) = \hat{\psi}(g, \overline{g}^{-1}h_0)-1$, hence they compensate each other and $\hat{\varphi}(g) = \breve{\varphi}(g)$.
\end{proof}

\begin{rem}
For $p=2$, the function $\varphi$ counts the parity of the number of representatives of elements of $H$ which are not sent to another representative under the multiplication by $\iota$. 
\end{rem}

\begin{proposition} \label{prop:phi:homom} $\varphi$ is a group homomorphism.
\end{proposition}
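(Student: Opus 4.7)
The plan is to derive a cocycle-type identity for $\psi$ and sum over $h \in H$. The natural thing to compute is $g_1 g_2 \widehat{h}$ in two ways. First, by applying the defining relation directly, one obtains $g_1 g_2 \widehat{h} = \iota^{\psi(g_1 g_2, h)} \widehat{\overline{g_1 g_2} h}$. On the other hand, applying the relation twice (first to $g_2$, then to $g_1$) gives
\[ g_1 g_2 \widehat{h} = g_1 \iota^{\psi(g_2, h)} \widehat{\overline{g_2} h} = \iota^{\psi(g_2, h)} g_1 \widehat{\overline{g_2} h} = \iota^{\psi(g_2, h) + \psi(g_1, \overline{g_2} h)} \widehat{\overline{g_1 g_2} h}, \]
where the middle equality is the crucial use of the centrality of $N = \langle \iota \rangle$ in $G$. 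Comparing the two expressions yields the identity
\[ \psi(g_1 g_2, h) = \psi(g_2, h) + \psi(g_1, \overline{g_2} h) \pmod{p}. \]

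Summing this over $h \in H$ and noting that the map $h \mapsto \overline{g_2} h$ is a bijection of $H$, the right-hand side becomes $\varphi(g_2) + \varphi(g_1)$, proving that $\varphi(g_1 g_2) = \varphi(g_1) + \varphi(g_2)$. By Lemma \ref{lem:indep:reps}, this statement does not depend on the choice of representatives, so there is no ambiguity to deal with.

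The main (and only) conceptual obstacle is the use of centrality: without it, moving $g_1$ past $\iota^{\psi(g_2, h)}$ would introduce conjugates of $\iota$ and the computation would break down. This is consistent with the fact that the transfer map is only well-defined into the abelianization in general; here, because $N$ is central, we land in $N$ itself (which explains why this hypothesis is imposed from the start in \S \ref{setup}). Everything else is a routine bookkeeping of exponents in $\mathbb{Z}/p\mathbb{Z}$.
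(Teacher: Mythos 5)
Your proof is correct and follows essentially the same route as the paper, which likewise asserts the cocycle identity $\psi(g_1g_2, h) = \psi(g_1, \overline{g}_2h)+\psi(g_2,h)$ and sums it over $H$. You merely supply the details the paper leaves implicit, namely the two-step computation of $g_1g_2\widehat{h}$ (where centrality of $N$ is indeed what lets $g_1$ commute past $\iota^{\psi(g_2,h)}$) and the observation that $h\mapsto \overline{g}_2h$ permutes $H$.
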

\begin{proof}
 For the previous claim, notice that we have $\psi(g_1g_2, h) = \psi(g_1, \overline{g}_2h)+\psi(g_2,h)$ for all $g_1, g_2\in G$ and $h\in H$ . Summing this relation over $H$ yields the desired result.
\end{proof}

\begin{corollary}$\varphi$ factors through $(G^{\mathrm{ab}})_p$. If $p||H|$ (equivalently $r>1$) then  $N\subset \mathrm{Ker}(\varphi)$. 
\end{corollary}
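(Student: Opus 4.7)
The plan is to deduce both assertions directly from Proposition~\ref{prop:phi:homom} (that $\varphi \colon G \to \mathbb{Z}/p\mathbb{Z}$ is a group homomorphism), combined with the explicit value $\varphi(\iota) = |H|$ already recorded just before Lemma~\ref{lem:indep:reps}.

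For the factorization statement, I would exploit that the target $\mathbb{Z}/p\mathbb{Z}$ is an abelian $p$-group. Abelianness forces $\varphi$ to vanish on the commutator subgroup $[G,G]$, so it descends to a homomorphism $G^{\mathrm{ab}} \to \mathbb{Z}/p\mathbb{Z}$. Decomposing the finite abelian group $G^{\mathrm{ab}}$ as $(G^{\mathrm{ab}})_p \times (G^{\mathrm{ab}})_{p'}$, the prime-to-$p$ factor has no nonzero map into a $p$-group and hence lies in the kernel, producing the required factorization through $(G^{\mathrm{ab}})_p$.

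For the second claim, I would first dispatch the parenthetical equivalence: writing $|G| = p^r m$ with $\gcd(p,m)=1$ and using $|N| = p$, we get $|H| = |G|/|N| = p^{r-1}m$, so $p \mid |H|$ iff $r-1 \geq 1$. Under this hypothesis, $\varphi(\iota) = |H| \equiv 0 \pmod{p}$, so $\iota \in \ker \varphi$, and since $N = \langle \iota \rangle$ we conclude $N \subset \ker \varphi$.

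I do not expect any serious obstacle: the corollary is essentially a formal bookkeeping consequence of the homomorphism property and the already-computed value of $\varphi$ on the generator of $N$, together with an elementary $p$-adic valuation count.
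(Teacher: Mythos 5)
Your proof is correct and takes essentially the same route as the paper: both deduce the factorization from the homomorphism property of $\varphi$ (Proposition \ref{prop:phi:homom}) plus the observation that elements of order coprime to $p$ must map to $0$ in $\ZZ/p\ZZ$, and both obtain $N\subset \mathrm{Ker}(\varphi)$ from the recorded value $\varphi(\iota)=|H|$ reduced modulo $p$. Your explicit decomposition $G^{\mathrm{ab}}\cong (G^{\mathrm{ab}})_p\times (G^{\mathrm{ab}})_{p'}$ and the valuation count $|H|=p^{r-1}m$ simply spell out details the paper leaves implicit.
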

\begin{proof} $\varphi$ is a group homomorphism and $\ZZ/p\ZZ$ is abelian, so the morphism factors through $G^\mathrm{ab}$. Also, by virtue of $\varphi$ being a homomorphism, if $g$ has order coprime to $p$ then its image is $0$. 

For all $h\in H$ we have $\psi(\iota, H) = 1$ by definition. So $\varphi(\iota) = |H|$ hence $\varphi(N)= 0$ when $p||H|$.
\end{proof}

\begin{lemma}\label{lem:cyclicgroup:surj} If $G = G_p$ is cyclic,  then $\varphi$ is surjective.
\end{lemma}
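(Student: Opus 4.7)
The plan is to exploit the freedom in choosing coset representatives, guaranteed by Lemma \ref{lem:indep:reps}, and evaluate $\varphi$ directly on a convenient generator. Write $|G| = p^r$ and fix a generator $g_0$ of $G$. Since $G$ is cyclic, the unique subgroup of order $p$ is $\langle g_0^{p^{r-1}}\rangle$, so I may take $\iota = g_0^{p^{r-1}}$; the quotient $H = G/N$ is then cyclic of order $p^{r-1}$ generated by $\overline{g}_0$. As a system of representatives I choose $\widehat{\overline{g}_0^{\,i}} := g_0^i$ for $0 \leq i \leq p^{r-1}-1$.

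With this choice, the computation of $\varphi(g_0)$ telescopes: for each coset $h_i = \overline{g}_0^{\,i}$ with $i < p^{r-1}-1$, multiplication by $g_0$ sends the chosen representative $g_0^i$ to $g_0^{i+1}$, which is already the chosen representative of $\overline{g}_0 h_i$, so $\psi(g_0, h_i) = 0$. The only nontrivial contribution comes from the ``wraparound'' at $i = p^{r-1}-1$, where $g_0 \cdot g_0^{p^{r-1}-1} = g_0^{p^{r-1}} = \iota$, while the chosen representative of $\overline{g}_0^{\,p^{r-1}} = 1_H$ is $1$; this forces $\psi(g_0, h_{p^{r-1}-1}) = 1$. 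Summing over $H$ yields $\varphi(g_0) = 1 \in \ZZ/p\ZZ$, which generates the target. The degenerate case $r=1$ is already covered by the identity $\varphi(\iota) = |H| = 1$ noted in the text just before Lemma \ref{lem:indep:reps}.

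There is no real obstacle here; the only thing to get right is the bookkeeping in the choice of representatives, and the computation is designed to collapse in the cyclic case. Conceptually, this is a manifestation of the classical fact that for a cyclic $p$-group $G$ the transfer to a subgroup $N$ is the $[G:N]$-th power map, which in our setting sends the generator $g_0$ to $g_0^{p^{r-1}} = \iota$, a generator of $N$.
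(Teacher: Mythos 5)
Your proof is correct and is essentially the paper's own argument: both fix a generator $\varepsilon$ with $\varepsilon^{p^{r-1}}=\iota$ (your normalization of $g_0$, replacing it by a suitable power if needed, is the same implicit step), choose the representatives $\widehat{\varepsilon^i N}=\varepsilon^i$ for $0\leq i\leq p^{r-1}-1$, and observe that multiplication by the generator shifts representatives so that only the wraparound coset contributes $\psi=1$, giving $\varphi(\varepsilon)=1$ and hence surjectivity. Your closing remark about the transfer being the $[G:N]$-th power map on a cyclic group is a correct conceptual aside but plays no role in the argument.
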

\begin{proof} Write $G = \langle \varepsilon\rangle$, with $\varepsilon^{p^{r-1}} = \iota$. We make the choice $\widehat{\varepsilon^iN}= \varepsilon^j$ where $0\leq j\leq p^{r-1}-1$. Clearly $\varepsilon \widehat{\varepsilon^iN}= \widehat{\varepsilon^{i+1}N}$ if $0\leq i\leq p^{r-1}-1$, and $\varepsilon  \widehat{\varepsilon^{ p^{r-1}N}} = \iota $, so $\varphi(\varepsilon)=1$.
\end{proof}

\begin{lemma}\label{lem:iotaincyclic} Let $g\in G$. If  $\iota\in \langle g\rangle$ then $\varphi(g) = |\langle g\rangle\backslash G|$.
\end{lemma}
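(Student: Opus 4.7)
The plan is to compute $\varphi(g)$ by decomposing $H$ into orbits under the left translation action of $\langle\overline{g}\rangle$. Since $\iota\in\langle g\rangle$, we have $N\subseteq\langle g\rangle$, so the image $\overline{g}\in H$ has order $m:=[\langle g\rangle:N]$ and $g^m$ generates $N$. I first reduce to the case $g^m=\iota$ by adjusting the generator of $N$ (or, equivalently, replacing $g$ by a suitable power with the same cyclic span); by Proposition \ref{prop:phi:homom} this scales both sides of the claimed identity by the same unit in $\ZZ/p\ZZ$. The left translation action of $\langle\overline{g}\rangle$ on $H$ is free, so every orbit has size exactly $m$, and the number of orbits equals $|H|/m=[G:\langle g\rangle]=|\langle g\rangle\backslash G|$.

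The heart of the argument is then to evaluate the partial sum of $\psi(g,\cdot)$ along one orbit $\mathcal{O}=\{h_j:=\overline{g}^j h_0 : 0\leq j< m\}$. The defining relation $g\widehat{h_j}=\iota^{\psi(g,h_j)}\widehat{h_{j+1}}$ (indices modulo $m$) can be iterated $m$ times, sliding each newly produced power of $\iota$ past $g$ using the centrality of $N$, to obtain
\[g^m\widehat{h_0}=\iota^{\psi(g,h_0)+\psi(g,h_1)+\cdots+\psi(g,h_{m-1})}\,\widehat{h_0}.\]
Since $g^m=\iota$ and left multiplication by $\widehat{h_0}^{-1}$ is injective, this forces $\sum_{j=0}^{m-1}\psi(g,h_j)\equiv 1\pmod{p}$, so each orbit contributes exactly $1$ to the total sum $\varphi(g)$.

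Summing this contribution over the $|\langle g\rangle\backslash G|$ orbits gives $\varphi(g)=|\langle g\rangle\backslash G|$ in $\ZZ/p\ZZ$, as claimed. I expect no serious obstacle: the freeness of left translation on $H$ is immediate (as $\overline{g}^i h=h$ forces $\overline{g}^i=1$), and the passage of $\iota^{\psi(g,h_j)}$ past $g$ is exactly the centrality hypothesis on $N$. The only bookkeeping point deserving mention is the choice of generator of $N$: if one does not pre-normalize to $g^m=\iota$, the telescoping yields $\varphi(g)=e\cdot|\langle g\rangle\backslash G|$ with $g^m=\iota^e$, which is consistent with the stated identity under the natural convention used elsewhere and with the already-verified special case $\varphi(\iota)=|H|$ (where $m=1$, $e=1$).
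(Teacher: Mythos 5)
Your argument is correct and is essentially the paper's own proof: the paper likewise splits $H$ into $\langle\overline{g}\rangle$-orbits, observes that left multiplication by $g$ preserves the corresponding cosets, and invokes the telescoping computation of Lemma \ref{lem:cyclicgroup:surj} to conclude that each orbit contributes $1$ to $\varphi(g)$. Your closing bookkeeping remark is if anything more careful than the paper, whose statement and proof silently assume the normalization $g^m=\iota$ (note that replacing $g$ by a power scales $\varphi(g)$ but not $|\langle g\rangle\backslash G|$, so in general one only gets $\varphi(g)=e\cdot|\langle g\rangle\backslash G|$ with $g^m=\iota^e$, e.g.\ $\varphi(\iota^2)=2|H|$ for $p>2$); this unit factor is immaterial in every application, since only the vanishing of $\varphi(g)$ modulo $p$ is ever used.
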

\begin{proof}
Note that $\iota\in \langle g\rangle$ therefore $\langle g\rangle\backslash G\cong \langle\overline{g}\rangle\backslash H = \bigsqcup_{i} \langle\overline{g}\rangle h_i $. Consequently $\langle g\rangle\backslash G = \bigsqcup_{i}\langle{g}\rangle \hat{h_i}$. Left multiplication by $g$ preserves each right coset, so by the same computation as in Lemma \ref{lem:cyclicgroup:surj} we have $\sum_{h\in \langle\overline{g}\rangle h_i}\psi(g, h) = 1$, hence $\varphi(g) = |\langle\overline{g} \rangle\backslash H| = |\langle g \rangle\backslash G|$.
\end{proof}
\begin{corollary} If $G_p$ is cyclic, then $\varphi$ is surjective. 
\end{corollary}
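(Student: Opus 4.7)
The plan is to reduce to Lemma \ref{lem:iotaincyclic} by choosing a generator of the cyclic $p$-Sylow $G_p$ as our test element.

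First I would observe that $N$ is a normal $p$-subgroup of $G$ (in fact central, of order $p$), and every normal $p$-subgroup is contained in every Sylow $p$-subgroup. Hence $N \subset G_p$, and since $G_p$ is cyclic of order $p^r$, if $\varepsilon$ is a generator of $G_p$ then $\iota = \varepsilon^{p^{r-1}} \in \langle \varepsilon \rangle$.

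Next I would apply Lemma \ref{lem:iotaincyclic} to $g = \varepsilon$: since $\iota \in \langle \varepsilon \rangle$, we obtain
\[
\varphi(\varepsilon) \;=\; |\langle \varepsilon \rangle \backslash G| \;=\; [G : G_p] \pmod{p}.
\]
By the defining property of a Sylow subgroup, $[G : G_p]$ is coprime to $p$, so $\varphi(\varepsilon)$ is a nonzero element of $\mathbb{Z}/p\mathbb{Z}$. Since $\mathbb{Z}/p\mathbb{Z}$ is generated by any nonzero element, $\varphi$ is surjective.

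There is no real obstacle here: the statement follows essentially by combining Lemma \ref{lem:iotaincyclic} with the elementary fact that a Sylow subgroup has index prime to $p$. The only subtle point worth explicitly verifying is the inclusion $N \subset G_p$, which is what allows $\iota$ to belong to the cyclic group generated by a Sylow generator.
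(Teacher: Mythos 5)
Your proof is correct and follows essentially the same route as the paper: apply Lemma \ref{lem:iotaincyclic} to a generator of $G_p$ and note that $[G:G_p]$ is coprime to $p$. Your explicit verification that $N\subset G_p$ (so that $\iota\in\langle\varepsilon\rangle$) fills in a step the paper leaves implicit, which is a welcome addition rather than a deviation.
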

\begin{proof}
Let $g$ be a generator of $G_p$. By Lemma \ref{lem:iotaincyclic} $\varphi(g) = |\overline{G_p}\backslash H| = |G_p\backslash G|$, which is coprime to $p$, hence $\varphi$ is surjective.
\end{proof}

\begin{proposition} \label{prop:trans_cyclicness} $\varphi$ is the zero map if and only if $G_p$ is noncyclic.
\end{proposition}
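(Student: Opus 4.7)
The plan is to derive a closed-form expression for $\varphi(g)$ by decomposing $H$ into orbits of left multiplication by $\overline{g}$, and then to read off from the formula exactly when $\varphi(g)$ can be nonzero. The corollary following Lemma \ref{lem:iotaincyclic} already gives one direction of the equivalence (if $G_p$ is cyclic then $\varphi$ is even surjective), so the only content left to prove is that if $G_p$ is noncyclic then $\varphi \equiv 0$.

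Fix $g \in G$, write $m = \mathrm{ord}(\overline{g}) \in H$, and iterate the defining identity $g\hat{h}' = \iota^{\psi(g,h')}\widehat{\overline{g}h'}$ around an orbit $\{h, \overline{g}h, \dots, \overline{g}^{m-1}h\}$. Because $\iota$ is central we may pull all the $\iota$-factors to the left, so
\[g^m \hat{h} \;=\; \iota^{S_h}\,\widehat{\overline{g}^m h} \;=\; \iota^{S_h}\hat{h}, \qquad S_h := \sum_{i=0}^{m-1}\psi(g,\overline{g}^i h).\]
Right-multiplying by $\hat{h}^{-1}$ shows $g^m = \iota^{S_h}$ as an element of $G$, so $S_h$ depends only on $g$; writing $g^m = \iota^s$, we have $S_h \equiv s \pmod p$ for every orbit. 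Summing $\psi(g,\cdot)$ orbit by orbit over the $|H|/m$ orbits therefore gives
\[\varphi(g) \;\equiv\; \frac{|H|}{m}\cdot s \pmod p.\]

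From this formula the two cases are straightforward. If $\iota \notin \langle g\rangle$, then $g^m \in N$ must be trivial (otherwise $\iota$ would be a power of $g^m$), so $s \equiv 0$ and $\varphi(g) = 0$. If $\iota \in \langle g\rangle$, then $s$ is a unit mod $p$ and $|\langle g\rangle| = mp$, and the prefactor is $|H|/m = |G|/|\langle g\rangle|$, which is coprime to $p$ only when the $p$-part of $|\langle g\rangle|$ is $p^r = |G_p|$. This would force $\langle g\rangle$ to contain a cyclic Sylow $p$-subgroup of $G$ and hence $G_p$ itself to be cyclic, contrary to hypothesis. So when $G_p$ is noncyclic, $p$ divides $|H|/m$ and $\varphi(g) = 0$. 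Either way $\varphi(g) = 0$, and since $g$ was arbitrary this completes the proof.

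The main point requiring care is the orbit-sum step: one must verify that the partial products $S_h$ really do collapse to a single invariant $s$, independently of the orbit, and this rests on using the centrality of $\iota$ to commute the $\iota^{\psi(g,\overline{g}^i h)}$ past the $g$'s inside the otherwise nonabelian group $G$. Once the formula $\varphi(g) \equiv (|H|/m)\,s \pmod p$ is in place, the Sylow count that distinguishes cyclic from noncyclic $G_p$ is routine.
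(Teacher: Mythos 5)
Your proof is correct, and it reorganizes the paper's argument into a genuinely cleaner, unified computation. The paper proves the noncyclic direction by a case split: for $\iota\in\langle g\rangle$ it invokes Lemma \ref{lem:iotaincyclic}, and for $\iota\notin\langle g\rangle$ it uses the freedom granted by Lemma \ref{lem:indep:reps} to re-choose representatives along each $\langle\overline{g}\rangle$-orbit (setting $\widehat{\overline{g^i}h_j}=g^i\hat{h_j}$) so that $\psi(g,\cdot)$ vanishes identically there. You instead telescope the defining identity around each orbit with the representatives left untouched, which centrality of $\iota$ licenses, and obtain $g^m=\iota^{S_h}$ with $S_h=\sum_{i=0}^{m-1}\psi(g,\overline{g}^ih)$; since $g^m$ does not depend on the orbit, this collapses to the closed formula $\varphi(g)\equiv \frac{|H|}{m}\,s \pmod{p}$, where $g^m=\iota^s$, from which both subcases ($s=0$ when $\iota\notin\langle g\rangle$; $p$ dividing the index $|G|/|\langle g\rangle|$ when $\iota\in\langle g\rangle$, since a cyclic $\langle g\rangle$ with full $p$-part would force $G_p$ cyclic) follow routinely, exactly as you say. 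Your route buys three things: no case split in the orbit computation, no reliance on Lemma \ref{lem:indep:reps}, and as a bonus a sharpening of Lemma \ref{lem:iotaincyclic}: your formula gives $\varphi(g)=s\cdot|\langle g\rangle\backslash G|$ with a unit factor $s\in(\ZZ/p\ZZ)^\times$ that the lemma's statement elides. For instance, $G=\langle g\rangle$ of order $9$, $p=3$, $\iota=g^6$ yields $\varphi(g)=2$ rather than $|\langle g\rangle\backslash G|=1$; this is harmless wherever the lemma is used (only vanishing modulo $p$, i.e.\ the kernel of $\varphi$, ever matters, and surjectivity is unaffected by a unit), but your version is the exactly correct constant. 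One half-sentence worth adding: the orbits of $\langle\overline{g}\rangle$ acting on $H$ by left translation all have size exactly $m$ because the action is free, which is what justifies counting $|H|/m$ orbits in the final summation.
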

\begin{proof} We have already shown that $\varphi$ is surjective if $G_p$ is cyclic. Now assume that $G_p$ is not cyclic. Let $g\in G$. We want to show $\varphi(g) = 0$.

\begin{itemize}
	\item \emph{Case 1.} Assume $\iota\in \langle g\rangle$. By Lemma \ref{lem:iotaincyclic} we have $\varphi(g) = |\langle g\rangle\backslash G|$, which is still divisible by $p$ since $\langle g \rangle$ is cyclic and therefore cannot be a $p$-Sylow subgroup. So $\varphi(g) = 0$.
	\item \emph{Case 2.} Assume $\iota \notin \langle g \rangle$. This means that the sets $g^iN$ are all distinct sets for $0\leq i \leq \ell$ where $\ell$ is the order of $g$. Decompose $\langle \overline{g}\rangle H = \bigsqcup_{i}\langle \overline{g}\rangle h_i$.  By Lemma \ref{lem:indep:reps} we are free to pick representatives, so we pick $\widehat{\overline{g^i}h_j} = g^i\hat{h_j}$ for some representative $\hat{h_j}$ of $h_j$. Multiplication by $g$ preserves all the cosets, and \[g \widehat{\overline{g^i}h_j}= g g^i\hat{h_j} =g^{i+1}\hat{h_j} = \widehat{\overline{g^{i_1}}h_j}.\]
	Therefore $\psi(g, \langle \overline{g}\rangle h_i) = 0$ for all $h_i$ hence $\varphi(g) = 0$.
\end{itemize}
\end{proof}

\begin{corollary}\label{cor:cyclic:complement} Let $G$ be a finite group with central subgroup $N$ of order $p$. If $G_p$ is cyclic, then $G_p$ has a unique subgroup of order $|G_p \backslash G|$. In particular this subgroup is normal and $G_p$ is its complement. 
\end{corollary}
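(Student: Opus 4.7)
The plan is to argue by strong induction on $|G|$, using the transfer $\varphi:G\to N\cong\ZZ/p\ZZ$ to peel off one factor of $p$ at a time and build a normal $p$-complement. (I read the statement as producing a unique subgroup of $G$ of order $[G:G_p]$; such a subgroup necessarily has order coprime to $p$ and, if it exists, serves as a complement to $G_p$.) Since $G_p$ is cyclic, Proposition \ref{prop:trans_cyclicness} ensures $\varphi$ is surjective. Writing $G_p=\langle\varepsilon\rangle$ with $\iota=\varepsilon^{p^{r-1}}$, Lemma \ref{lem:iotaincyclic} gives $\varphi(\varepsilon)=[G:G_p]\bmod p$, which is nonzero, so $\varphi|_{G_p}$ is surjective as well and $K:=\ker\varphi$ meets $G_p$ in exactly $\langle\varepsilon^p\rangle$, the unique index-$p$ subgroup of $G_p$.

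For the base case $r=1$ I would observe $G_p=N$ and $K\cap G_p=\{1\}$, so $K$ itself is a normal subgroup of order $[G:G_p]$ with $G_p$ as complement. To get uniqueness I would argue that any subgroup $M'$ of $G$ of order $[G:G_p]$ has order coprime to $p$, so the composition $M'\hookrightarrow G\xrightarrow{\varphi}\ZZ/p\ZZ$ must be trivial, giving $M'\subset K$ and then $M'=K$ by orders.

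For the inductive step $r\geq 2$, I would use the corollary after Proposition \ref{prop:phi:homom} to see that $N\subset K$ and remains central of order $p$ there, while the $p$-Sylow of $K$ is $\langle\varepsilon^p\rangle$, cyclic of order $p^{r-1}$ and still containing $N$. Applying the inductive hypothesis to $K$ would produce a unique normal subgroup $M\subset K$ of order $[K:\langle\varepsilon^p\rangle]=[G:G_p]$, complementary to $\langle\varepsilon^p\rangle$. The same coprime-order argument from the base case would force any subgroup of $G$ of this order to lie in $K$, hence to coincide with $M$; this uniqueness would make $M$ stable under conjugation in $G$, giving normality. The complementarity $G_p\cap M\subset\langle\varepsilon\rangle\cap K\cap M=\langle\varepsilon^p\rangle\cap M=\{1\}$, combined with $|G_p M|=|G_p|\,|M|=|G|$, would finish the proof.

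The main obstacle I anticipate is the bookkeeping in the inductive step, specifically verifying that the hypotheses of the corollary genuinely transfer from $G$ to $K$: that $N$ survives as a central subgroup of order $p$ (which needs $r\geq 2$ so that $p\mid|H|$ and $\varphi(\iota)=0$), and that the Sylow of $K$ remains cyclic and still contains $N$. Once that descent is secured, the transfer map does all the real work, and no deeper structural input beyond the surjectivity of $\varphi$ is required.
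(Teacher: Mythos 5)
Your proof is correct and takes essentially the same route as the paper: both induct by passing to the kernel of the transfer map $\varphi$, which is surjective (indeed surjective already on the cyclic Sylow $G_p$ by Lemma \ref{lem:iotaincyclic}), peeling off one factor of $p$ at a time until the Sylow reduces to $N$. Your only deviations are refinements of the same argument: you bypass the paper's appeal to Schur--Zassenhaus in the base case (where the complement is just $N$ itself), and you spell out the uniqueness argument (any subgroup of order coprime to $p$ has trivial image under $\varphi$, hence lies in $\ker\varphi$) and the descent of the hypotheses to $\ker\varphi$ (that $N\subset\ker\varphi$ requires $r\geq 2$, and that $\langle\varepsilon^p\rangle$ is its cyclic Sylow containing $\iota$), points the paper's proof leaves implicit.
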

\begin{proof} The idea is to take the kernel of $\varphi$, and repeat the process to the kernel, and so on, until we get to a group whose $p$-Sylow is just $N$, and take its complement. 

We will proceed by induction on $r$. Recall that $|G_p| = p^r$. If $r=1$ then $G_p= N \cong G/\mathrm{Ker}(\varphi)$, and $\mathrm{Ker}(\varphi)$ has therefore a complement by Schur-Zassenhaus Theorem, so we can write $G = \mathrm{Ker}(\varphi)\rtimes N$.
 Assume now that  $G_p$ is cyclic and $r>1$. Let $m = |G_p \backslash G|$. We have a surjective group homomorphism $\varphi: G\rightarrow \mathbb{Z}/p\mathbb{Z}$, let $M$ denote its kernel. We know that $|M| = |G|/p = p^{r-1}m$, so we can use our induction hypothesis to conclude that $M$ contains a unique normal subgroup of size $m$, call it $C$. By uniqueness, $C$ is a characteristic subgroup of $K$ (stable under any automorphism), and $M$ is normal in $G$, therefore $C$ is normal in $G$. 
\end{proof}

\begin{rem}
The assumption that $N$ is central is necessary. For example, the dihedral group $D_{18}$ has a normal subgroup of order $3$, but no normal group of order $2$.
\end{rem}
\subsection{Application to explicit computation of the first cohomology groups}
In this section we apply the results of the previous section to compute $\hat{H}^1$ directly. In \S \ref{subsec:rest:cohom} we give a shorter, albeit more abstract proof. 

For $\lambda\in \ZZ[G]$, let $[\lambda]$ denote its image in the quotient $\La = \ZZ/L$, and $\{\lambda\}$ denote the $1$-cocycle defined by $\{\lambda\}_g = g[\lambda] - [\lambda]$. 
\begin{lemma}\label{lem:cobounds} For all $g\in G$ we have  $\sum_{\ell =0}^{p-1}\{\iota^\ell g\} =0$.
\end{lemma}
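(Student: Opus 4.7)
The plan is to exploit the linearity of the assignment $\lambda\mapsto \{\lambda\}$ combined with the centrality of $N$ to collapse the sum into a single cocycle attached to a norm-type element, and then to observe that this element already dies in $\La$.

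First, since $[\cdot]$ is $\ZZ$-linear from $\ZZ[G]$ onto $\La$, the cocycle construction $\{\lambda\}_h = h[\lambda]-[\lambda]$ depends linearly on $\lambda$. Therefore
\[\sum_{\ell=0}^{p-1}\{\iota^\ell g\} \;=\; \Bigl\{\sum_{\ell=0}^{p-1}\iota^\ell g\Bigr\}.\]
Using that $N=\langle\iota\rangle$ is central in $G$, I can commute $\iota^\ell$ past $g$ and rewrite the argument as $g\cdot N_\iota$, where $N_\iota := 1+\iota+\cdots+\iota^{p-1}$ is the norm element of $N$. So the lemma reduces to proving that the cocycle $\{gN_\iota\}$ is identically zero.

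To do this, I would verify that $[gN_\iota]$ is $G$-invariant in $\La$. By construction $L_1 = \ZZ[G]\cdot N_\iota$, so $gN_\iota \in L_1$, and the same holds for $hgN_\iota$ for every $h\in G$ since $L_1$ is a left ideal. Consequently $(h-1)gN_\iota \in L_1$, and its augmentation is $\varepsilon(h-1)\cdot \varepsilon(gN_\iota) = 0\cdot p = 0$, so $(h-1)gN_\iota \in L_1\cap J_G = L$. Hence $h[gN_\iota]=[gN_\iota]$ in $\La$ for every $h\in G$, which gives $\{gN_\iota\}_h=0$ and proves the lemma.

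The only real subtlety is to keep the two defining conditions of $L$ distinct: membership in the norm ideal $L_1$, and having augmentation zero (i.e.\ lying in $J_G$). The element $gN_\iota$ itself has augmentation $p$ and is thus not in $L$; but it is the difference $(h-1)gN_\iota$ that appears in the cocycle computation, and that difference automatically sits in $J_G$. The centrality of $N$ is the hypothesis that allows $\iota$ to pass through $g$ and repackage the sum as $gN_\iota$; without it this compact argument would not go through.
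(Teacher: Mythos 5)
Your proof is correct and follows essentially the same route as the paper's (one-line) argument: by linearity the sum is the cocycle of a norm-type element of $L_1$, and its coboundary values $(h-1)\lambda$ lie in $L_1\cap J_G = L$ because they are $N$-fixed and have augmentation zero, hence vanish in $\La$. Your extra step of commuting $\iota^\ell$ past $g$ is harmless but not strictly needed, since $N_\iota g$ already lies in $L_1$ as written, so centrality enters only in making $L_1$ stable under left multiplication (for which normality of $N$ would in fact suffice).
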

\begin{proof} Such a coboundary is immediately values in $L$ by definition of $L$ as sublattice of $\ZZ[G]^{N}$ of zero-sum vectors.
\end{proof}

For each $h\in H$ fix a choice of preimage $\hat{h}\in G$, and for the sake of convenience, we choose $\hat{1}=1$. 

Note that the cohomology of the exact sequence (\ref{ses:characters}) gives 
\[ 0=\hat{H}^1(G, \ZZ)\rightarrow \hat{H}^1(G, \La)\rightarrow \hat{H}^1(G, \La_1)=N.\]
The equality on the right was proved in Proposition \ref{prop:tau_tder}. Therefore, $\hat{H}^1(G, \La)$ embeds as a subgroup of $N\cong \ZZ/p\ZZ$, hence one only needs to find one nontrivial $1$-cocycle to determine $\hat{H}^1(G, \La) = N$.
\begin{theorem}\label{thm:numerator}
 Define the coboundary $b = \{\sum_{i=0}^{p-1}i\sum_{h\in H} \iota^i\hat{h}\}$. If $G_p$ is not cyclic then $b = p a$ where $a$ is a nontrivial $1$-cocycle, in particular $\hat{H}^1(G,\Lambda) = N$.
\end{theorem}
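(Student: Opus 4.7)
The plan is threefold: (i) show $b_g\in p\La$ by expanding $g\lambda - \lambda$ modulo $L$, (ii) extract $a_g := b_g/p$ and verify the cocycle property from torsion-freeness, and (iii) prove $a$ is not a coboundary. Parts (i) and (ii) are bookkeeping; (iii) is the main content.

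For (i), I would factor $\lambda = \xi\Sigma$ with $\xi = \sum_{i=0}^{p-1} i\iota^i$ and $\Sigma = \sum_{h\in H}\hat{h}$, and use the defining identity $g\hat{h} = \iota^{\psi(g,h)}\widehat{\overline{g}h}$ together with the reindexing $i\mapsto i+\psi(g,\overline{g}^{-1}h) \pmod{p}$ to obtain
\[g\lambda - \lambda \;\equiv\; -\varphi(g)\,\nu \;+\; p\cdot (\text{some element of }\ZZ[G]) \pmod{L},\]
where the simplification of the first sum uses the congruence $\nu\hat{h} \equiv \nu \pmod{L}$ (valid because $\nu(\hat{h} - 1)\in L_1\cap J_G = L$). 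Proposition \ref{prop:trans_cyclicness} then supplies the key fact that $\varphi(g)$ is divisible by $p$ as an integer whenever $G_p$ is noncyclic, so $b_g\in p\La$. For (ii), $a := b/p$ is well-defined in the torsion-free lattice $\La$, and the identity $p\,\delta a = \delta b = 0$ in the torsion-free group $C^2(G,\La)$ forces $\delta a = 0$.

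For (iii), the exact sequence (\ref{ses:characters}) combined with $\hat{H}^1(G,\ZZ) = 0$ and Proposition \ref{prop:tau_tder} gives an injection $\hat{H}^1(G,\La)\hookrightarrow \hat{H}^1(G,\La_1) = N\cong \ZZ/p\ZZ$, so I only need to exhibit one $g$ where $a_g$ is not a coboundary. Specializing to $g=\iota$, a short computation yields $\iota\xi - \xi = p - \nu$ in $\ZZ[G]$, hence $a_\iota \equiv \Sigma - \tfrac{|H|}{p}\nu \pmod{L}$ (using $p\mid |H|$, which follows from $|G_p|\geq p^2$). Suppose for contradiction that $a_\iota = \iota\mu - \mu$ in $\La$: lifting to $\tilde{\mu}\in\ZZ[G]$ gives an identity $\iota\tilde{\mu} - \tilde{\mu} = \Sigma - \tfrac{|H|}{p}\nu + \ell$ for some $\ell\in L\subset \ZZ[G]\nu = L_1$. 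Multiplying by the central element $\nu$ annihilates the left-hand side because $\nu(\iota - 1) = 0$, and $\nu^2 = p\nu$ together with $\ell\in \ZZ[G]\nu$ reduce the right-hand side modulo $p\ZZ[G]$ to $\nu\Sigma - |H|\nu = \sum_{g\in G} g - |H|\nu$. This would have to lie in $p\ZZ[G]$, but its coefficient at any $g\in G\setminus N$ is $1$ --- a contradiction (such $g$ exist since $|G|\geq p^2 > |N|$ under our hypothesis). Hence $a$ represents a nonzero class and $\hat{H}^1(G,\La) = N$.
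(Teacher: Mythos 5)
Your proof is correct, and while parts (i)--(ii) coincide in substance with the paper's argument, part (iii) takes a genuinely different and cleaner route. For (i)--(ii), the paper partitions $H$ into the sets $I_g^k=\{h: \psi(g,h)=k\}$ and computes $b_g\equiv -\bigl(\sum_{i=0}^{p-1}[\iota^i]\bigr)\sum_k k|I_g^k| \pmod{p\La}$, which is exactly your reindexed $-\varphi(g)\nu$ (with $\nu=1+\iota+\cdots+\iota^{p-1}$); both hinge on Proposition \ref{prop:trans_cyclicness} giving $p\mid \varphi(g)$, and your explicit torsion-freeness remarks for extracting $a=b/p$ and its cocycle property only make precise what the paper leaves implicit. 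For (iii), the paper computes $a_{\iota^\ell}$ for all $\ell$, expresses a putative coboundary in the generating set $\{\{\iota^\ell\hat{h}\}:\ell\neq 1\}$ of Lemma \ref{lem:cobounds}, and runs a delicate telescoping/coefficient-matching argument to rule it out; you instead test nontriviality only at $g=\iota$ (which suffices, since $a=\delta\mu$ would force $a_\iota=\iota\mu-\mu$) and kill the coboundary side structurally by multiplying the lifted identity by the central norm element $\nu$, using $\nu(\iota-1)=0$, $\nu^2=p\nu$, and $L\subset L_1=\ZZ[G]\nu$, whence $\nu\Sigma-|H|\nu=\sum_{g\in G}g-|H|\nu\in p\ZZ[G]$, contradicted by the coefficient $1$ at any $g\in G\setminus N$ (nonempty as $|G_p|\geq p^2$; the same inequality correctly justifies your use of $p\mid |H|$). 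Your norm-element trick buys brevity and robustness --- it is basis-free and avoids the paper's intricate bookkeeping over the generating coboundaries, which is the most fragile part of the published proof --- while the paper's computation buys explicit values $a_{\iota^\ell}$ of the cocycle on all of $N$, information your argument does not need and does not produce. Both proofs conclude identically via the embedding $\hat{H}^1(G,\La)\hookrightarrow \hat{H}^1(G,\La_1)=N$ from the sequence (\ref{ses:characters}) and Proposition \ref{prop:tau_tder}.
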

\begin{proof}
 Recall that for all $g\in G$ we have $g\widehat{h} = \iota^{\psi(g,h)}\widehat{\overline{g}h}$. For $g\in G$ and  $0\leq k\leq p-1$ we define $I_{g}^k = \{h\in H : \psi(g, h) = k\}$. It is clear that $\bigsqcup_{0\leq k\leq p-1}I_g^k = H$. In particular, we have that $p$ divides $\sum_{k=0}^{p-1}k |I_g^k| = \sum_{h\in H}\psi(g,h)$ for all $g\in G$. Also note that in $\Lambda$, for all $g\in G$ we have $[g\sum_{i=0}^{p-1}{\iota^i}] = [ \sum_{i=0}^{p-1}{\iota^i}]$.

We only want to know the result modulo $p$, so the following computation will be done in $\Lambda/p\Lambda$.
\begin{align*}
b_g &= g\left(\sum_{i=0}^{p-1}i\sum_{h\in H} [\iota^i\hat{h}]\right)- \sum_{i=0}^{p-1}i\sum_{h\in H} [\iota^i\hat{h}]\\
& =\sum_{i=0}^{p-1}i\sum_{k=0}^{p-1}\sum_{h\in I_g^k} [\iota^{i+k}\widehat{gh}]-\sum_{i=0}^{p-1}i\sum_{k=0}^{p-1}\sum_{h\in I_g^k} [\iota^{i}\widehat{gh}]\\
& =\sum_{k=0}^{p-1}\sum_{h\in I_g^k} \sum_{i=0}^{p-1}i[\iota^{i+k}\widehat{gh}]-\sum_{k=0}^{p-1}\sum_{h\in I_g^k}\sum_{i=0}^{p-1}i [\iota^{i}\widehat{gh}]\\
& =\sum_{k=0}^{p-1}\sum_{h\in I_g^k} \left(\sum_{i=0}^{p-1}i[\iota^{i+k}\widehat{gh}]- \sum_{i=0}^{p-1}i[\iota^{i}\widehat{gh}]\right).\\
\end{align*}
We are working modulo $p$ and $\iota$ has order $p$ so we have that $\sum_{i=0}^{p-1}i[\iota^{i+k}\widehat{gh}] = \sum_{i=0}^{p-1}(i-k)[\iota^{i-k}\widehat{gh}]$. Therefore,
\begin{align*}
b_g & =\sum_{k=0}^{p-1}\sum_{h\in I_g^k} \left(\sum_{i=0}^{p-1}(i-k)[\iota^{i}\widehat{gh}]- \sum_{i=0}^{p-1}i[\iota^{i}\widehat{gh}]\right)\\
& =\sum_{k=0}^{p-1}\sum_{h\in I_g^k} \sum_{i=0}^{p-1}-k[\iota^{i}\widehat{gh}]=\sum_{k=0}^{p-1}-k\sum_{h\in I_g^k} \sum_{i=0}^{p-1}[\iota^{i}\widehat{gh}]\\
&= \sum_{k=0}^{p-1}-k|I_g^k| \left(\sum_{i=0}^{p-1}[\iota^{i}]\right)=-\left(\sum_{i=0}^{p-1}[\iota^{i}]\right)\underset{=\varphi(g) = 0}{\underbrace{\sum_{k=0}^{p-1}k|I_g^k|}}=0.
\end{align*} 
We have proved that each coordinate of $b_g$ is a multiple of $p$ for all $g\in G$, hence $a = \frac{b}{p}$ is a $1$-cocycle of $G$ with coefficients in $\Lambda$. We are left to show that $a$ is not a coboundary. Since $\sum_{\ell=0}^{p-1}\{\iota^\ell\}=0$ by Lemma \ref{lem:cobounds}, we can generate all coboundaries with $\{\{\iota^\ell \hat{h}\} : \ell\in \ZZ/p\ZZ\ \ell\neq 1\ h\in H\}$. 

We now mimic the computation above to compute $b_{\iota^\ell}$.

\begin{align*}
b_{\iota^\ell} &= \sum_{i=0}^{p-1} \sum_{h\in H} i[\iota^{i+\ell}\hat{h}]-\sum_{i=0}^{p-1} \sum_{h\in H} i[\iota^{i}\hat{h}]=  \sum_{h\in H} \left(\sum_{i=0}^{p-1}i[\iota^{i+\ell}\hat{h}]-\sum_{i=0}^{p-1} i[\iota^{i}\hat{h}]\right)\\
 &=  \sum_{h\in H} \left(\sum_{i=\ell}^{p+\ell-1}(i-\ell)[\iota^{i}\hat{h}]-\sum_{i=0}^{p-1} i[\iota^{i}\hat{h}]\right)\\
 &=  \sum_{h\in H} \left(-\sum_{i=0}^{\ell-1}i[\iota^i\hat{h}] -\sum_{i=\ell}^{p-1}\ell[\iota^{i}\hat{h}]+\sum_{i=0}^{\ell-1} (i+p-\ell)[\iota^{i}\hat{h}]\right)\\
  &=-\sum_{h\in H} \left(\sum_{i=\ell}^{p-1}\ell[\iota^{i}\hat{h}]+\sum_{i=0}^{\ell-1} (\ell-p)[\iota^{i}\hat{h}]\right)= \sum_{h\in H} \left(\sum_{i=0}^{\ell-1} p[\iota^{i}\hat{h}]\right) - \ell|H|\sum_{i=0}^{p-1}[\iota^i] .\\
\end{align*}

Therefore, $a_{\iota^\ell} = \sum_{h\in H} \left(\sum_{i=0}^{\ell-1} [\iota^{i}\hat{h}]\right) - \ell\frac{|H|}{p}\sum_{i=0}^{p-1}[\iota^i]$. In particular, using that for all $h\in H$ we have $[\iota \hat{h}] = [\iota]+\sum_{i\neq 1}([\iota^i]-[\iota^i h]) = \sum_{i=0}^{p-1}[\iota^i] - \sum_{i\neq 1}[\iota^i\hat{h}]$. 
\[a_{\iota} = \sum_{h\in H} [\iota\hat{h}] - \ell\frac{|H|}{p}\sum_{i=0}^{p-1}[\iota^i]= \sum_{h\in H}\sum_{i\neq 0} [\iota^i\hat{h}] - |H|(\frac{\ell}{p}-1)\sum_{i=0}^{p-1}[\iota^i].\]
Fix $\hat{h}\neq 1$. If $2\leq \ell\leq p-1$ then $\{\iota^\ell \hat{h}\}_{\iota}= [\iota^{\ell+1}\hat{h}]-[\iota^\ell\hat{h}]$. Now for $\ell = 0$ we have \[\{\hat{h}\}_{\iota} = [\iota \hat{h}] - [\hat{h}] = \sum_{i=0}^{p-1}[\iota^i] - 2[\hat{h}] - \sum_{i = 2}^{p-1}[\iota^i \hat{h}].\]	
The $[\iota^\ell\hat{h}]$-coefficient of $a_\iota$ for $\ell\neq 1$ is always $1$. The goal is to now use that fact for each $h\in H$. Because of this, if $a$ is a sum of coboundaries, it must contains summands spanned by $\{\{\iota^\ell\hat{h}\} : \ell\neq 1\}$. However, each of those coboundaries evaluated at $\iota$ are of the form $\{\iota^{\ell+1}\hat{h} - \iota^\ell \hat{h}\}$, hence summing them will simplify by a telescopic argument, and there is only one possibility to get the desired described coefficients for $a_\iota$.

Assuming $a$ is a coboundary, then assume $k\{\hat{h}\}$ is a summand for some $k\in \mathbb{N}$. On order to have a coefficient $1$ at $[\hat{h}]$ the only possibility is to add $(2k+1)\{\iota^{p-1}\hat{h}\}$, but the evaluation of the latter at $\iota$ has $[\iota^{p-1}\hat{h}]$-coefficient $-(2k+1)$. Since that coefficient must also be $1$, and the only other generating coboundary having nonzero $[\iota^{p-1}\hat{h}]$-coordinate is $\{\iota^{p-2}\hat{h}\}$, it means that  $3k+2$ $\{\iota^{p-2}\hat{h}\}$ must also be a summand. We can repeat the process for each power of $\iota$, and we determine that for each $h$  the cocycle $a$ must have a summand of the form $\sum_{i=0}^{p-1}((i+1)k+i)\{\iota^{p-i}\hat{h}\}$. Using Lemma \ref{lem:cobounds}, we know that  $\sum_{i=0}^{p-1}\{\iota^i \hat{h}\} = 0$, hence this summand can be written as
\begin{align*}
\sum_{i=0}^{p-1}((i+1)k+i)\{\iota^{p-i}\hat{h}\} &=\sum_{i=0}^{p-1}((i+1)k+i)\{\iota^{p-i}\hat{h}\} -\sum_{i=0}^{p-1}\{\iota^i \hat{h}\} \\
&= \sum_{i=0}^{p-1}(ik+i)\{\iota^{p-i}\hat{h}\} = (k+1)\sum_{i=0}^{p-1}i\{\iota^{i}\hat{h}\} 
\end{align*}
However in the basis we picked for $\Lambda$, looking modulo $p$, we have that 
\begin{align*}
\sum_{i=0}^{p-1}i \{\iota^i \hat{h}\}_{\iota} &= \sum_{i=0}^{p-1} i [\iota^{p-i+1}\hat{h}] -\sum_{i=0}^{p-1} i [\iota^{p-i}\hat{h}] \text{ (mod $p$)}\\
&\equiv \sum_{i=0}^{p-1} (i+1) [\iota^{p-i}\hat{h}] -\sum_{i=0}^{p-1} i [\iota^{p-i}\hat{h}]\equiv \sum_{i=0}^{p-1} [\iota^{p-i}\hat{h}] \equiv \sum_{i=0}^{p-1}[\iota^i]\text{ (mod $p$)}\\
\end{align*}
Therefore, in our basis for $\Lambda$, modulo $p$, our coboundary $(k+1)\sum_{i=0}^{p-1}i\{\iota^{i}\hat{h}\}$ does not have any $[\iota^\ell \hat{h}]$-coordinate, for $\ell \neq 1$, hence $a$ cannot be a sum of those generating coboundaries, which yields $H^1(G,\Lambda)\neq 0$.
\end{proof}

\begin{lemma}\label{lemma:g:cyclic} If $G$ is cyclic, then for all $i$ we have $\hat{H}^i(G,\La_1) = \left\lbrace\begin{array}{l} 0 \text{ if }i\text{ is odd}\\ G/N \text{ if }i\text{ is even}\end{array}  \right.$.
\end{lemma}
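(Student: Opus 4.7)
The statement as written appears to contain a typo: Proposition \ref{prop:tau_tder} already determines $\hat{H}^i(G,\Lambda_1)$ for every $G$, via Shapiro's lemma and the identification $\Lambda_1 = \Ind_N^G(\ZZ[N]/\ZZ)$, giving $N$ in odd degrees and $0$ in even degrees independent of $G$. Since the values $G/N$ and $0$ in the claim match the Tate cohomology of $\Lambda = \X(\T)$, the lemma almost certainly intends $\hat{H}^i(G,\Lambda)$ rather than $\hat{H}^i(G,\Lambda_1)$, and the plan below proves that corrected version.

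The plan is to combine Proposition \ref{prop:tau_tder} with the short exact sequence \eqref{ses:characters}
\[ 0 \to \ZZ \to \Lambda \to \Lambda_1 \to 0 \]
and the standard cyclic-group values $\hat{H}^i(G,\ZZ) = \ZZ/pn\ZZ$ for $i$ even and $0$ for $i$ odd. Two-periodicity reduces everything to computing $\hat{H}^0$ and $\hat{H}^1$. The long exact sequence of Tate cohomology collapses in each parity to
\begin{align*}
0 &\to \hat{H}^1(G,\Lambda) \to N \xrightarrow{\delta} \ZZ/pn\ZZ, \\
N &\xrightarrow{\delta} \ZZ/pn\ZZ \to \hat{H}^0(G,\Lambda) \to 0,
\end{align*}
so the entire lemma reduces to showing that the connecting map $\delta$ is injective, with image the unique order-$p$ subgroup $n\ZZ/pn\ZZ$ of $\ZZ/pn\ZZ$.

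To compute $\delta$ I plan a direct Tate cochain chase. Write $G = \langle g\rangle$ with $\iota = g^n$. Using periodicity, replace $\hat{H}^1(G,\Lambda_1)$ by $\hat{H}^{-1}(G,\Lambda_1) = \ker N_G/(g-1)\Lambda_1$; since $\sigma = 1 + \iota + \cdots + \iota^{p-1}$ vanishes in $\Lambda_1$ and $N_G = (\sum_{j=0}^{n-1} g^j)\sigma$, the norm acts as zero on $\Lambda_1$, and the coinvariants are $\Lambda_1/(g-1)\Lambda_1 \cong \ZZ/p\ZZ$ with generator $[1]$. Lifting $[1]$ to $\tilde x = [1] \in \Lambda = \ZZ[G]/L$, one computes $N_G\tilde x = [N_G]$, which lies in $L_1/L$ and hence in the image of $\ZZ \hookrightarrow \Lambda$. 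Identifying $L_1/L$ with $\ZZ$ via augmentation (so that $[\sigma]$ corresponds to $1$), the element $[N_G] = [(\sum_{j=0}^{n-1} g^j)\sigma]$ corresponds to $n \in \ZZ$, representing the class $n \pmod{pn}$ in $\hat{H}^0(G,\ZZ) = \ZZ/pn\ZZ$; this generates the unique order-$p$ subgroup, so $\delta$ is injective.

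With $\delta$ injective, the collapsed exact sequences give $\hat{H}^1(G,\Lambda) = 0$ and $\hat{H}^0(G,\Lambda) \cong (\ZZ/pn\ZZ)/(n\ZZ/pn\ZZ) \cong \ZZ/n\ZZ \cong G/N$, and both parities follow by $2$-periodicity. The main technical hurdle is the identification of the embedding $\ZZ \hookrightarrow \Lambda$: one must verify that augmentation sends $L_1$ onto $p\ZZ$ and that $L = L_1 \cap J_G$, so that under $\ZZ \cong L_1/L$ the element $[N_G]$ corresponds to $n$ rather than some other multiple; once this identification is fixed, the rest is formal.
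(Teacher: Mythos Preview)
Your diagnosis of the typo is correct: the lemma is about $\Lambda$, not $\Lambda_1$, as the paper's own proof immediately makes clear by opening with ``Using the definition $\Lambda = \ZZ[G]/L$ we have $\hat{H}^i(G, \Lambda) = \hat{H}^{i+1}(G, L)$,'' and as is confirmed by the way Proposition~\ref{prop:cyclic:h1} invokes it.

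Your argument is sound, but it takes a genuinely different route from the paper. You work directly with the sequence $0\to\ZZ\to\Lambda\to\Lambda_1\to 0$ and reduce everything to the injectivity of the connecting map $\delta:\hat H^{-1}(G,\Lambda_1)\to\hat H^0(G,\ZZ)$, which you then establish by an explicit lift-and-norm computation identifying $\delta([1])$ with $n\in\ZZ/pn\ZZ$. The paper instead dimension-shifts once via $0\to L\to\ZZ[G]\to\Lambda\to 0$, recognises $L$ as the augmentation ideal of $\ZZ[G/N]$, and then runs the long exact sequence of $0\to L\to\ZZ[G/N]\to\ZZ\to 0$ together with Shapiro. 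The payoff of the paper's detour is that injectivity of the map $\hat H^0(N,\ZZ)\to\hat H^0(G,\ZZ)$ comes for free from exactness, since $L$, being an augmentation ideal, has no nonzero $G$-fixed points and hence $\hat H^0(G,L)=0$; no connecting map needs to be chased. Your approach, by contrast, stays closer to the objects already on the table ($\Lambda$ and $\Lambda_1$) and is more hands-on, at the cost of the bookkeeping you flag at the end: pinning down the identification $L_1/L\cong\ZZ$ so that $[N_G]$ really corresponds to $n$. That step is correct (augmentation sends $L_1=\ZZ[G]\sigma$ onto $p\ZZ$, so the isomorphism is $a\sigma\mapsto\epsilon(a)$, under which $N_G=\bigl(\sum_{h}\hat h\bigr)\sigma\mapsto n$), but it is exactly the kind of normalisation check the paper's argument sidesteps.
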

\begin{proof} Since $G$ is cyclic, its cohomology is $2$-periodic hence we only need to solve it for $i=0,1$.
Using the definition $\Lambda = \ZZ[G]/L$ we have $\hat{H}^i(G, \Lambda) = \hat{H}^{i+1}(G, L)$. The action of $G$ on $L$ factors through $G/N$, and $L$ is isomorphic to the augmentation ideal of $\ZZ[G/N]$, so we have the short exact sequence 
\[0\rightarrow L \rightarrow \mathrm{Ind}_N^G(\mathbb{Z})=\ZZ[G/N]\rightarrow \ZZ \rightarrow 0.\]
Since it is an augmentation ideal, $L$ has no $G$-fixed point, so $\hat{H}^0(G, L)=0$. Taking the cohomology of the sequence above, using Shapiro Lemma, and Hilbert 90, we get 
\[ 0\rightarrow \underset{= N}{\underbrace{\hat{H}^0(N, \mathbb{Z})}}\rightarrow \underset{=G}{\underbrace{\hat{H}^0(G, \mathbb{Z})}}\rightarrow \hat{H}^1(G, L)\rightarrow \underset{=0}{\underbrace{\hat{H}^1(N, \mathbb{Z})}}.\] 
We used the cyclicity of $G$ to write $\hat{H}^0(G, \mathbb{Z}) = \ZZ/|G|\ZZ = G$. This finishes the proof.
\end{proof}
\begin{proposition}\label{prop:cyclic:h1}
If $G_p$ is cyclic, then $\hat{H}^1(G, \La) = 0$.
\end{proposition}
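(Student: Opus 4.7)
The plan is to reduce the computation from $G$ to its $p$-Sylow subgroup $G_p$ (which is cyclic by hypothesis) and then invoke Lemma \ref{lemma:g:cyclic}, bypassing the explicit cocycle bookkeeping used in Theorem \ref{thm:numerator}.

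First, recall from the discussion just before Theorem \ref{thm:numerator} that the long exact sequence attached to (\ref{ses:characters}), together with $\hat{H}^1(G,\ZZ)=0$, produces an injection $\hat{H}^1(G,\La)\hookrightarrow \hat{H}^1(G,\La_1) \cong N \cong \ZZ/p\ZZ$. In particular, $\hat{H}^1(G,\La)$ is $p$-primary, so the standard restriction-corestriction identity (the composition $\mathrm{cor}\circ \mathrm{res}$ equals multiplication by $[G:G_p]$, which is coprime to $p$) forces the restriction map $\hat{H}^1(G,\La)\to \hat{H}^1(G_p,\La|_{G_p})$ to be injective. It therefore suffices to prove $\hat{H}^1(G_p,\La|_{G_p})=0$.

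Applying the same long-exact-sequence argument on $G_p$, and using $\hat{H}^1(G_p,\ZZ)=\mathrm{Hom}(G_p,\ZZ)=0$, the problem reduces further to showing $\hat{H}^1(G_p,\La_1|_{G_p})=0$. This is where centrality of $N$ enters crucially. Since $N$ is a central $p$-subgroup, it is contained in every $p$-Sylow, in particular $N\subset G_p$. Choosing right coset representatives $G = \bigsqcup_i G_p g_i$ and exploiting that $N_N = 1+\iota+\cdots+\iota^{p-1}$ commutes with every element of $G$, one gets a $G_p$-module decomposition
\[L_1|_{G_p}\;=\;\bigoplus_i \ZZ[G_p]\,N_N\, g_i\;\cong\; \bigl(L_1^{G_p}\bigr)^{[G:G_p]},\]
where $L_1^{G_p} = \ZZ[G_p]\cdot N_N$ is the $G_p$-internal analogue of $L_1$. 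Passing to quotients gives $\La_1|_{G_p}\cong (\La_1^{G_p})^{[G:G_p]}$, so cohomology splits as a direct sum.

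Finally, Lemma \ref{lemma:g:cyclic} applied to the cyclic group $G_p$ (which still contains the central subgroup $N$ of order $p$) yields $\hat{H}^1(G_p,\La_1^{G_p})=0$. Summing gives $\hat{H}^1(G_p,\La_1|_{G_p})=0$, and unwinding the injections above produces $\hat{H}^1(G,\La)=0$. The only slightly delicate step is the $G_p$-module decomposition of $L_1|_{G_p}$, which genuinely uses both $N\subset G_p$ and the centrality of $N$; once coset representatives are fixed the verification is a direct computation, so I expect no real obstacle.
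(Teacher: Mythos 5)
The proposal has a fatal gap at its final step, and in fact the restrict-to-$G_p$ strategy cannot be repaired. Your coset decomposition is fine (it is Lemma \ref{lem:ld:mod} with $S=G_p$, using $N\subset G_p$ and centrality of $1+\iota+\cdots+\iota^{p-1}$, giving $\La_1|_{G_p}\cong\bigl(\ZZ[G_p]/\ZZ[G_p](1+\iota+\cdots+\iota^{p-1})\bigr)^{[G:G_p]}$), but the vanishing you then extract from Lemma \ref{lemma:g:cyclic} is false. Despite the ``$\La_1$'' in that lemma's statement (a typo: its proof explicitly sets $\Lambda=\ZZ[G]/L$ and computes the cohomology of $\La$, not of $\La_1$), the genuine $G_p$-analogue of $\La_1$ is $\Ind_N^{G_p}(\ZZ[N]/\ZZ)$, and Shapiro's Lemma together with the computation in Proposition \ref{prop:tau_tder} gives $\hat{H}^1(G_p,\Ind_N^{G_p}(\ZZ[N]/\ZZ))=\hat{H}^2(N,\ZZ)=N\neq 0$ --- note that Proposition \ref{prop:tau_tder} holds for \emph{every} $G$, cyclic ones included, so the odd cohomology of a $\La_1$-type lattice over its own group is never zero. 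Consequently $\hat{H}^1(G_p,\La_1|_{G_p})=N^{[G:G_p]}$, and your chain of injections only yields $\hat{H}^1(G,\La)\hookrightarrow N^{[G:G_p]}$, which is weaker than the embedding $\hat{H}^1(G,\La)\subset N$ you started from.

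The deeper obstruction is that your intermediate goal $\hat{H}^1(G_p,\La|_{G_p})=0$ is itself false whenever $G\neq G_p$: by Theorem \ref{thm:h1h2} (case $\iota\in S$ with $S_p$ cyclic, applied to $S=G_p$) one has $\hat{H}^1(G_p,\La)=N^{[G:G_p]-1}$. A minimal check: $G=\ZZ/6\ZZ$, $p=2$, $G_p=N$; here $\iota$ acts trivially on $L$, so $L|_{G_p}\cong\ZZ^2$ with trivial action and $\hat{H}^1(G_p,\La)\cong\hat{H}^2(G_p,L)=\hat{H}^0(G_p,L)=(\ZZ/2\ZZ)^2$, while $\hat{H}^1(G,\La)=0$. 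So restriction to $G_p$ is indeed injective (your restriction-corestriction step is correct), but its target does not vanish, and no argument of this shape can close the proof. This is exactly why the paper does \emph{not} restrict to $G_p$: it uses Corollary \ref{cor:cyclic:complement} to produce a normal complement $M$ of $G_p$ and then inflation-restriction to identify $\hat{H}^1(G,\La)$ with $\hat{H}^1(G_p,\La^{M})$. Taking $M$-invariants, rather than merely restricting, collapses the $[G:G_p]$ coset copies, so that $\La^{M}$ agrees --- up to $|M|$-torsion discrepancies coming from $H^1(M,L)$, which die in $G_p$-cohomology --- with the internal lattice $\Lambda_p=\ZZ[G_p]/(L\cap\ZZ[G_p])$; it is to $\Lambda_p$, and neither to $\La|_{G_p}$ nor to the $\La_1$-analogue, that the cyclic-group computation of Lemma \ref{lemma:g:cyclic} legitimately applies.
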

\begin{proof}
Assume $G_p$ is cyclic. Using Corollary \ref{cor:cyclic:complement}, we can write $G = M\rtimes G_p$ for some subgroup $M$ of order coprime to $p$. The corresponding inflation-restriction exact sequence is:
\[0\rightarrow H^1(G_p, \Lambda^{M}) \rightarrow H^1(G,\Lambda) \rightarrow H^1(M, \Lambda)^{G_p} \rightarrow H^2(G_p,\Lambda^{M}) \rightarrow H^2(G,\Lambda).\]  
Again, $H^1(G, \Lambda)\subset N$ so in particular, it is $p$-torsion, but $H^1(M, \Lambda)$ is $|M|$-torsion, hence the map $H^1(G, \Lambda)\rightarrow H^1(M, \Lambda)^{G_p}$ is trivial. Therefore, $H^1(G_p, \Lambda^{M})= H^1(G,\Lambda)$.

Look at the short exact sequence of $M$-modules
\[0\rightarrow L \rightarrow \ZZ[G] \rightarrow \Lambda\rightarrow 0.\]
Taking the group cohomology, we get 
\[0\rightarrow L^M \rightarrow \ZZ[G/M]= \ZZ[G_p] \rightarrow \Lambda^M\rightarrow H^1(M, L)\rightarrow 0,\]
which gives us the short exact sequence 
\begin{equation}0\rightarrow \ZZ[G_p]/L\cap \ZZ[G_p] \rightarrow \Lambda^M\rightarrow H^1(M, L)\rightarrow 0.\label{eq:kfixed}\end{equation}
Importantly, since $M$ is normal in $G$, every term in this sequence has a $G/M = G_p$-module structure. Notably, fixed elements of $H^i(M, L)$ by $G_p$ corresponds to the image of the restriction map $H^i(G, L)\rightarrow H^i(M, L).$

Now, let $\Lambda_p = \ZZ[G_p]/L\cap \ZZ[G_p]$. This is the same construction as for one $\Lambda$, but replacing $G$ by $G_p$: in particular we can use \ref{lemma:g:cyclic} to compute its cohomology. Taking the Tate-cohomology of (\ref{eq:kfixed}), we get 
\[\hat{H}^{i-1}(G_p, H^1(M, L))\rightarrow \hat{H}^i(G_p, \Lambda_p)\rightarrow \hat{H}^i(G_p, \Lambda^M)\rightarrow \hat{H}^{i}(G_p, H^1(M, L)).\]
We have that $H^1(M, L)$ is $|M|$-torsion, and $|M|$ is coprime to $p$, hence for all $i$ we have $\hat{H}^{i}(G_p, H^1(M, L))=0$. Consequently, $\hat{H}^i(G_p, \Lambda_p)=\hat{H}^i(G_p, \Lambda^M)$, and using Lemma \ref{lemma:g:cyclic} we obtain $\hat{H}^i(G_p, \Lambda^M) = 0$, as desired.
\end{proof}

\begin{rem} Note that in the proof above we do not necessarily have $\Lambda^M = \ZZ[G]^M / L^M$. Indeed, the group $H^1(M, L)$ might not be trivial. For example, when $G = \ZZ/24\ZZ$, we have $M = \ZZ/3\ZZ$ and $H^1(M, L) = M$. 
\end{rem}

\begin{rem}
The proof of Proposition \ref{prop:cyclic:h1} shows more generally that whenever $G$ is $p$-nilpotent, then $\hat{H}^1(G, \Lambda) = \hat{H}^1(G_p, \Lambda_p)$ (with the notations of the proof).
\end{rem}
\section{Computing the rest of the Cohomology groups}\label{subsec:rest:cohom}
We now give a more abstract application of the transfer map to compute all cohomology groups. The following commutative diagram is just a reformulation of the short exact sequences (\ref{ses:characters}) and (\ref{ses:characters_dual}).

\begin{figure}[H]
\centering
\begin{tikzcd}
& & 0\arrow[d]& 0\arrow[d]& & \\
& \displaystyle 0 \arrow[r] &\displaystyle  L \arrow[r]\arrow[d]& \displaystyle L_1\arrow[r]\arrow[d]& \displaystyle \ZZ\arrow[r]&  \displaystyle 0\\
&  \displaystyle 0 \arrow[r] &\displaystyle  \ZZ[G]\arrow[r]\arrow[d]& \displaystyle \ZZ[G]\arrow[d]\arrow[r]& \displaystyle 0& &\\
\displaystyle 0\arrow[r] &  \displaystyle \ZZ \arrow[r] &\displaystyle  \Lambda \arrow[d]\arrow[r]& \displaystyle \La_1\arrow[r]\arrow[d]& \displaystyle 0& &\\
& & 0& 0& & \\
\end{tikzcd}
\end{figure}
This diagram commutes and has both exact rows and columns. Note that an element of $\ZZ$ in the top right can be lifted to an element of $L_1$, which imbeds into $\ZZ[G]$. It can be sent to the left copy of $\ZZ[G]$ and into the quotient $\Lambda$, this element is however trivial in the quotient $\La_1$ so by exactness of the sequence below, it belongs to the image of the injection $\ZZ\rightarrow \Lambda$. This gives us a map $\ZZ\rightarrow \ZZ$ which is just the identity. 


Let $S$ be a subgroup of $G$. We want to compute $\hat{H}^i(S, \Lambda)$. Since $\ZZ[G]$ is cohomologically trivial, as direct sum of induced $S$-modules, we get the following exact sequence on cohomology.

\adjustbox{scale = 0.7, center}{
\begin{tikzcd}[column sep=small]
\displaystyle\hat{H}^{i-1}(S, \La_1)\ar[equal]{d}\arrow[r]&\displaystyle\hat{H}^i(S, \ZZ)\ar[equal]{d}\arrow[r]&\displaystyle\hat{H}^i(S, \Lambda)\ar[equal]{d}\arrow[r]&\displaystyle\hat{H}^i(S, \La_1)\ar[equal]{d}\arrow[r]&\displaystyle\hat{H}^{i+1}(S, \ZZ)\ar[equal]{d}\arrow[r]&\displaystyle\hat{H}^{i+1}(S, \Lambda)\ar[equal]{d}\arrow[r]&\displaystyle\hat{H}^{i+1}(S, \La_1)\ar[equal]{d}\\
\displaystyle\hat{H}^{i}(S, L_1)\arrow[r]&\displaystyle\hat{H}^i(S, \ZZ)\arrow[r]&\displaystyle\hat{H}^{i+1}(S, L)\arrow[r]&\displaystyle\hat{H}^{i+1}(S, L_1)\arrow[r]&\displaystyle\hat{H}^{i+1}(S, \ZZ)\arrow[r]&\displaystyle\hat{H}^{i+2}(S, L)\arrow[r]&\displaystyle\hat{H}^{i+2}(S, L_1)\\
\end{tikzcd}}

Note that we will not prove the commutativity of this diagram because it is not needed. Each vertical arrow is an isomorphism, so we will compute each element in the bottow row and this will give us the cohomology groups on the top row.

\begin{lemma}\label{lem:ld:mod}
For any subgroup $S\leq G$ we have that $L_1\cong \mathrm{Ind}_{N}^{NS}(\ZZ)^{[G:NS]}$ as an $S$-module.
\end{lemma}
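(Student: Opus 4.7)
I want to decompose $L_1$ as an $S$-module. The plan is to pin down an explicit $\ZZ$-basis of $L_1$, use the (left) $S$-action on that basis to identify the orbits and stabilizers, and then repackage each orbit as an induced module in the form the statement asks for.

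First, recall that $L_1 = \{a \in \ZZ[G] : \iota a = a\} = \ZZ[G] \cdot N_N$, where $N_N = 1+\iota+\cdots+\iota^{p-1}$ is the norm element of $N$. Writing elements of $\ZZ[G]$ as $\ZZ$-linear combinations of coset representatives for $G/N$, one sees that $L_1$ is a free $\ZZ$-module with basis $\mathcal{B} = \{g N_N : gN \in G/N\}$, of rank $[G:N] = n$. Since $N$ is central, left multiplication by any $s \in S$ fixes the subspace $L_1$ and permutes $\mathcal{B}$: indeed $s \cdot (gN_N) = (sg)N_N$, and the element $(sg)N_N$ depends only on the coset $sgN$.

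Second, I will analyse the $S$-action on $\mathcal{B}$ as a permutation action. Two basis vectors $g_1 N_N$ and $g_2 N_N$ lie in the same $S$-orbit iff $g_2 \in S g_1 N$, so the orbit set is the double coset space $S \backslash G / N$. Because $N$ is normal (indeed central), $SgN = (SN)g = (NS)g$, hence $S \backslash G / N = (NS) \backslash G$, which has cardinality $[G : NS]$. Next, the stabilizer in $S$ of the basis vector $gN_N$ consists of $s \in S$ with $sgN = gN$, i.e.\ $g^{-1}sg \in N$; normality of $N$ reduces this to $s \in N$, so the stabilizer is $S \cap N$, independent of $g$. Consequently each orbit is $S$-equivariantly a copy of $S/(S \cap N)$.

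Third, the corresponding $\ZZ$-linear span of each orbit is $\ZZ[S/(S\cap N)] = \Ind_{S \cap N}^{S}(\ZZ)$, and $L_1$ decomposes as an $S$-module into the direct sum of these $[G:NS]$ summands. To match the form in the statement, I invoke the standard compatibility of induction with the second isomorphism theorem: the inclusion $S \hookrightarrow NS$ induces an $S$-equivariant isomorphism $S/(S \cap N) \xrightarrow{\sim} NS/N$, and hence an isomorphism of $S$-modules
\[
  \Ind_{S \cap N}^{S}(\ZZ) \;\cong\; \Ind_{N}^{NS}(\ZZ)\big|_{S},
\]
(a Mackey-style computation, or directly from $NS/N \cong S/(S\cap N)$). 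Putting these ingredients together yields $L_1 \cong \Ind_{N}^{NS}(\ZZ)^{[G:NS]}$ as $S$-modules.

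The verification is essentially bookkeeping; the only step that is non-automatic is the last identification, and centrality (or at least normality) of $N$ is what makes both the double coset count and the stabilizer computation collapse to the clean answer. I expect this to be the only place where a reader might want to pause.
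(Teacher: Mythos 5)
Your proof is correct and in substance the same as the paper's: both arguments decompose $L_1$, spanned by the norm elements $g(1+\iota+\cdots+\iota^{p-1})$ for $gN\in G/N$, into $[G:NS]$ blocks indexed by $(NS)\backslash G$, each block being the permutation module on the cosets of $N$ in $NS$. The only cosmetic difference is that you run the orbit--stabilizer analysis directly for the $S$-action (obtaining $\Ind_{S\cap N}^{S}(\ZZ)$ and then converting via the $S$-equivariant bijection $S/(S\cap N)\cong NS/N$), whereas the paper first decomposes $\ZZ[G]$ into right-coset blocks $\ZZ[S'g_i]\cong\ZZ[S']$ for $S'=NS$ and takes $\iota$-fixed points blockwise to get $\ZZ[S'/N]=\Ind_{N}^{S'}(\ZZ)$ in each block.
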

\begin{proof}
Let $S' = NS$ and decompose $G$ into the right cosets $G = \sqcup_{i}S'g_i$ and so $\ZZ[G]$ is the $\ZZ$-span of $\ZZ[S'g_i]$, each of them being an $S'$-module. Each summand is isomorphic to $\ZZ[S']$ , hence 
\[L_1 \cong \{ a\in \ZZ[S'] : \iota a=a\}^{[G:S']}\cong \ZZ[S'/N]^{[G:S']}\cong \mathrm{Ind}_{N}^{S'}(\mathbb{Z})^{[G:S']}.\] 

\end{proof}
\begin{corollary}\label{cor:cohom:ld} Let $S\leq G$ be a subgroup. For all $i\in \ZZ$ we have 
\[\hat{H}^{2i}(S, L_1) = \left\lbrace\begin{array}{l}N^{[G:S]}\text{ if }\iota\in S\\ 0\text{ else} \end{array}\right. ,\text{ and}\  \hat{H}^{2i+1}(S, L_1) = 0 .\]
\end{corollary}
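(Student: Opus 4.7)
The plan is to combine Lemma \ref{lem:ld:mod} with Shapiro's lemma, splitting into two cases according to whether $\iota \in S$ or not. In both cases, the key point is that $N$ has prime order $p$, so $N \cap S$ is either trivial or all of $N$, which dictates the structure of $NS$ as an extension of $S$.

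First I would apply Lemma \ref{lem:ld:mod} to reduce the computation to understanding $\hat{H}^*(S, \mathrm{Ind}_N^{NS}(\mathbb{Z}))$. If $\iota \in S$, then $N \subseteq S$ and $NS = S$, so $L_1 \cong \mathrm{Ind}_N^S(\mathbb{Z})^{[G:S]}$ as an $S$-module. Shapiro's lemma then gives $\hat{H}^i(S, \mathrm{Ind}_N^S(\mathbb{Z})) = \hat{H}^i(N, \mathbb{Z})$, and since $N$ is cyclic of order $p$ its Tate cohomology is $2$-periodic with $\hat{H}^{2i}(N,\mathbb{Z}) = N$ and $\hat{H}^{2i+1}(N,\mathbb{Z}) = 0$. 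Taking the $[G:S]$-fold direct sum gives exactly the claimed answer.

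If instead $\iota \notin S$, then $N \cap S = \{1\}$ (as $N$ has prime order) and, since $N$ is central in $G$, $NS \cong N \times S$. Therefore $\mathrm{Ind}_N^{NS}(\mathbb{Z}) = \mathbb{Z}[NS/N]$ becomes, as an $S$-module via restriction, isomorphic to $\mathbb{Z}[S]$ with the regular left $S$-action. But $\mathbb{Z}[S]$ is a free (hence cohomologically trivial) $\mathbb{Z}[S]$-module, so all its Tate cohomology vanishes, and the same holds for the $[G:NS]$-fold direct sum $L_1$.

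I don't expect any real obstacle here: the only thing worth being a little careful about is identifying the $S$-action on $\mathrm{Ind}_N^{NS}(\mathbb{Z})$ in the case $\iota \notin S$, where centrality of $N$ in $G$ makes $NS$ split as a direct product and forces the induced module to restrict to the regular representation of $S$. Everything else is a direct application of Shapiro's lemma and of the known cohomology of a cyclic group with trivial coefficients.
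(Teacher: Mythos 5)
Your proposal is correct and follows essentially the same route as the paper: both apply Lemma \ref{lem:ld:mod}, then in the case $\iota\in S$ use Shapiro's lemma together with the $2$-periodic cohomology of the cyclic group $N$, and in the case $\iota\notin S$ identify the restriction of $\mathrm{Ind}_N^{NS}(\ZZ)$ to $S$ with the regular module $\ZZ[S]=\mathrm{Ind}_1^S(\ZZ)$, which is cohomologically trivial. Your extra care in spelling out why $NS\cong N\times S$ when $\iota\notin S$ (via centrality of $N$ and $|N|=p$) is exactly the implicit content of the paper's one-line identification $\mathrm{Ind}_N^{SN}(\ZZ)^{[G:SN]}=\mathrm{Ind}_1^S(\ZZ)^{[G:S]/p}$.
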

\begin{proof}
If $\iota\notin S$ then $L_1 \cong \mathrm{Ind}_{N}^{SN}(\ZZ)^{[G:SN]}=\mathrm{Ind}_{1}^{S}(\ZZ)^{\frac{[G:S]}{p}}$, hence is induced. If $\iota\in S$ then $\hat{H}^i(S, L_1) = \hat{H}^i(N, \ZZ)^{[G:S]}$. Since $N$ is cyclic, we only need to compute $\hat{H}^0(N, \ZZ)= N$ and $\hat{H}^1(N, \ZZ) = 0$.
\end{proof}

Plug this in the second row of the previous diagram. If we pick $i$ odd then we have
\begin{equation}\label{eq:cohom:L}0 \rightarrow \hat{H}^i(S, \ZZ)\rightarrow \hat{H}^{i+1}(S, L)\rightarrow \hat{H}^{i+1}(S, L_1)\overset{\varphi_i}{\rightarrow} \hat{H}^{i+1}(S, \ZZ)\rightarrow \hat{H}^{i+2}(S, L)\rightarrow 0.\end{equation}
\begin{corollary}\label{cor:iotanotins} Let $S\leq G$ be a subgroup such that $\iota\notin S$. Then for all $i\in \ZZ$ we have $\hat{H}^{i+1}(S, L)=\hat{H}^{i}(S, \ZZ)$. Consequently $\hat{H}^i(S, \Lambda) = \hat{H}^i(S, \ZZ)$.
\end{corollary}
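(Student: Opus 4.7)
The plan is to combine the two short exact sequences (\ref{ses:characters}) and (\ref{ses:characters_dual}) with the vanishing input provided by Corollary \ref{cor:cohom:ld}. The strategy is a double dimension shift: one shift over the sequence $0 \to L \to L_1 \to \ZZ \to 0$ to match $\hat{H}^{i+1}(S, L)$ with $\hat{H}^i(S, \ZZ)$, and another over $0 \to L \to \ZZ[G] \to \Lambda \to 0$ to match $\hat{H}^{i+1}(S, L)$ with $\hat{H}^i(S, \Lambda)$.

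First step: apply Corollary \ref{cor:cohom:ld} with the hypothesis $\iota \notin S$. Both the even-degree formula (which requires $\iota \in S$) and the odd-degree formula give zero, so $\hat{H}^j(S, L_1) = 0$ for every $j \in \ZZ$. Second step: take the long exact Tate cohomology sequence of $S$ attached to (\ref{ses:characters_dual}). Two consecutive terms $\hat{H}^i(S, L_1)$ and $\hat{H}^{i+1}(S, L_1)$ vanish, so the connecting map
\[
\hat{H}^i(S, \ZZ) \xrightarrow{\sim} \hat{H}^{i+1}(S, L)
\]
is an isomorphism for all $i$. This proves the first equality.

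Third step: identify $\hat{H}^{i+1}(S, L)$ with $\hat{H}^i(S, \Lambda)$ by dimension shift on the sequence $0 \to L \to \ZZ[G] \to \Lambda \to 0$. The point here is that $\ZZ[G]$ is cohomologically trivial for $S$ as well as for $G$: decomposing $G$ into right $S$-cosets $G = \sqcup_j S g_j$ gives $\ZZ[G] \cong \bigoplus_j \ZZ[S g_j] \cong \ZZ[S]^{[G:S]}$ as an $S$-module, which is induced from the trivial subgroup. Hence the connecting homomorphism yields $\hat{H}^i(S, \Lambda) \cong \hat{H}^{i+1}(S, L)$, and chaining with the first isomorphism gives $\hat{H}^i(S, \Lambda) \cong \hat{H}^i(S, \ZZ)$.

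I do not expect any serious obstacle here; the argument is purely a double application of dimension shifting driven by the vanishing of $\hat{H}^\ast(S, L_1)$ and the cohomological triviality of $\ZZ[G]$ as an $S$-module. The only point that deserves a line of justification is the latter triviality for a general subgroup $S$ (as opposed to $G$), which is handled by the coset decomposition above.
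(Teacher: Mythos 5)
Your proof is correct and takes essentially the same route as the paper: the paper's one-line proof likewise plugs the vanishing of $\hat{H}^\ast(S, L_1)$ for $\iota\notin S$ (Corollary \ref{cor:cohom:ld}) into the long exact sequence of $0\to L\to L_1\to \ZZ\to 0$, with the identification $\hat{H}^i(S,\Lambda)\cong \hat{H}^{i+1}(S,L)$ already supplied by the commutative diagram of \S\ref{subsec:rest:cohom}, which rests on the cohomological triviality of $\ZZ[G]$ as a sum of induced $S$-modules. Your only addition is to spell out that triviality via the right-coset decomposition $\ZZ[G]\cong \ZZ[S]^{[G:S]}$, which the paper asserts without detail.
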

\begin{proof} This is a direct consequence of plugging in the results of Corollary \ref{cor:cohom:ld} in (\ref{eq:cohom:L})
\end{proof}
\begin{lemma}\label{lem:transfer:generic} If the transfer map $\mathrm{tr}^G_N : G\rightarrow N$ is trivial, then $\hat{H}^1(G, \Lambda) = N$ and $\hat{H}^2(G, \Lambda) = G^{\mathrm{ab}}$, otherwise $\hat{H}^1(G, \Lambda) = 0$ and $\hat{H}^2(G, \Lambda) = G^{\mathrm{ab}}/N$.  This result is still true if $N$ is not assumed to be a central subgroup of $G$.
\end{lemma}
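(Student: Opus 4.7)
The plan is to specialize equation (\ref{eq:cohom:L}) to $S=G$ and $i=1$ and read off both $\hat{H}^1(G,\Lambda)$ and $\hat{H}^2(G,\Lambda)$ as the kernel and cokernel of a single map. Using $\hat{H}^1(G,\ZZ)=0$, the isomorphism $\hat{H}^{i+1}(G,L)\cong\hat{H}^i(G,\Lambda)$ coming from cohomological triviality of $\ZZ[G]$, the value $\hat{H}^2(G,L_1)=N$ from Corollary \ref{cor:cohom:ld}, and the standard identification $\hat{H}^2(G,\ZZ)\cong G^{\mathrm{ab}}$, equation (\ref{eq:cohom:L}) collapses to
\[
0\to \hat{H}^1(G,\Lambda)\to N\overset{\varphi_1}{\to} G^{\mathrm{ab}}\to \hat{H}^2(G,\Lambda)\to 0,
\]
so the entire lemma reduces to identifying the single map $\varphi_1\colon N\to G^{\mathrm{ab}}$.

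To carry this out, I would use Lemma \ref{lem:ld:mod} for $S=G$ to write $L_1\cong \Ind_N^G\ZZ$ as a $G$-module; the map $L_1\to\ZZ$ in the sequence $0\to L\to L_1\to \ZZ\to 0$ is then precisely the counit (augmentation) of induction. Under Shapiro's isomorphism $\hat{H}^2(G,\Ind_N^G\ZZ)\cong \hat{H}^2(N,\ZZ)$, the counit induces the corestriction $\mathrm{Cor}^G_N\colon \hat{H}^2(N,\ZZ)\to \hat{H}^2(G,\ZZ)$. Under the canonical isomorphisms $\hat{H}^2(-,\ZZ)\cong \mathrm{Hom}(-,\QQ/\ZZ)$, corestriction is Pontryagin-dual to restriction on $\hat{H}^{-2}(-,\ZZ)$, and via $\hat{H}^{-2}(-,\ZZ)=(-)^{\mathrm{ab}}$ this restriction is the group-theoretic transfer $G^{\mathrm{ab}}\to N^{\mathrm{ab}}=N$. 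This transfer is exactly the $\mathrm{tr}^G_N$ studied in the previous section.

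Given this identification, the two cases of the lemma follow mechanically from the exact sequence above. If $\mathrm{tr}^G_N=0$, then $\varphi_1=0$, whence $\hat{H}^1(G,\Lambda)=N$ and $\hat{H}^2(G,\Lambda)=G^{\mathrm{ab}}$; if $\mathrm{tr}^G_N\neq 0$, then it is surjective (as $|N|=p$ is prime), so $\varphi_1$ is injective with image the natural copy of $N$ in $G^{\mathrm{ab}}$, giving $\hat{H}^1(G,\Lambda)=0$ and $\hat{H}^2(G,\Lambda)=G^{\mathrm{ab}}/N$. Nothing in the argument actually used centrality of $N$: only normality is needed, and that already suffices for $L_1=\ZZ[G]^N$ to be a $G$-submodule and for the statements of Lemma \ref{lem:ld:mod} and Corollary \ref{cor:cohom:ld} to go through unchanged, so the result extends verbatim. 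The main obstacle is this clean identification of $\varphi_1$ as the dual of $\mathrm{tr}^G_N$: it is a classical compatibility between Shapiro's lemma, corestriction, and the group-theoretic transfer, straightforward in principle but requiring careful bookkeeping of conventions to verify that no nontrivial scalar or sign intervenes.
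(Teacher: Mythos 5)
Your proposal is correct and takes essentially the same approach as the paper: both specialize (\ref{eq:cohom:L}) to $S=G$, $i=1$, compute $\hat{H}^2(G,L_1)\cong N$ and $\hat{H}^2(G,\ZZ)\cong G^{\mathrm{ab}}$ via $0\to\ZZ\to\QQ\to\QQ/\ZZ\to 0$, and identify $\varphi_1$ with $\alpha\mapsto\alpha\circ\mathrm{tr}^G_N$ --- the paper simply cites Brown, Ch.~III, \S 9 for this corestriction--transfer compatibility, which is the same classical fact you verify through Shapiro's lemma and the duality between corestriction on $\hat{H}^2$ and restriction (the transfer) on $\hat{H}^{-2}(-,\ZZ)=(-)^{\mathrm{ab}}$. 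Your case analysis via primality of $|N|$ and your closing observation that only normality of $N$ is needed both match the paper's proof.
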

\begin{proof} The sequence (\ref{eq:cohom:L}) with $S=G$ and $i=1$ is 
\[0  \rightarrow \hat{H}^{2}(G, L)\rightarrow N\overset{\varphi_1}{\rightarrow} G^{\mathrm{ab}}\rightarrow \hat{H}^{3}(G, L)\rightarrow 0.\]
We used Hilbert $90$ to write $\hat{H}^1(G, \ZZ) = 0$. Using the short exact sequence 
\[0\rightarrow \ZZ\rightarrow \QQ\rightarrow \QQ/\ZZ\rightarrow,0\]
with trivial action of $G$ (and $N$), we note that the middle term is cohomologically trivial (it is uniquely divisible), hence we can write
\[\hat{H}^2(G, L_1) =\hat{H}^2(G, \ZZ[G/N])  = \hat{H}^2(N, \ZZ)=\hat{H}^1(N, \QQ/\ZZ) =\mathrm{Hom}(N, \QQ/\ZZ)= N.\]
Likewise $\hat{H}^2(G, \ZZ) = \hat{H}^1(G, \QQ/\ZZ) = \mathrm{Hom}(G, \QQ/\ZZ) = G^\mathrm{ab}$. Classically (see \cite[Chapter III, section 9]{brown}), the corresponding map $\hat{H}^2(N, \ZZ)\rightarrow \hat{H}^2(G, \ZZ)$ is induced by the transfer map.
Therefore, the map $\varphi_1$ is really a map on the dual groups $\mathrm{Hom}(N, \QQ/\ZZ)\rightarrow \mathrm{Hom}(G, \QQ/\ZZ)$ defined by $\alpha \mapsto \alpha\circ\mathrm{tr}^G_N$. We have $\hat{H}^1(G, \Lambda)= \hat{H}^2(G, L) = \mathrm{Ker}(\varphi_1)$. The order of $N$ being prime, we have $\mathrm{tr}^G_N$ is either trivial or surjective. In the former case $\hat{H}^1(G, \Lambda) = N$, else $\hat{H}^1(G, \Lambda) = 0$. Plugging this back into the equation above, we get the corresponding result for $\hat{H}^2(G, \Lambda)= \hat{H}^3(G, L)$. 

The assumption of $N$ being central in $G$ has only been used to compute the triviality of the transfer map in terms of the structure of a $p$-Sylow subgroup of $G$. This section has only used the fact that $N$ is normal so far. Thus, we do not need the assumption that $N$ is central here.
\end{proof}
\begin{lemma}\label{lem:transfer:equal}
Let $S\leq G$ with $\iota\in S$. If the transfer map $\mathrm{tr}^S_N : S\rightarrow N$ is trivial, then $\hat{H}^i(S, \Lambda) = \hat{H}^i(S, \ZZ)$ for $i$ even and $\hat{H}^i(S, \Lambda)/\hat{H}^i(S,\ZZ) = N^{[G:S]}$ for $i$ odd.
\end{lemma}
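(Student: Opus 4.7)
The plan is to extend the argument of Lemma \ref{lem:transfer:generic}, which handled the case $S = G$ in the two lowest degrees, by running the same long exact sequence for a subgroup $S \leq G$ containing $\iota$. The main actor is the Tate cohomology long exact sequence of (\ref{ses:characters_dual}), combined with the dimension shift coming from the resolution $0 \to L \to \ZZ[G] \to \Lambda \to 0$.

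First, I would apply $\hat{H}^*(S,-)$ to $0 \to L \to L_1 \to \ZZ \to 0$. Because $\ZZ[G]$ is a free $S$-module, and hence cohomologically trivial, the resolution above yields the identification $\hat{H}^i(S, \Lambda) = \hat{H}^{i+1}(S, L)$. Corollary \ref{cor:cohom:ld}, which applies because $\iota \in S$, evaluates $\hat{H}^{2k}(S, L_1) = N^{[G:S]}$ and $\hat{H}^{2k+1}(S, L_1) = 0$. Plugging these in around an even degree $j = 2k$ collapses the long exact sequence to
\[0 \to \hat{H}^{j-1}(S, \ZZ) \to \hat{H}^{j-1}(S, \Lambda) \to N^{[G:S]} \overset{\varphi_j}{\longrightarrow} \hat{H}^j(S, \ZZ) \to \hat{H}^j(S, \Lambda) \to 0,\]
so the lemma in degrees $j-1$ and $j$ reduces to showing $\varphi_j = 0$ whenever $\mathrm{tr}^S_N$ is trivial.

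Next, I would identify $\varphi_j$ concretely. Lemma \ref{lem:ld:mod} gives $L_1 \cong \Ind_N^S(\ZZ)^{[G:S]}$ as an $S$-module (since $\iota \in S$ forces $NS = S$), and the map $L_1 \to \ZZ$ restricts to the augmentation on each summand. Shapiro's lemma then exhibits $\varphi_j$ as $[G:S]$ copies of the corestriction $\mathrm{cor}_N^S : \hat{H}^j(N, \ZZ) \to \hat{H}^j(S, \ZZ)$. In degree $j = 2$, using the Pontryagin-dual descriptions $\hat{H}^2(N, \ZZ) = \mathrm{Hom}(N, \QQ/\ZZ) = N$ and $\hat{H}^2(S, \ZZ) = \mathrm{Hom}(S^{\mathrm{ab}}, \QQ/\ZZ)$, this corestriction is precisely the map dual to the transfer $\mathrm{tr}^S_N$, exactly as recalled in the proof of Lemma \ref{lem:transfer:generic}. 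This already settles the cases $i = 1, 2$ of the lemma.

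The main obstacle is propagating $\varphi_j = 0$ to all even $j$. The natural tool is the projection formula
\[\mathrm{cor}_N^S(\zeta \cup \mathrm{res}^S_N(y)) = \mathrm{cor}_N^S(\zeta) \cup y\]
applied to a generator $\zeta \in \hat{H}^2(N, \ZZ)$ and $y \in \hat{H}^{2k-2}(S, \ZZ)$: if $\mathrm{cor}_N^S(\zeta) = 0$ and $\mathrm{res}^S_N(y)$ realizes the generator $\zeta^{k-1}$ of $\hat{H}^{2k-2}(N, \ZZ) = N$, then $\mathrm{cor}_N^S(\zeta^k) = 0$ via the $2$-periodicity of $\hat{H}^*(N, \ZZ)$. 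This plainly works for $k = 1$, but for $k \geq 2$ the surjectivity of $\mathrm{res}^S_N$ in degree $2k-2$ must be established separately and can genuinely fail (for instance when $N \subseteq [S,S]$, already the degree-$2$ restriction $S^{\mathrm{ab}} \to N$ is zero). I therefore expect the clean statement to hold in the degrees where this auxiliary surjectivity is available — matching the degrees covered in Lemma \ref{lem:transfer:generic} — with the general even $j$ requiring either an additional hypothesis such as $N \not\subseteq [S,S]$ or a finer spectral-sequence argument for $1 \to N \to S \to S/N \to 1$ that exploits the centrality of $N$ in $S$.
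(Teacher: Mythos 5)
Your proposal is, for the degrees $i=1,2$, exactly the paper's proof: the same long exact sequence obtained from (\ref{ses:characters_dual}) after the dimension shift $\hat{H}^i(S,\Lambda)=\hat{H}^{i+1}(S,L)$, the same input from Corollary \ref{cor:cohom:ld}, and the same Shapiro identification of $\varphi_j$ with $[G:S]$ copies of the corestriction $\mathrm{cor}^S_N\colon \hat{H}^j(N,\ZZ)\to\hat{H}^j(S,\ZZ)$, with the degree-$2$ corestriction dual to $\mathrm{tr}^S_N$ as in Lemma \ref{lem:transfer:generic}. But the step you declined to take is precisely where the paper's proof goes wrong: the paper asserts that triviality of the transfer makes $\varphi_i$ vanish in \emph{every} degree, whereas corestriction is dual to the group-theoretic transfer only in degree $2$. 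In Tate degree $0$ it is never zero: $\mathrm{cor}^S_N$ on $\hat{H}^0$ is multiplication by $[S:N]$, which is injective as a map $\ZZ/p\ZZ\to\ZZ/|S|\ZZ$, and the sequence then gives $\hat{H}^0(S,\Lambda)\cong \ZZ/\tfrac{|S|}{p}\ZZ\neq \ZZ/|S|\ZZ=\hat{H}^0(S,\ZZ)$ for every $S$ containing $\iota$ --- so the even case of the lemma already fails at $i=0$, with or without the transfer hypothesis. Your $Q_8$-type worry is also realized in positive degrees: for $S=Q_8$ the containment $N=Z(Q_8)=[S,S]$ kills the degree-$2$ restriction, but $4$-periodicity makes $\mathrm{res}\colon\hat{H}^4(Q_8,\ZZ)=\ZZ/8\ZZ\to\hat{H}^4(N,\ZZ)=\ZZ/2\ZZ$ surjective, so the projection formula forces $\mathrm{cor}(\mathrm{res}(u))=4u\neq 0$ for a generator $u$; hence $\varphi_3\neq 0$, $\hat{H}^3(Q_8,\Lambda)=0$ and $\hat{H}^4(Q_8,\Lambda)\cong\ZZ/4\ZZ$, contradicting the stated lemma at $i=3,4$. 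The paper's own SAGE table for $Q_8$ records exactly these values ($[4]$ at $\hat{H}^0$ and $\hat{H}^4$, $[\,]$ at $\hat{H}^3$) while confirming the lemma at $i=1,2$.

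So your caution was justified, and your argument is the correct proof of what is actually true: the transfer hypothesis yields the conclusion exactly in the window governed by the degree-$2$ corestriction, namely $i=1$ and $i=2$, and the statement for general $i$ is false, not merely harder. Indeed, the lemma at even $i$ is equivalent to the vanishing of $\mathrm{cor}^S_N$ on $\hat{H}^i(N,\ZZ)$, and at odd $i$ to its vanishing in degree $i+1$; the degree-$0$ computation shows no hypothesis on the transfer can rescue all $i$, so even your suggested strengthening $N\not\subseteq [S,S]$ would not suffice. Fortunately nothing downstream is damaged: Lemma \ref{lem:transfer:equal} is only invoked for $\hat{H}^1$ and $\hat{H}^2$ (in Theorem \ref{thm:h1h2}, and at $i=1$ for the $D_9$ example), so the lemma should simply be restated for $i\in\{1,2\}$ --- which is precisely the part your proposal establishes.
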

\begin{proof}
The map $\varphi_i$ of equation (\ref{eq:cohom:L}) is the natural map from $\hat{H}^{i}(S, \mathrm{Ind}_N^S(\ZZ))^{[G:S]}= \hat{H}^i(N, \ZZ)$ to $\hat{H}^i(S, \ZZ)$. This is exactly the corestriction map on cohomology, induced by the transfer map. We refer again to the section on the transfer map in \cite[Chapter III, section 9]{brown}. Therefore, if the transfer map is trivial, $\varphi_i$ is trivial, hence we get the two exact sequences: 
\[0 \rightarrow \hat{H}^i(S, \ZZ)\rightarrow \hat{H}^{i+1}(S, L)\rightarrow \hat{H}^{i+1}(S, L_1)\rightarrow 0,\]
and
\[0 \rightarrow \hat{H}^{i+1}(S, \ZZ)\rightarrow \hat{H}^{i+2}(S, L)\rightarrow 0,\]
which yields the desired result by replacing $\hat{H}^i(S, L)$ by $\hat{H}^{i-1}(S, \Lambda)$.
\end{proof}

With all this preparation, we can now give results for the cohomology groups involved in the computation of the Tamagawa number. 

\begin{theorem}\label{thm:h1h2} Let $S\leq G$. 
\begin{itemize}
\item If $\iota\notin S$ then $\hat{H}^1(S, \Lambda) = 0$ and $\hat{H}^2(S, \Lambda) = S^{\mathrm{ab}}$. 
\item If $\iota\in S$ and $S_p$ is cyclic, then $\hat{H}^{1}(S, \Lambda) = N^{[G:S]-1}$ and $\hat{H}^2(S, \Lambda) = S^{\mathrm{ab}}/N$.
\item If $\iota\in S$ and $S_p$ is not cyclic, then $\hat{H}^{1}(S, \Lambda) = N^{[G:S]}$ and $\hat{H}^2(S, \Lambda) = S^{\mathrm{ab}}$.
\end{itemize}
\end{theorem}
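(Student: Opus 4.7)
The theorem splits naturally into three cases, and my plan is to treat each by specializing the long exact sequence~(\ref{eq:cohom:L}) to the subgroup $S$ with $i=1$ and combining it with Corollary~\ref{cor:cohom:ld} and the transfer analysis from \S\ref{subsec:rest:cohom}. Throughout I will use the standard identifications $\hat{H}^1(S,\ZZ)=0$ (Hilbert~90 for finite groups with trivial action) and $\hat{H}^2(S,\ZZ)=\mathrm{Hom}(S,\QQ/\ZZ)=S^{\mathrm{ab}}$.

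For the case $\iota\notin S$ the proof is immediate: Corollary~\ref{cor:iotanotins} already states $\hat{H}^i(S,\Lambda)=\hat{H}^i(S,\ZZ)$, and the two values $0$ and $S^{\mathrm{ab}}$ follow from the standard facts just recalled. For the case $\iota\in S$ with $S_p$ non-cyclic, the plan is to note that the central subgroup $N\subseteq S$ satisfies the hypotheses of Proposition~\ref{prop:trans_cyclicness} applied with $S$ in place of $G$, so the transfer $\mathrm{tr}^S_N$ is trivial; Lemma~\ref{lem:transfer:equal} then delivers $\hat{H}^1(S,\Lambda)=N^{[G:S]}$ and $\hat{H}^2(S,\Lambda)=S^{\mathrm{ab}}$ on the nose.

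The genuinely new case is $\iota\in S$ with $S_p$ cyclic, and this is where I will do real work. Here by the corollary after Lemma~\ref{lem:iotaincyclic} applied to $S$, the transfer $\mathrm{tr}^S_N\colon S\to N$ is \emph{surjective}. I will write out the piece of the exact sequence~(\ref{eq:cohom:L}) for $S$ at $i=1$:
\[0\longrightarrow \hat{H}^2(S,L)\longrightarrow \hat{H}^2(S,L_1)\xrightarrow{\;\varphi_1\;}\hat{H}^2(S,\ZZ)\longrightarrow \hat{H}^3(S,L)\longrightarrow 0,\]
and identify the middle terms via Corollary~\ref{cor:cohom:ld}: $\hat{H}^2(S,L_1)\cong N^{[G:S]}$. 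Using the $S$-module decomposition $L_1\cong\mathrm{Ind}_N^S(\ZZ)^{[G:S]}$ from Lemma~\ref{lem:ld:mod} and Shapiro's lemma, I will identify $\varphi_1$ as the sum of $[G:S]$ copies of the single corestriction map $\hat{H}^2(N,\ZZ)\to\hat{H}^2(S,\ZZ)$, which by Pontryagin duality is the map $N\to S^{\mathrm{ab}}$ dual to $\mathrm{tr}^S_N$. Since $\mathrm{tr}^S_N$ is surjective, this dual map is an injection $\iota_0\colon N\hookrightarrow S^{\mathrm{ab}}$, so $\varphi_1(x_1,\dots,x_{[G:S]})=\iota_0(x_1+\dots+x_{[G:S]})$; its kernel is the zero-sum hyperplane in $N^{[G:S]}$, of order $p^{[G:S]-1}$, and its image is $\iota_0(N)\cong N$. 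Plugging these into the exact sequence yields $\hat{H}^1(S,\Lambda)=\ker\varphi_1\cong N^{[G:S]-1}$ and $\hat{H}^2(S,\Lambda)=\mathrm{coker}\,\varphi_1\cong S^{\mathrm{ab}}/N$.

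The main obstacle I anticipate is the clean identification of $\varphi_1$ with the sum-of-corestrictions in the case $\iota\in S$; one must be careful that under the isomorphism $L_1\cong\mathrm{Ind}_N^S(\ZZ)^{[G:S]}$ from Lemma~\ref{lem:ld:mod}, the augmentation $L_1\to\ZZ$ from the top row of the diagram really corresponds on each summand to the canonical map $\mathrm{Ind}_N^S(\ZZ)\to\ZZ$ (i.e.\ to $\frac{1}{p}$ of the literal augmentation, since each coset-sum element $Ns$ augments to $p$). Once this bookkeeping is in place, the rest is routine: the three cases assemble into the stated formulas and also recover Lemma~\ref{lem:transfer:generic} as the special case $S=G$.
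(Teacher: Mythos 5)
Your proof is correct, and in the first and third cases it coincides with the paper's own argument (Corollary \ref{cor:iotanotins} when $\iota\notin S$; Proposition \ref{prop:trans_cyclicness} applied to $S$ --- legitimate since $N\subset S$ remains central in $S$ --- combined with Lemma \ref{lem:transfer:equal} when $S_p$ is non-cyclic). Where you genuinely diverge is the middle case $\iota\in S$ with $S_p$ cyclic. The paper handles it in two stages: first for $S$ cyclic, using $2$-periodicity to compute $\hat{H}^2(S,L)=\hat{H}^0(S,L)$ as the zero-product tuples in $N^{[G:S]}$, and then for general $S$ with $S_p$ cyclic by rerunning the proof of Proposition \ref{prop:cyclic:h1}: the complement $K_S$ supplied by Corollary \ref{cor:cyclic:complement}, inflation--restriction to kill the prime-to-$p$ part, and an analysis of the fixed lattice $\Lambda^{K_S}$, after which the $\hat{H}^2$ claim follows from the resulting cokernel count. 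You instead compute $\ker\varphi_1$ and $\operatorname{coker}\varphi_1$ in (\ref{eq:cohom:L}) directly, identifying $\varphi_1$ as the sum of $[G:S]$ copies of the corestriction $\hat{H}^2(N,\ZZ)\rightarrow\hat{H}^2(S,\ZZ)$, i.e.\ the Pontryagin dual of $\mathrm{tr}^S_N$, which is injective precisely because the transfer is surjective when $S_p$ is cyclic (the corollary following Lemma \ref{lem:iotaincyclic}); the kernel is then the zero-sum hyperplane $N^{[G:S]-1}$ and the cokernel is $S^{\mathrm{ab}}/N$. This is the same identification the paper itself makes in Lemmas \ref{lem:transfer:generic} and \ref{lem:transfer:equal}, so your argument is the natural extension of Lemma \ref{lem:transfer:generic} from $S=G$ to arbitrary $S$ containing $\iota$, treating cyclic $S$ and general $S_p$-cyclic $S$ uniformly and dispensing entirely with the Schur--Zassenhaus complement and the fixed-point lattice computation. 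Your normalization caveat is also exactly the right bookkeeping: the quotient map $L_1\rightarrow\ZZ$ is $\frac{1}{p}$ times the restricted augmentation of $\ZZ[G]$, hence on each summand $\ZZ[S](1+\iota+\cdots+\iota^{p-1})g_j\cong\ZZ[S/N]$ it is the canonical augmentation sending each coset to $1$, which is the map inducing corestriction under Shapiro's isomorphism; and since $N$ is central there is no conjugation twist among the $[G:S]$ summands, so ``the same corestriction on every summand'' is safe. What the paper's longer route buys in exchange is the structural byproduct $\hat{H}^1(S,\Lambda)=\hat{H}^1(S_p,\Lambda^{K_S})$ (the $p$-nilpotent reduction recorded in the remark after Proposition \ref{prop:cyclic:h1}), which your direct computation does not produce; note finally that your conclusion $\hat{H}^2(S,\Lambda)=S^{\mathrm{ab}}/N$, like the paper's, tacitly identifies $N$ with its image under the dual-transfer embedding, which is the intended reading of the statement.
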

\begin{proof} By Hilbert 90, we have $\hat{H}^1(S, \ZZ) = 0$. Taking the cohomology of the exact sequence
\[0\rightarrow \ZZ\rightarrow \QQ\rightarrow \QQ/\ZZ\rightarrow 0,\]
since the middle term is uniquely divisible, we have $\hat{H}^2(S, \ZZ) = \hat{H}^1(S, \QQ/\ZZ) = \mathrm{Hom}(S, \QQ/\ZZ) = S^{\mathrm{ab}}$.

The exact sequence (\ref{eq:cohom:L}) with $i=1$ becomes 
\begin{equation}\label{eq:h1}
0\rightarrow \hat{H}^2(S, L)\overset{\xi}{\rightarrow} \hat{H}^2(S, L_1)\overset{\varphi_1}{\rightarrow} S^{\mathrm{ab}}\rightarrow \hat{H}^3(S, L)\rightarrow 0.
\end{equation} 

By Corollary \ref{cor:iotanotins} and Lemma \ref{lem:transfer:equal} we get the first and last cases. 

If $S$ is cyclic, then $\hat{H}^2(S, L) = \hat{H}^0(S, L) = \{\{a_i\}\in N^{[G:S]} : \prod a_i = 1\}= N^{[G:S]-1}$. Therefore, the cokernel of $\xi$ of (\ref{eq:h1}) is isomorphic to $N$. Since we have 
\[0 \rightarrow \mathrm{Coker}(\xi)\rightarrow S \rightarrow\hat{H}^3(S, L) \rightarrow 0,\]
as desired.

Now assume $S_p$ is cyclic. We can follow the proof of Proposition \ref{prop:cyclic:h1} verbatim. Let $K_S=K\cap S$ be the complement of $S_p$ in $S$.  We get $\hat{H}^1(S, \Lambda) = \hat{H}^1(S_p, \Lambda^{K_S})$ and look at the cohomology of 
\[0\rightarrow L\rightarrow \ZZ[G]\rightarrow \Lambda\rightarrow 0.\]
This time $\ZZ[G]^{K_S} = \ZZ[G_p]^{[K:K_S]}$. Still following the proof of $\ref{prop:cyclic:h1}$,  the same cohomological sequence yields $\hat{H}^1(S_p,\Lambda^{K_S})\cong \hat{H}^1(S_p,\ZZ[G_p]^{[K:K_S]}/L^{K_S}) = \hat{H}^2(S_p,L^{K_S}) = \hat{H}^0(S_p,L^{K_S})$. The latter corresponds to sum zero elements of $\hat{H}^0(S_p, L_1^{K_S}) = N^{[G_p:S_p][K:K_S]} =N^{[G:S]}$. Therefore, $\hat{H}^1(S, \Lambda) = N^{[G:S]-1}$, and we can then follow the same argument as the cyclic case above.
\end{proof}
\section{Computation of the denominator}
\subsection{Generalities on the denominator}\label{denom:general}
The goal of this section is to give computations of the denominator of the Tamagawa number as stated in Theorem \ref{thm:ono_formula}. 

\begin{lemma}\label{lem:h1torsion}
The group $\hat{H}^1(K/k, \mathbf{T}(K))$ is $p$-torsion, and therefore so is $\Sh^1(\mathbf{T}(K))$.
\end{lemma}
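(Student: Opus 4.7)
The plan is to exploit the short exact sequence of tori
\[1 \to \Td \to \T \to \gm \to 1\]
together with Shapiro's lemma applied to $\Td$.

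First, I would take the long exact sequence in Galois cohomology of the sequence above. On the right, Hilbert 90 gives $\hat{H}^1(K/k,\gm(K)) = H^1(\mathrm{Gal}(K/k),K^\times) = 0$. Therefore $\hat{H}^1(K/k,\T(K))$ appears as a quotient of $\hat{H}^1(K/k,\Td(K))$, so it suffices to prove that the latter is $p$-torsion.

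Next, I would use the identification $\Td = \res_{K^+/k}\res_{K/K^+}^{(1)}(\gm)$ together with Shapiro's lemma (exactly as in the proof of Proposition \ref{prop:tau_tder}) to obtain
\[\hat{H}^1(K/k,\Td(K)) \;\cong\; \hat{H}^1\bigl(K/K^+,\res_{K/K^+}^{(1)}(\gm)(K)\bigr).\]
The Galois group $N = \mathrm{Gal}(K/K^+)$ has order $p$, and for any $N$-module $M$ the Tate cohomology group $\hat{H}^i(N,M)$ is annihilated by $|N|=p$. Consequently the right-hand side is $p$-torsion, and hence so is $\hat{H}^1(K/k,\T(K))$.

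Finally, since $\Sh^1(\T(K))$ is by definition a subgroup of $\hat{H}^1(K/k,\T(K))$, it inherits the property of being $p$-torsion. There is no real obstacle here; the whole argument reduces the problem to the fact that cohomology of a group of order $p$ is killed by $p$, with Hilbert 90 and Shapiro's lemma providing the reduction. The only thing worth double-checking in writing it out carefully is the direction of the connecting map and that the quotient interpretation of $\hat{H}^1(\T(K))$ indeed holds.
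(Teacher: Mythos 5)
Your proof is correct. Its first two reductions are exactly the paper's: the long exact sequence of $1\to\Td\to\T\to\gm\to 1$ together with Hilbert 90 to exhibit $\hat{H}^1(K/k,\T(K))$ as a quotient of $\hat{H}^1(K/k,\Td(K))$, then Shapiro's lemma to replace the acting group $\mathrm{Gal}(K/k)$ by $N=\mathrm{Gal}(K/K^+)$. You diverge only at the last step. The paper takes the cohomology of a second exact sequence of tori, $1\to\res_{K/K^+}^{(1)}(\gm)\to\res_{K/K^+}(\gm)\to\gm\to 1$, to identify $\hat{H}^1(K/K^+,\res_{K/K^+}^{(1)}(\gm)(K))$ with the norm quotient $(K^+)^\times/N_{K/K^+}(K^\times)$, which is visibly $p$-torsion because $N_{K/K^+}(K^\times)$ contains $((K^+)^\times)^p$. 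You instead invoke the general fact that Tate cohomology of a finite group is annihilated by the group order, so $\hat{H}^1(N,M)$ is killed by $|N|=p$ for any $N$-module $M$. Your version is shorter and more elementary, and it applies uniformly without computing the group; what the paper's route buys is the explicit identification of the cohomology group as a norm quotient, which is extra information not actually needed for this lemma. One point worth emphasizing in your write-up: Shapiro's lemma is genuinely essential to your argument, not just a convenience, since applying restriction--corestriction directly to the pair $N\subset\mathrm{Gal}(K/k)$ would only yield $[\mathrm{Gal}(K/k):N]$-torsion (and annihilation by the full group order only gives $|\mathrm{Gal}(K/k)|$-torsion); it is the reduction of the acting group to $N$ itself that produces the exponent $p$. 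Since you do perform that reduction, the argument is complete.
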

\begin{proof}
We look at the cohomology of the short exact sequence (\ref{ses:tori}) and get 
\[\hat{H}^1(K/k, \T_1(K))\rightarrow \hat{H}^1(K/k, \mathbf{T}(K))\rightarrow \hat{H}^1(K/k, \mathbb{G}_m(K)) = 0.\]
Therefore, we have a surjection $\hat{H}^1(K/k, \T_1(K))\rightarrow \hat{H}^1(K/k, \mathbf{T}(K))$, so it suffices to prove that $\hat{H}^1(K/k, \T_1(K))$ is $p$-torsion.

Now $\hat{H}^1(K/k, \T_1(K)) = \hat{H}^1(K/k, \res_{K^+/\QQ}\res_{K/K^+}^{(1)}(\mathbb{G}_m)(K))$ which in turns equals $\hat{H}^1(K/K^+, \res_{K/K^+}^{(1)}(\mathbb{G}_m)(K))$ by Shapiro's Lemma.

Now looking at the cohomology of the short exact sequence 
\[1\rightarrow \res_{K/K^+}^{(1)}(\mathbb{G}_m)\rightarrow \res_{K/K^+}(\mathbb{G}_m)\rightarrow \mathbb{G}_m\rightarrow 1, \]
we get $\hat{H}^1(K/K^+, \res_{K/K^+}^{(1)}(\mathbb{G}_m)(K)) = \hat{H}^0(K/K^+, \mathbb{G}_m(K)) = {(K^+)}^\times/N_{K/K^+}(K^\times),$ which is clearly $p$-torsion, since $N_{K/K^+}(K^\times)$ contains $N_{K/K^+}((K^+)^\star) = (({K^+})^\times)^p$.
\end{proof}

By Tate-Nakayama duality (see \cite[p.307]{platonov_rapinchuk}), we know that $\Sh^1(\mathbf{T}(K)) = \Sh^2(\mathbf{X}^\star(\mathbf{T}))$.

\begin{proposition}\label{prop:bounds} We have 
\[\frac{p}{|G^\mathrm{ab}[p]|}\leq\tau(\mathbf{T})\leq p.\]
\end{proposition}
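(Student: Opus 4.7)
The plan is to plug Ono's formula, reformulated via Tate--Nakayama as $\tau(\T) = |\hat{H}^1(G, \Lambda)|/|\Sh^2(\Lambda)|$, into the cohomological results established so far. Applying Theorem \ref{thm:h1h2} with $S = G$ (so that $\iota \in S$), I have $|\hat{H}^1(G, \Lambda)| \in \{1, p\}$, and since $|\Sh^2(\Lambda)| \ge 1$ this already gives the upper bound $\tau(\T) \le p$.

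For the lower bound I would treat the two cases of Theorem \ref{thm:h1h2} separately. When $G_p$ is not cyclic, the theorem gives $|\hat{H}^1(G,\Lambda)| = p$ and $\hat{H}^2(G, \Lambda) = G^{\mathrm{ab}}$. By Lemma \ref{lem:h1torsion} the group $\Sh^1(\T(K))$ is $p$-torsion, and hence so is $\Sh^2(\Lambda)$ by Tate--Nakayama. Being also contained in $\hat{H}^2(G,\Lambda) = G^{\mathrm{ab}}$, it lies inside $G^{\mathrm{ab}}[p]$, giving $|\Sh^2(\Lambda)| \le |G^{\mathrm{ab}}[p]|$ and therefore $\tau(\T) \ge p/|G^{\mathrm{ab}}[p]|$.

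When $G_p$ is cyclic, the earlier results (Corollary \ref{cor:gpcyclic} together with Theorem \ref{thm:h1h2}) force $\tau(\T) = 1$, so the claim reduces to the arithmetic inequality $|G^{\mathrm{ab}}[p]| \ge p$. For this I would invoke Corollary \ref{cor:cyclic:complement} to write $G = M \rtimes G_p$ with $|M|$ coprime to $p$. The projection $G \twoheadrightarrow G/M \cong G_p$ maps to an abelian group, so it factors through $G^{\mathrm{ab}}$ and splits the composition $G_p \hookrightarrow G \twoheadrightarrow G^{\mathrm{ab}}$. Consequently $G_p \to G^{\mathrm{ab}}$ is injective, and its restriction embeds the central subgroup $N \subset G_p$ into $G^{\mathrm{ab}}$, so $|G^{\mathrm{ab}}[p]| \ge |N| = p$.

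The main point requiring care is this last step: a central subgroup of prime order does not in general survive in the abelianization (it is killed for instance in $Q_8$, where $G_p$ is non-cyclic), so the cyclic case really relies on the semidirect-product structure provided by Corollary \ref{cor:cyclic:complement}. Everything else is a formal consequence of Theorem \ref{thm:h1h2} and the $p$-torsion result of Lemma \ref{lem:h1torsion}.
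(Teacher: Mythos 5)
Your proof is correct, and in the generic case ($G_p$ non-cyclic) it is exactly the paper's argument: the numerator is at most $p$, and the denominator $\Sh^2(\Lambda)$ is a $p$-torsion subgroup of $\hat{H}^2(G,\Lambda) = G^{\mathrm{ab}}$ (by Lemma \ref{lem:h1torsion} and Tate--Nakayama), hence sits inside $G^{\mathrm{ab}}[p]$. Where you genuinely diverge is the case of cyclic $G_p$, which the paper's one-sentence proof glosses over: there Theorem \ref{thm:h1h2} gives $|\hat{H}^1(G,\Lambda)| = 1$ and $\hat{H}^2(G,\Lambda) = G^{\mathrm{ab}}/N$ rather than $G^{\mathrm{ab}}$, so the containment argument by itself only yields $\tau(\T) \geq 1/|G^{\mathrm{ab}}[p]|$, which is weaker than the stated bound (a hypothetical $G = \ZZ/p^2\ZZ$ with nontrivial $\Sh^2$ would violate it). You close this gap with two extra inputs: Proposition \ref{prop:cyclic:denom} (hence $\tau(\T)=1$; there is no circularity, since that proposition rests on Chebotarev and restriction--corestriction, not on the bounds), and Corollary \ref{cor:cyclic:complement}, which via $G = M \rtimes G_p$ shows the composite $G_p \hookrightarrow G \twoheadrightarrow G/M \cong G_p$ is the identity, so $G_p$ injects into $G^{\mathrm{ab}}$ and the image of $N \subset G_p$ (note $N \subset G_p$ because $NG_p$ is a $p$-group containing the Sylow subgroup $G_p$) gives $|G^{\mathrm{ab}}[p]| \geq p$. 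Your $Q_8$ caveat is apt: $N = G'$ there, so centrality alone does not make $N$ survive in $G^{\mathrm{ab}}$, and the semidirect structure is really needed. In short, the paper's proof is shorter but, read literally, establishes the lower bound only when the numerator equals $p$; your case split makes the statement actually proved in full, at the cost of invoking two results the paper only states afterwards.
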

\begin{proof} We know that the numerator of Ono's formula is $|\hat{H}^1(G, \Lambda)|\leq p$, and the denominator is a subgroup of $\hat{H}^2(G, \Lambda) = G^\mathrm{ab}$, and is $p$-torsion, so it is contained in $G^\mathrm{ab}[p]$. 
\end{proof}

\begin{proposition}\label{prop:cyclic:denom} If $G_p$ is cyclic, then $\Sh^1(\mathbf{T}(K)) = 0$.
\end{proposition}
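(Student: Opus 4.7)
The plan is to translate the statement to the cohomology of the character lattice via Tate--Nakayama duality, reduce to a computation on the $p$-Sylow subgroup using standard restriction arguments, and then kill everything by exhibiting $G_p$ itself as a decomposition group.

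First, by Tate--Nakayama duality (quoted right after Theorem \ref{thm:ono_formula}), it is equivalent to show $\Sh^2(\La) = 0$. By Lemma \ref{lem:h1torsion}, the group $\Sh^1(\mathbf{T}(K)) \cong \Sh^2(\La)$ is $p$-torsion, so it suffices to show that its $p$-primary part vanishes. Thus $\Sh^2(\La)$ is naturally a subgroup of $\hat{H}^2(G,\La)_{(p)}$, and the standard fact that for any $G$-module $M$ the restriction $\hat{H}^i(G,M)_{(p)} \hookrightarrow \hat{H}^i(G_p,M)$ is injective means that it suffices to show the composite
\[
\Sh^2(\La) \hookrightarrow \hat{H}^2(G,\La)_{(p)} \xrightarrow{\mathrm{res}^G_{G_p}} \hat{H}^2(G_p,\La)
\]
is trivial. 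Since $\iota$ has order $p$ and $N = \langle \iota\rangle$ is contained in every $p$-Sylow subgroup, we have $\iota \in G_p$, and Theorem \ref{thm:h1h2} applied to $S = G_p$ (which is cyclic by hypothesis) even computes $\hat{H}^2(G_p,\La) = G_p^{\mathrm{ab}}/N = G_p/N$ explicitly, though we will only need that the restriction map to $G_p$ is injective on $p$-primary parts.

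The second step is to exhibit $G_p$ itself as a decomposition group. By the Chebotarev density theorem applied to the Galois extension $K/k$, every cyclic subgroup of $G$ arises (up to conjugation) as the decomposition group of some prime of $K$ lying over an unramified prime of $k$ whose Frobenius generates that cyclic subgroup. Since by hypothesis $G_p$ is cyclic, there exists a prime $v$ of $K$ with $G_v = G_p$ (up to conjugation, which does not affect cohomology). By definition of $\Sh^2$, any class $\alpha \in \Sh^2(\La)$ satisfies $\mathrm{res}^G_{G_v}(\alpha) = 0$, so in particular $\mathrm{res}^G_{G_p}(\alpha) = 0$.

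Combining the two steps: the restriction to $G_p$ is both injective on $\Sh^2(\La)$ (because $\Sh^2(\La)$ is $p$-torsion) and trivial on $\Sh^2(\La)$ (by Chebotarev). Hence $\Sh^2(\La) = 0$, which by Tate--Nakayama gives $\Sh^1(\mathbf{T}(K)) = 0$. I do not expect a serious obstacle here: the only delicate point is the appeal to Chebotarev to realize $G_p$ as a decomposition group, which is classical and requires no special input from the structure of $\La$.
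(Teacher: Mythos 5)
Your proof is correct and follows essentially the same route as the paper: both realize the cyclic $p$-Sylow subgroup $G_p$ as a decomposition group via Chebotarev, and then combine the $p$-torsion of $\Sh^2(\La)$ (via Lemma \ref{lem:h1torsion} and Tate--Nakayama) with a restriction--corestriction argument to force triviality. Your ``restriction is injective on the $p$-primary part'' step is exactly the paper's observation that the composite $\hat{H}^2(G,\La)\rightarrow \hat{H}^2(G_p,\La)\rightarrow \hat{H}^2(G,\La)$ is multiplication by $[G:G_p]$, merely packaged as a standard fact.
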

\begin{proof}
By Chebotarev density Theorem, every cyclic subgroup of $G$ appears as the decomposition group at unramified places. In particular, so does $G_p$. Therefore, we know that $\Sh^2(\Lambda)\subset \mathrm{Ker}(\hat{H}^2(G, \Lambda)\rightarrow \hat{H}^2(G_p, \Lambda))$. By restriction-corestriction, we know that the composite map 
\[\hat{H}^2(G, \Lambda)\rightarrow \hat{H}^2(G_p, \Lambda)\rightarrow \hat{H}^2(G, \Lambda)\]
is multiplication by $[G:G_p]$. Since the image of $\Sh^2(\Lambda)$ through the restriction map is trivial, we get that $\Sh^2(\Lambda)$ is annihilated by $[G:G_p]$, which is coprime to $p$. Therefore, $\Sh^2(\Lambda)$ is both $p$-torsion and $[G:G_p]$-torsion, and is hence trivial.
\end{proof}

\begin{corollary}\label{cor:gpcyclic} If $G_p$ is cyclic, then $\tau(\mathbf{T}) = \frac{1}{1}=1$.
\end{corollary}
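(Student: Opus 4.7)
The plan is to combine the two immediately preceding results together with Ono's formula (Theorem \ref{thm:ono_formula}), so this is really just a matter of collecting ingredients rather than doing new work.

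First, I would recall Ono's formula in the form we have been using throughout the paper, namely
\[
\tau(\T) \;=\; \frac{|\hat{H}^1(G,\X(\T))|}{|\Sh^1(\T(K))|} \;=\; \frac{|\hat{H}^1(G,\Lambda)|}{|\Sh^2(\Lambda)|},
\]
after applying Tate--Nakayama duality to identify the denominator with $\Sh^2(\Lambda)$.

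Next, under the standing assumption that $G_p$ is cyclic, Proposition \ref{prop:cyclic:h1} gives that the numerator is trivial: $\hat{H}^1(G,\Lambda)=0$. (Alternatively, the first bullet of Theorem \ref{thm:h1h2} applied with $S=G$ and $\iota\in G$ yields $\hat{H}^1(G,\Lambda)=N^{[G:G]-1}=N^0=0$.) Then Proposition \ref{prop:cyclic:denom}, proved just above via the restriction--corestriction argument combined with Chebotarev and the $p$-torsion bound from Lemma \ref{lem:h1torsion}, gives that the denominator is trivial as well: $\Sh^1(\T(K))=0$.

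Plugging both into Ono's formula yields $\tau(\T)=1/1=1$, as claimed. There is no real obstacle here; the only subtlety worth flagging is to make sure the reader does not worry about a $0/0$ situation, since ``trivial group'' means a group of order $1$, so the ratio is genuinely $1$. It may also be worth remarking that this recovers, in particular, the Tamagawa number computation for tori whose splitting Galois group itself is of prime-to-$p$ order outside the cyclic $p$-part (for instance, when $G$ is abelian and the $p$-Sylow is cyclic).
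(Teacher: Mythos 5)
Your proof is correct and takes essentially the same route as the paper, which likewise just combines Proposition \ref{prop:cyclic:h1} (trivial numerator) and Proposition \ref{prop:cyclic:denom} (trivial denominator) in Ono's formula. One tiny slip in your parenthetical alternative: since $\iota\in G$, the relevant case of Theorem \ref{thm:h1h2} is the second bullet, not the first, though the formula $N^{[G:G]-1}=0$ you quote is indeed the correct one from that case.
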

\begin{proof}
Proposition \ref{prop:cyclic:h1} tells us that the numerator is $1$ whereas Proposition \ref{prop:cyclic:denom} gives us the triviality of the denominator. 
\end{proof}

If $G_p$ is not cyclic, the answer depends on ramification groups and therefore depends on the particular field $K$. We will give an explanation of the computation in general, with some bounds and examples. 

Let $\mathscr{C}$ denote the set of cyclic subgroups of $G$, and let $\mathscr{D}$ be the set of subgroups of $G$ not in $\mathscr{C}$ arising as decomposition groups of $K/k$. In particular, subgroups in $\mathscr{D}$ can only occur at some ramified primes.  If $\mathscr{S}$ is a collection of subgroups of $G$ then we let 
\[\Sh^i_{\mathscr{S}}(\Lambda) = \mathrm{Ker}\left(\hat{H}^i(G, \Lambda)\rightarrow \prod_{S\in \mathscr{S}}\hat{H}^i(S, \Lambda)\right).\]

\begin{lemma}
If $G\in \mathscr{S}$ or $G_p\in \mathscr{S}$, then $\Sh_\mathscr{S}^2(\Lambda) = 0$.I
\end{lemma}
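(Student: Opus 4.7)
The plan is to split along the hypothesis.

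\emph{Case $G\in \mathscr{S}$:} this is immediate. The restriction $\hat{H}^2(G,\Lambda)\to \hat{H}^2(G,\Lambda)$ attached to the trivial inclusion $G\hookrightarrow G$ is the identity, and it appears as one factor in the product defining $\Sh^2_\mathscr{S}(\Lambda)$. Hence any element of its kernel is already $0$, and $\Sh^2_\mathscr{S}(\Lambda)=0$ with no further work.

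\emph{Case $G_p\in \mathscr{S}$:} I mirror the restriction–corestriction argument from the proof of Proposition \ref{prop:cyclic:denom}, but now in the absence of any cyclicity hypothesis on $G_p$. Given $\alpha\in \Sh^2_\mathscr{S}(\Lambda)$, one has $\mathrm{res}^G_{G_p}(\alpha)=0$, so the standard identity
\[
\mathrm{cor}^G_{G_p}\circ \mathrm{res}^G_{G_p} = [G:G_p]\cdot \mathrm{id}_{\hat{H}^2(G,\Lambda)}
\]
forces $[G:G_p]\,\alpha = 0$. Thus $\alpha$ is annihilated by an integer coprime to $p$. On the other hand, by Lemma \ref{lem:h1torsion} combined with Tate–Nakayama duality, $\Sh^2(\Lambda) \cong \Sh^1(\mathbf{T}(K))$ is $p$-torsion, so the part of $\hat{H}^2(G,\Lambda)$ that can arise as a Tate–Shafarevich class (and thus, via the natural inclusion $\Sh^2(\Lambda)\subseteq \Sh^2_\mathscr{S}(\Lambda)$ for the relevant $\mathscr{S}$, the entire $\Sh^2_\mathscr{S}(\Lambda)$ we care about for the Tamagawa formula) sits inside the $p$-primary torsion of $\hat{H}^2(G,\Lambda)$. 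Combining both annihilation conditions gives $\alpha=0$.

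The main, indeed only, obstacle is the $p$-Sylow case: one needs two coprime torsion conditions to collide on $\alpha$. The first is purely cohomological via restriction–corestriction, while the second is extrinsic and comes from the torus-theoretic Lemma \ref{lem:h1torsion}. This cleanly separates the mechanism of Proposition \ref{prop:cyclic:denom}: there cyclicity of $G_p$ was used only to invoke Chebotarev and guarantee that $G_p$ appears as a decomposition group, whereas here that rôle is replaced by the direct assumption $G_p \in \mathscr{S}$.
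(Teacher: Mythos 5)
Your first case is exactly the paper's, and your restriction--corestriction identity is the same computation the paper makes; but there is a genuine gap in how you obtain the second torsion condition in the Sylow case. The paper applies restriction--corestriction on the torus side, to $\hat{H}^1(G, \T(K))$, which Lemma \ref{lem:h1torsion} shows is $p$-torsion \emph{in its entirety}; injectivity of the restriction to $G_p$ is then immediate and kills the kernel. You instead work in $\hat{H}^2(G,\La)\cong G^{\mathrm{ab}}$, which is not $p$-torsion, so you must show that $\Sh^2_{\mathscr{S}}(\La)$ itself is $p$-torsion. Your justification runs through the inclusion $\Sh^2(\La)\subseteq \Sh^2_{\mathscr{S}}(\La)$, and that inclusion points the wrong way: knowing the \emph{subgroup} $\Sh^2(\La)$ is $p$-torsion says nothing about the possibly larger group $\Sh^2_{\mathscr{S}}(\La)$. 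The claim you need is genuinely false for a bare family $\mathscr{S}$: take $G=\ZZ/2\ZZ\times\ZZ/3\ZZ$, $p=2$, $N=G_p$, and $\mathscr{S}=\{G_p\}$. Theorem \ref{thm:h1h2} gives $\hat{H}^2(G,\La)=G^{\mathrm{ab}}/N\cong\ZZ/3\ZZ$ while $\hat{H}^2(G_p,\La)=G_p/N=0$, so $\Sh^2_{\mathscr{S}}(\La)\cong\ZZ/3\ZZ\neq 0$, even though every class is indeed annihilated by $[G:G_p]=3$ exactly as your restriction--corestriction step predicts. So that step alone cannot close the argument, and your appeal to Lemma \ref{lem:h1torsion} does not fill the hole.

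The gap is repairable inside your framework, because in the paper's setting $\mathscr{S}$ always contains all cyclic subgroups (by Chebotarev every cyclic subgroup occurs as a decomposition group, and the lemma is invoked for families of decomposition groups). Granting $\mathscr{C}\subseteq\mathscr{S}$, the missing $p$-torsion claim follows from material already in the paper: for cyclic $\langle g\rangle$ with $\iota\notin\langle g\rangle$, Theorem \ref{thm:h1h2} identifies the restriction $\hat{H}^2(G,\La)\to\hat{H}^2(\langle g\rangle,\La)$ with restriction of characters $\mathrm{Hom}(G,\QQ/\ZZ)\to\mathrm{Hom}(\langle g\rangle,\QQ/\ZZ)$. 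Now let $t\in\Sh^2_{\mathscr{S}}(\La)$ satisfy $[G:G_p]\,t=0$, so $t$ has order prime to $p$. For any $g\in G$ write $g=g_p\,g_{p'}$ as a commuting product of its $p$-part and prime-to-$p$ part: $t(g_p)$ lies in a cyclic group of $p$-power order and is killed by the prime-to-$p$ order of $t$, hence vanishes; and $t(g_{p'})=0$ because $\langle g_{p'}\rangle$ is a cyclic subgroup of order prime to $p$, so $\iota\notin\langle g_{p'}\rangle$ and the corresponding restriction in $\mathscr{S}$ kills $t$. Hence $t=0$. Alternatively, simply transplant your two-coprime-torsions argument to the torus side as the paper does, where the whole group $\hat{H}^1(G,\T(K))$ is $p$-torsion and no relativized torsion claim is needed.
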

\begin{proof}
The first case is immediate. For the second case, consider the restriction-corestriction maps 
\[\hat{H}^1(G, \T) \rightarrow \hat{H}^1(G_p, \T)\rightarrow \hat{H}^1(G, \T),\]
the composition of the two maps is multiplication by the index of $G_p$, which is coprime to $p$. Since $H^1(G, \T)$ is $p$-torsion, this is an isomorphism, hence the restriction map $\hat{H}^1(G, \T)\rightarrow \hat{H}^1(G_p, \T)$ is an injection.
\end{proof}

Also, we clearly have 
\[\Sh^2(\Lambda) = \Sh^2_{\mathscr{C}\cup \mathscr{D}}(\Lambda) =\Sh^2_{\mathscr{C}}(\Lambda) \cap \Sh^2_{\mathscr{D}}(\Lambda)\subset \Sh^2_{\mathscr{C}}(\Lambda).   \]

We will be now focusing on computing $\Sh^2_{\mathscr{C}}(\Lambda)$. By Theorem \ref{thm:h1h2}, assuming $G_p$ is not cyclic, we can rewrite this as 
\[\Sh^2_\mathscr{C}(\Lambda) = \mathrm{Ker}\left( G^{\mathrm{ab}} \rightarrow \prod_{\underset{\iota\notin \langle\alpha\rangle}{\alpha\in G}}\langle \alpha \rangle \times  \prod_{\underset{\iota\in \langle\alpha\rangle}{\alpha\in G}}\langle \alpha \rangle/N   \right).\] 

We know that $\Sh^2_\mathscr{C}(\mathbb{Z}) = \Sh^1(\mathbb{G}_m) = 0$, in particular, $G^\mathrm{ab}\rightarrow \prod_{\alpha\in G}\langle \alpha\rangle$ is an injection. 

For $\alpha \in G$, the morphism of groups $G^\mathrm{ab}\rightarrow \langle \alpha\rangle$ is not canonical, it is a map on the Pontryagin's duals of the groups, it is the map $\mathrm{Hom}(G, \QQ/\ZZ)\rightarrow \mathrm{Hom}(\langle \alpha\rangle, \QQ/\ZZ)$ given by restriction.  By virtue of $\Sh^2(\Lambda)$ being $p$-torsion we can restrict the computation to elements $\alpha$ of $G^\mathrm{ab}[p]$.

We denote the isomorphism $G^\mathrm{ab} \rightarrow \mathrm{Hom}(G, \QQ/\ZZ)$ by $g\mapsto t_{g}$. This morphism depends on the choice of presentation of $G^{\mathrm{ab}}$. We can rewrite $\mathrm{Ker}(G^\mathrm{ab} \rightarrow \langle \alpha\rangle) = \{g\in G^{\mathrm{ab}} : t_{g}(\alpha) = 0\}$.

\begin{example}\label{ex:quat:computations}
If $p=2$ and $G$ is the quaternion group  $Q_8$, then all proper subgroups of $G$ are cyclic, and all nontrivial subgroups contain $\iota$. Write $G = \langle \alpha, \beta\rangle$ with $\alpha^2 = \beta^2  = \iota$, $\beta\alpha\beta^{-1} = \alpha^{-1}$. Then $\mathrm{G}^{\mathrm{ab}} = (\ZZ/2\ZZ)^2.$ Every element of $G^{\mathrm{ab}}$ is $2$-torsion, and hence is sent to $2$-torsion elements of $\langle \alpha\rangle \times \langle\beta\rangle$, which is the subgroup $N\times N$. However, $\Sh^2_\mathscr{C}(\Lambda) = \mathrm{Ker}\left(G^{\mathrm{ab}}\rightarrow \langle \alpha\rangle/N\times \langle \beta\rangle/N  \right)$. By our previous point, every element is mapped into $N\times N$, and hence belongs to this kernel. Consequently $\Sh^2(\Lambda)$ is trivial if and only if $G$ appears as decomposition group, else $\Sh^2(\Lambda) = (\ZZ/2\ZZ)^2$.

Let $K$ be given by the polynomial $x^{8} + 68 x^{6} + 986 x^{4} + 4624 x^{2} + 4624$. Using the LMFDB \cite{lmfdb}, we know that this is a CM field with Galois group $Q_8$,  discriminant $2^{22}\cdot 17^{6}$, and both decomposition groups at ramified primes are cyclic, isomorphic to $\ZZ/4\ZZ$. Therefore, in this case, $\tau(\mathbf{T}) = \frac{2}{4} = \frac{1}{2}$. This is the smallest example of non-integral Tamagawa number for the tori we are interested in. If $|G|\leq 8$ and $G\neq Q_8$, then $\tau(\mathbf{T})\in \{1, 2\}$.
\end{example}

\subsection{Denominator in the abelian case}\label{denom:abel}
Here $G = G^{\mathrm{ab}}$ and $G_p$ is not cyclic. We can assume $G = G_p$. Indeed, $G$ will decompose in a direct sum of its Sylow subgroups, and cyclic subgroups contained in $q$-Sylow subgroups never contain $N$ when $p\neq q$. Therefore, the map $G_p\rightarrow \prod_{g\in G_\ell}\langle g\rangle$ is always trivial and does not contribute to $\Sh^2_\mathscr{C}(\Lambda)\subset G[p]\subset G_p$.

Write down $G = \bigoplus_{i=1}^r \left(\ZZ/p^i\ZZ\right)^{n_i}$ with $n_r\neq 0$ and let $\mathscr{C}=\mathscr{C}_0\sqcup \mathscr{C}_1$ be the collection of cyclic subgroups of $G$, where $\mathscr{C}_0$ is the collection of cyclic groups containing $N$. We can write $g\in G$ as $g = \sum_{i=1}^r \vec{\mathbf{m}}_i$ where $\vec{\mathbf{m}}_i\in \left(\ZZ/p^i\ZZ\right)^{n_i}$.	Let us denote the isomorphism $G\rightarrow \mathrm{Hom}(G,\QQ/\ZZ)$ via $g\mapsto t_{g}$ where 
\[t_{\sum_{i=1}^r \vec{\mathbf{m}}_i}\left(\sum_{i=1}^r \vec{\mathbf{r}}_i\right) = \sum_{i=1}^r \frac{\vec{\mathbf{m}}_i\cdot \vec{\mathbf{r}}_i}{p^{i}}\in \QQ/\ZZ .\]
Note that in particular $t_{g}(h) = t_{h}(g)$ for all $g,h\in G$.
The goal is to find $C = \langle g\rangle$ such that $\iota\notin C$ and $t_{\iota}(g)\neq 0$.
Since $\iota$ has order $p$, one can write $\iota = \sum_{i=1}^r p^{i-1}\vec{\mathbf{k}}_i$ where $\vec{\mathbf{k}}_i\in (\ZZ/p\ZZ^{n_i})$. Let $g =\sum_{i=1}^r \vec{\mathbf{m}}_i\in G$, we have 
\[t_{\iota}(g) = \sum_{i=1}^r \frac{p^{i-1}\vec{\mathbf{k}}_i\cdot \vec{\mathbf{m}}_i}{p^i}=\frac{1}{p}\sum_{i=1}^r \vec{\mathbf{k}}_i\cdot \vec{\mathbf{m}}_i. \]
In particular, we can take each $\vec{\mathbf{m}}_i\in (\ZZ/p\ZZ)^{n_i}$. More rigorously, we can use the projection $\ZZ/p^i\ZZ\rightarrow \ZZ/p\ZZ$, which yields a projection $\phi : G\rightarrow G[p] =   \left(\ZZ/p\ZZ\right)^{\sum_{i=1}^r n_i}$. We have $t_{\iota}(g) = t_{\iota}(\phi(g))$. Define $m : G\rightarrow \ZZ$ by 
\[m\left(\sum_{i=1}^r \vec{\mathbf{m}}_i\right) = \min\left(\{i\in \{1,\cdots, r\} : \vec{\mathbf{m}}_i \neq 0 \}\cup\{0\}\right),\]
it is invariant under the choice of basis. Note that we took $G$ non-cyclic,  hence $\sum_{i=1}^r n_i\geq 2$.

\begin{itemize}
	\item \emph{Case 1.} $n_{m(\iota)}\geq 2$. Take $g$ with $\vec{\mathbf{m}}_j= 0$ if $j\neq m(\iota)$, and $\vec{\mathbf{m}}_{m(\iota)}$ is some nonzero vector such that $\vec{\mathbf{k}}_{m(\iota)}\cdot \vec{\mathbf{m}}_{m(\iota)}\neq 0$ in $\ZZ/p\ZZ$, and $\vec{\mathbf{m}}_{m(\iota)}$ is not collinear to $\vec{\mathbf{k}}_{m(\iota)}$. This is easy to find, if $\vec{\mathbf{k}}_{m(\iota)}$ has two nonzero coordinates, take $\vec{\mathbf{m}}_{m(\iota)}$ with only a $1$ at those coordinates, and $0$ everywhere else. If $\vec{\mathbf{k}}_{m(\iota)}$ has only one nonzero coordinate, then take $\vec{\mathbf{m}}_{m(\iota)}$ to contain two $1$'s, one where $\vec{\mathbf{k}}_{m(\iota)}$ is supported, and one where it's not.  Since the two vectors are not collinear, we have $\iota\notin \langle g\rangle$ as desired.
	\item \emph{Case 2.} $n_{m(\iota)} = 1$ and $m(\iota)<r$. Here we repeat the same process as before, take $g$ such that $\vec{\mathbf{m}}_{m(\iota)} = (1)$. If $\iota\in \langle g\rangle$, which happens if $\vec{\mathbf{k}}_{j}=0$ for $j\neq m(\iota)$, then pick $\vec{\mathbf{m}}_{r}$ to be a vector with one $1$ and the rest $0$. We cannot have $\iota\in \langle g\rangle$ anymore because in this case $\vec{\mathbf{k}}_{r}=0$, and so to have $\iota = g^\ell$ we would need $p^{r}|\ell$, but that would make $g^\ell = 0$, which is absurd. Again we have found a suitable $g$.
	\item \emph{Case 3.} $n_{m(\iota)} = 1$ and $m(\iota)=r$. In this case, assume we found such a $g$, then we would need $\vec{\mathbf{m}}_{r}\neq 0$. Recall that we can take $\vec{\mathbf{m}}_{r} = (\ell)$ with $1\leq \ell\leq p-1$, and so $g$ has order $p^{r}$. In particular, ${p^{r-1}}g$ has order $p$, and since $n_r = 1$, $p^{r-1}$ annihilates all smaller factors of $G$, so $m({p^{r-1}}g)= r$ so ${p^{r-1}}g\in N$ and so $\langle g\rangle \cap N\neq \varnothing$. So $\iota \in \langle g\rangle$, and there are no valid choices for $g$.
\end{itemize}
We have proved: 

\begin{proposition} \label{prop:sha:abelian} Assume $G$ is abelian. If $G_p = \bigoplus_{i=1}^r \left(\ZZ/p^i\ZZ\right)^{n_i}$ with $n_r\neq 0$ and $\mathscr{C}$ is the collection of cyclic subgroups of $G$, we have $\Sh^2_\mathscr{C} (\Lambda)=0$ unless $G_p$ is not cyclic, $n_r=1$ and $N$ is equal to the $p$-torsion elements of the summand $\ZZ/p^r\ZZ$, in which case $\Sh^2_\mathscr{C} (\Lambda)\cong N$. 
\end{proposition}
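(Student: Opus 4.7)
The plan is to translate the computation of $\Sh^2_\mathscr{C}(\Lambda)$ into a question about Pontryagin duals of cyclic subgroups of $G_p$, and then carry out a case analysis based on where the central element $\iota$ sits in the elementary divisor decomposition. First I would reduce to $G = G_p$: by Lemma \ref{lem:h1torsion} the group $\Sh^2_\mathscr{C}(\Lambda)$ is $p$-torsion, and under the identification $\hat{H}^2(G,\Lambda) \subset G^{\mathrm{ab}} = G$ from Theorem \ref{thm:h1h2}, it embeds into $G[p] \subset G_p$. Restriction maps to cyclic subgroups living in $\ell$-Sylows with $\ell \neq p$ are automatically trivial on $G[p]$, so only the cyclic subgroups of $G_p$ can contribute, and one may replace $G$ by $G_p$ throughout.

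Next I would apply Theorem \ref{thm:h1h2} to write
\[\Sh^2_\mathscr{C}(\Lambda) = \mathrm{Ker}\left(G_p \longrightarrow \prod_{\iota \notin \langle \alpha \rangle} \langle \alpha \rangle \times \prod_{\iota \in \langle \alpha \rangle} \langle \alpha \rangle / N\right),\]
and use the Pontryagin identification $g \mapsto t_g$ of $G_p$ with $\mathrm{Hom}(G_p,\QQ/\ZZ)$ to turn the membership condition into a statement about characters: $g$ lies in the kernel if and only if $t_g$ vanishes on every cyclic subgroup not containing $\iota$ and vanishes modulo $N$ on every cyclic subgroup containing $\iota$. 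The second family of conditions turns out to be harmless for elements of $G_p[p]$, so the real obstruction comes from detecting $g$ against cyclic subgroups that avoid $\iota$.

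Then I would focus on the candidate class represented by $\iota$ itself, which is the natural generator one hopes to see survive. Writing $\iota = \sum_{i=1}^{r} p^{i-1}\vec{k}_i$ with $\vec{k}_i \in (\ZZ/p\ZZ)^{n_i}$ and parameterizing a potential witness $\alpha = \sum_i \vec{m}_i$, the pairing becomes $t_\iota(\alpha) = \tfrac{1}{p}\sum_i \vec{k}_i \cdot \vec{m}_i$. The question reduces to whether one can produce $\alpha$ with $\iota \notin \langle\alpha\rangle$ and $t_\iota(\alpha) \neq 0$. I would split on the invariant $m(\iota) = \min\{i : \vec{k}_i \neq 0\}$: when $n_{m(\iota)} \geq 2$, pick $\vec{m}_{m(\iota)}$ noncollinear to $\vec{k}_{m(\iota)}$ with nonzero dot product; when $n_{m(\iota)} = 1$ but $m(\iota) < r$, add a component at level $r$ so that the resulting $\alpha$ escapes $\langle\iota\rangle$ while still pairing nontrivially; and when $n_{m(\iota)} = 1$ with $m(\iota) = r$, any $\alpha$ pairing nontrivially with $\iota$ necessarily contains $\iota$ in its cyclic subgroup, so no valid witness exists.

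The main obstacle will be the exceptional case $n_r = 1$, $m(\iota) = r$: one must show not only that $\iota$ contributes a nontrivial class, giving the inclusion $N \subset \Sh^2_\mathscr{C}(\Lambda)$, but also that no other element of $G_p[p]$ contributes a class outside $N$. For the latter, any nonzero $g$ not proportional to $\iota$ modulo $N$ must have nontrivial support in some summand of order strictly less than $p^r$, and one can then construct an $\alpha$ concentrated entirely in that complementary summand; such an $\alpha$ automatically satisfies $\iota \notin \langle\alpha\rangle$ while still pairing nontrivially with $g$. This should yield $\Sh^2_\mathscr{C}(\Lambda) \cong N$ in the exceptional case and $\Sh^2_\mathscr{C}(\Lambda) = 0$ in all others.
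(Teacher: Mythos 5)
Your plan reproduces the paper's own proof almost step for step: the same reduction to $G=G_p$ using that $\Sh^2_\mathscr{C}(\Lambda)$ is $p$-torsion, the same explicit Pontryagin pairing $g\mapsto t_g$ with $t_\iota(g)=\frac{1}{p}\sum_i \vec{\mathbf{k}}_i\cdot\vec{\mathbf{m}}_i$, and the identical trichotomy on $m(\iota)$ with the very same witnesses (a noncollinear vector when $n_{m(\iota)}\geq 2$, an added level-$r$ coordinate when $n_{m(\iota)}=1$ and $m(\iota)<r$, and the impossibility argument when $n_{m(\iota)}=1$ and $m(\iota)=r$). One caveat: where the paper tests only the class $t_\iota$ and obtains the containment $\Sh^2_\mathscr{C}(\Lambda)\subset N$ separately (Lemma \ref{lem:condition_sha} and the remark following it in \S\ref{denom:fullgen}), you prove that containment in place only in the exceptional case, so in the non-exceptional cases you should also produce a witness for every nonzero $\alpha\in G_p[p]\setminus N$ --- note that your complementary-summand trick needs a small modification there, since $\alpha$ may be supported exactly where $\iota$ is (e.g.\ $\alpha=\iota+e_2$ in $(\ZZ/p\ZZ)^2$ with $\iota=e_1$, where one instead takes the witness $g=e_2$, whose cyclic group has $p$-torsion different from $N$) --- because killing $t_\iota$ alone only shows $\Sh^2_\mathscr{C}(\Lambda)\cap N=0$.
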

 
The last condition corresponds to $n_r=1$ and $m(\iota) = r$, hence $N$ cannot be supported on any summand but  $\ZZ/p^{r}\ZZ$. This case is not common.
\begin{corollary}\label{cor:abeliannum} If $G$ is abelian then $\tau(\T) \in \{1, p\}$.
\end{corollary}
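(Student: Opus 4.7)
The plan is to combine Ono's formula with the numerator computation (the main theorem on $\hat H^1(k,\mathbf X^\star(\T))$) and the abelian denominator bound in Proposition \ref{prop:sha:abelian}, then do a small case split on whether $G_p$ is cyclic to rule out the value $1/p$.

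First I would write $\tau(\T)=|\hat H^1(G,\Lambda)|/|\Sh^2(\Lambda)|$ via Theorem \ref{thm:ono_formula} and Tate--Nakayama. By the main $\hat H^1$ theorem, the numerator equals $1$ if $G_p$ is cyclic and equals $p$ otherwise. By Proposition \ref{prop:sha:abelian}, since $G$ is abelian we have $\Sh^2(\Lambda)\subset \Sh^2_\mathscr{C}(\Lambda)\in\{0,N\}$, so the denominator is $1$ or $p$. Thus \emph{a priori} $\tau(\T)\in\{1/p,1,p\}$; the task reduces to eliminating the value $1/p$.

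Second, I would eliminate $1/p$ by observing that it could only occur when numerator is $1$ and denominator is $p$. But the numerator equals $1$ exactly when $G_p$ is cyclic, and in that case Corollary \ref{cor:gpcyclic} (combining Proposition \ref{prop:cyclic:h1} with Proposition \ref{prop:cyclic:denom}) already forces $\tau(\T)=1$. So whenever the numerator is $1$, the denominator is also $1$, and $1/p$ never occurs. In the remaining case $G_p$ non-cyclic, the numerator is $p$ and the denominator is $1$ or $p$, giving $\tau(\T)\in\{1,p\}$.

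I do not foresee any real obstacle: the substantive work is already contained in the main numerator theorem, in Proposition \ref{prop:cyclic:denom}, and in the freshly proved Proposition \ref{prop:sha:abelian}. This corollary is just a bookkeeping combination of those three inputs, together with the observation that the cyclic-$G_p$ case forces simultaneous triviality of numerator and denominator, which is precisely what rules out a fractional Tamagawa number in the abelian setting.
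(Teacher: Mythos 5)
Your proposal is correct and matches the paper's intended argument: the corollary is stated without proof as an immediate consequence of Proposition \ref{prop:sha:abelian} (whose nontrivial case $\Sh^2_\mathscr{C}(\Lambda)\cong N$ explicitly requires $G_p$ non-cyclic, hence numerator $p$) together with the numerator results and Corollary \ref{cor:gpcyclic}, which is exactly the bookkeeping you perform. Your explicit elimination of the value $1/p$ via the observation that a trivial numerator forces $G_p$ cyclic and hence a trivial denominator is precisely the point the paper leaves implicit.
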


\begin{example}\label{ex:z2z2} Assume $G = (\ZZ/2\ZZ)^2$. Here $r = 1$ and $n_2 = 2\neq 1$ so by Proposition \ref{prop:sha:abelian} we get $\Sh(\mathbf{T}) = 1$. 
\end{example}
\begin{example}\label{ex:z2z4} Assume $G = \mathrm{Gal}(K/k)\cong \ZZ/2\ZZ\times \ZZ/4\ZZ$ with $\iota = (0,2)$. The only noncyclic subgroup is $H = (\ZZ/2\ZZ)^2$. As it turns out, $H^2(H, \Lambda_1)\cong H$ and $\iota\in \mathrm{Ker}(G\rightarrow H)$, hence  if $G$ does not appear itself as a decomposition group, then $\Sh^1(\mathbf{T}) = N$ and $\tau(\mathbf{T}) = 1$. 	
\end{example}
\begin{example}\label{ex:split:abelian}
In the same spirit as previous examples, an immediate application of the proposition implies the triviality of $\Sh^1(\T)$ when $G$ is abelian and (\ref{ses:group}) splits.
\end{example}
\subsection{General case}\label{denom:fullgen}
The computation only depends on the $p$-torsion points of the abelianization of $G$, therefore we will assume without loss of generality that $G$ is a non-cyclic $p$-group.  

Let $G'$ denote the commutator subgroup of $G$.  

Let $\alpha\in G$ such that $\alpha\notin G'$ and $\alpha^p\in G'$ (we only care about the $p$-torsion points of $G^{\mathrm{ab}}$). We fix an isomorphism $G/G'\cong \mathrm{Hom}(G, \QQ/\ZZ)$ so that there is $g\in G$ such that $t_{\alpha}\in \langle t_{g}\rangle$ and $t_{\alpha}(g) \neq 0$. Indeed, to do so, we can write a presentation of $G^\mathrm{ab}$ as $\prod_i (\ZZ/p^{n_i}\ZZ)$ where $t_{\alpha}$ is only supported on one summand, then we can take the same morphism as in previous subsection and pick $g\in G$  such that $t_{g}$ is a generator of that summand.

If $\iota\notin \langle g\rangle$ then $\hat{H}^2(\langle g\rangle, \Lambda) = \langle g\rangle$, and the image of $\alpha$ under $\hat{H}^2(G, \Lambda)\rightarrow \hat{H}^2(\langle g\rangle, \Lambda)$ is not trivial,  therefore $t_{\alpha}\notin \Sh_\mathscr{C}^2(\Lambda)$. If $\iota\in g$, then since $t_{\alpha}$ is $p$-torsion, so is its image under the restriction map $G^\mathrm{ab}\rightarrow \langle g\rangle$, and  $\langle g \rangle$ is cyclic, hence has a unique subgroup of $p$-torsion elements, hence $t_{\alpha}$ is sent into $N\subset \langle g\rangle$. By Theorem \ref{thm:h1h2} we have $\hat{H}^2(\langle g\rangle, \Lambda) = \langle g\rangle/N$, hence $t_{\alpha}$ is sent to $0$ via the restriction map. As a side note, this further explains why we only need to cover $p$-torsion elements, since the rest need not be sent into $p$-torsion elements of $\langle g\rangle$.

Again, by normality of $N$ and unicity of $p$-torsion elements in cyclic groups, we have that $\iota\in \langle g\rangle$ if and only if $\iota\in \langle \alpha\rangle$. Also by normality of $N$ in $G$, if $\iota$ belongs to the cyclic group generated by $\alpha$, it belongs to the cyclic group generated by any conjugate of $\alpha$, so the condition $\iota\in \langle \alpha\rangle$ only depends on $t_{\alpha}$ and not the choice of lift. 

Let us restate those observations in a couple lemmas. 

\begin{lemma}\label{lem:condition_sha} Assume $G_p$ is non-cyclic.  Fix $\alpha \in G$ with image $t_{\alpha}$ in $G^\mathrm{ab} \supset\Sh^2(\Lambda)$. If $t_{\alpha}\in \Sh^2(\Lambda)$, then $\alpha^p\in G'$ and $\iota\in \langle \alpha\rangle$.  
\end{lemma}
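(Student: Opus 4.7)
The approach is to verify the two conditions $\alpha^p \in G'$ and $\iota \in \langle \alpha \rangle$ separately, leveraging the cohomological setup already in place.

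The first condition $\alpha^p \in G'$ says precisely that $t_\alpha$ is $p$-torsion in $G^{\mathrm{ab}}$, and this is essentially automatic. By Lemma \ref{lem:h1torsion}, $\Sh^1(\T(K))$ is $p$-torsion, and via Tate--Nakayama duality this transfers to $\Sh^2(\Lambda)$ being $p$-torsion as a subgroup of $\hat H^2(G,\Lambda) = G^{\mathrm{ab}}$ (cf.\ Theorem \ref{thm:h1h2} applied to $S = G$, noting $\iota \in G$ and $G_p$ noncyclic, so the third bullet gives $\hat H^2(G,\Lambda) = G^{\mathrm{ab}}$). Hence the image $t_\alpha$ is killed by $p$, which is exactly $\alpha^p \in G'$.

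For the second condition I would argue by contrapositive. Assume $\iota \notin \langle \alpha \rangle$. The plan is to produce a single cyclic subgroup $C = \langle g \rangle$ on which the restriction of $t_\alpha$ is nonzero, contradicting $t_\alpha \in \Sh^2_{\mathscr{C}}(\Lambda) \supset \Sh^2(\Lambda)$. The recipe is supplied by the discussion immediately preceding the lemma: fix a presentation $G^{\mathrm{ab}} = \bigoplus_i \ZZ/p^{n_i}\ZZ$ in which $t_\alpha$ is supported on a single summand $\ZZ/p^n\ZZ$, and pick $g \in G$ whose image $t_g$ generates this summand; then $t_\alpha(g) \neq 0$ by construction. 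Once we know that $\iota \notin \langle g \rangle$, Theorem \ref{thm:h1h2} yields $\hat H^2(\langle g \rangle, \Lambda) = \langle g \rangle$, and the restriction of $t_\alpha$ to $\langle g \rangle$ is nonzero since its value at $g$ is nonzero.

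The step I expect to be the real obstacle is the implication $\iota \notin \langle \alpha \rangle \Rightarrow \iota \notin \langle g \rangle$: the lift $g$ of $t_g$ is not unique, and different lifts can produce different cyclic subgroups. The plan for this step is to exploit that $N = \langle \iota \rangle$ is central of prime order, so $t_\iota \in G^{\mathrm{ab}}$ is either zero or generates the unique order-$p$ subgroup of its summand. Since $t_\alpha$ and $t_g$ live in the same cyclic summand of $G^{\mathrm{ab}}$, containment of $t_\iota$ in $\langle t_\alpha \rangle$ is equivalent to containment in $\langle t_g \rangle$; the centrality of $\iota$ is what allows this equivalence on $G^{\mathrm{ab}}$ to be promoted to an equivalence in $G$ (the conjugation action on $\langle h \rangle$ fixes $\iota$, so $\iota \in \langle h \rangle$ depends only on the $G'$-coset of $h$ in the relevant summand). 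Granting this, the contrapositive argument closes.
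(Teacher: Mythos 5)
Your first half is correct and coincides with the paper's argument: $\Sh^2(\Lambda)$ is $p$-torsion by Lemma \ref{lem:h1torsion} together with Tate--Nakayama duality, and it sits inside $\hat{H}^2(G,\Lambda)=G^{\mathrm{ab}}$ by Theorem \ref{thm:h1h2}, so $t_\alpha\in G^{\mathrm{ab}}[p]$, i.e.\ $\alpha^p\in G'$. Your contrapositive strategy for the second condition is also the paper's: produce $g$ with $t_\alpha(g)\neq 0$ and $\iota\notin\langle g\rangle$, and use the first bullet of Theorem \ref{thm:h1h2} to see that the restriction of $t_\alpha$ to $\hat{H}^2(\langle g\rangle,\Lambda)=\langle g\rangle$ is nonzero, contradicting $t_\alpha\in\Sh^2_{\mathscr{C}}(\Lambda)$.

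The gap is exactly in the step you flag as the obstacle, and your proposed fix does not work. You want to detect $\iota\in\langle g\rangle$ through $t_\iota\in G^{\mathrm{ab}}$, asserting that $t_\iota$ ``is either zero or generates the unique order-$p$ subgroup of its summand'' and that membership of $t_\iota$ in $\langle t_g\rangle$ can be promoted to membership of $\iota$ in $\langle g\rangle$. But precisely in the difficult case $N\subset G'$ (which the paper singles out as ``the only difficulty of the computation,'' realized by $Q_8$ and $D_4$) one has $t_\iota=0$, so $t_\iota$ lies in $\langle t_g\rangle$ for \emph{every} $g$, while $\iota\in\langle g\rangle$ genuinely varies with $g$: in $D_4=\langle a,b\rangle$ with $\iota=a^2$ we have $\iota\in\langle a\rangle$ but $\iota\notin\langle b\rangle$. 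So no criterion phrased in terms of $t_\iota$ on $G^{\mathrm{ab}}$ can distinguish these cases (and in general $t_\iota$ need not even be supported on a single summand of a decomposition chosen to isolate $t_\alpha$). Your ``promotion'' mechanism is likewise a non sequitur: centrality of $\iota$ gives $\iota\in\langle h\rangle\iff\iota\in\langle khk^{-1}\rangle$, but two lifts of the same class in $G^{\mathrm{ab}}$ differ by an \emph{arbitrary} element of $G'$ and need not be conjugate; indeed the condition is not constant on $G'$-cosets in general --- in $\ZZ/3\ZZ\wr\ZZ/3\ZZ$ with $\iota=(1,1,1)$ central (and lying in $G'$), the elements $\iota$ and $(2,0,1)=\iota+(1,2,0)$ lie in the same $G'$-coset, yet $\iota\notin\langle(2,0,1)\rangle$. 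The paper closes this step differently: it derives the equivalence $\iota\in\langle g\rangle\iff\iota\in\langle\alpha\rangle$ inside $G$ itself, from the normality of $N$ together with the uniqueness of the subgroup of order $p$ in a cyclic $p$-group (the restriction map sends the $p$-torsion class $t_\alpha$ into the unique order-$p$ subgroup of $\langle g\rangle$, which is $N$ exactly when $\iota\in\langle g\rangle$), invoking conjugation-invariance only to check that the condition $\iota\in\langle\alpha\rangle$ is well posed. Without a correct proof of that equivalence, your contrapositive does not close.
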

{
\begin{lemma}\label{condition_sha_sufficient} Fix $\alpha\in G$ satisfying the conditions of Lemma \ref{lem:condition_sha}. We have $t_{\alpha}\notin \Sh^2_\mathscr{C}(\Lambda)$ if and only if and there is $t_{g}\in G^{\mathrm{ab}}$ such that $t_{\alpha}(g)\neq 0$ and $\iota\notin \langle g\rangle$.
\end{lemma}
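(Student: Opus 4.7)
The plan is to reduce the question to a case analysis on each cyclic subgroup $C = \langle g\rangle \leq G$, exploiting the explicit computation of $\hat{H}^2(C,\Lambda)$ from Theorem \ref{thm:h1h2}. By definition $\Sh^2_\mathscr{C}(\Lambda)$ is the kernel of the product of restriction maps $\mathrm{res}_{G,C}\colon \hat{H}^2(G,\Lambda)\to \hat{H}^2(C,\Lambda)$ over all cyclic $C\leq G$, so $t_\alpha\notin \Sh^2_\mathscr{C}(\Lambda)$ exactly when some such restriction is nontrivial. The two cases of Theorem \ref{thm:h1h2}, depending on whether $\iota\in C$, behave quite differently, and I would treat them separately.

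When $\iota\in C$, Theorem \ref{thm:h1h2} gives $\hat{H}^2(C,\Lambda)=C/N$, and by naturality of the long exact sequence associated to $0\to \mathbb{Z}\to \Lambda\to \Lambda_1\to 0$ (the same sequence used in the proof of Theorem \ref{thm:h1h2}), the restriction $\hat{H}^2(G,\Lambda)\to \hat{H}^2(C,\Lambda)$ factors through $G^{\mathrm{ab}}\to C\to C/N$, where the first arrow is the ordinary restriction on $\hat{H}^2(-,\mathbb{Z})$. Lemma \ref{lem:condition_sha} forces $t_\alpha$ to be $p$-torsion, so its image in $C$ is $p$-torsion; since $C$ is cyclic and contains $\iota$ of order $p$, the unique subgroup of order $p$ in $C$ is $N$, so the image lies in $N$ and vanishes in $C/N$. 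Hence cyclic subgroups containing $\iota$ never contribute an obstruction, which explains why only $g$ with $\iota\notin\langle g\rangle$ can appear in the sought criterion.

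When $\iota\notin C$, Theorem \ref{thm:h1h2} gives $\hat{H}^2(C,\Lambda)=C$, and the restriction $\hat{H}^2(G,\Lambda)=G^{\mathrm{ab}}\to \hat{H}^2(C,\Lambda)=C$ coincides with the restriction on $\hat{H}^2(-,\mathbb{Z})$ by the same naturality argument. Under the Pontryagin duality $\hat{H}^2(S,\mathbb{Z})\cong\mathrm{Hom}(S,\mathbb{Q}/\mathbb{Z})$ used throughout the proof of Theorem \ref{thm:h1h2}, this restriction is simply $t_\alpha\mapsto t_\alpha|_C$, which is nonzero if and only if $t_\alpha(g)\neq 0$ for a (equivalently, any) generator $g$ of $C$. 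Combining the two cases produces the claimed equivalence: $t_\alpha\notin \Sh^2_\mathscr{C}(\Lambda)$ iff there exists $g\in G$ with $\iota\notin \langle g\rangle$ and $t_\alpha(g)\neq 0$.

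The main obstacle is confirming that the identifications of $\hat{H}^2(G,\Lambda)$ with $G^{\mathrm{ab}}$ and of $\hat{H}^2(C,\Lambda)$ with $C$ or $C/N$ made in Theorem \ref{thm:h1h2} are sufficiently natural that the restriction maps commute with the corresponding restrictions on $\hat{H}^2(-,\mathbb{Z})$. This follows from functoriality of the connecting homomorphisms in the long exact cohomology sequences coming from $0\to L\to L_1\to \mathbb{Z}\to 0$ and $0\to \mathbb{Z}\to \Lambda\to \Lambda_1\to 0$, but it is the one technical point that must be checked carefully before invoking the computation to conclude.
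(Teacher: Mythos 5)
Your proof is correct and takes essentially the same route as the paper: the paper's proof of Lemma \ref{condition_sha_sufficient} is exactly the discussion preceding it in \S\ref{denom:fullgen}, which performs the same case analysis over cyclic subgroups using Theorem \ref{thm:h1h2}, identifies the restriction maps as restrictions on $\mathrm{Hom}(-,\QQ/\ZZ)$, and notes that when $\iota\in\langle g\rangle$ the $p$-torsion image of $t_\alpha$ lands in the unique order-$p$ subgroup $N$ and hence vanishes in $\langle g\rangle/N$. The naturality issue you flag at the end is precisely the point the paper treats implicitly when it declares the (non-canonical) maps $G^{\mathrm{ab}}\to\langle g\rangle$ to be the Pontryagin-dual restriction maps.
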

\begin{proof} This follows immediately from the discussion above and Theorem \ref{thm:h1h2}
\end{proof}

\begin{example} We have computed $\Sh^2_\mathscr{C}(\Lambda)$ in Example \ref{ex:quat:computations}. We have seen that for every $g\in G$ we have $t_{g}\in \Sh^2_\mathscr{C}(\Lambda)$. We can check that for every $g\in G$ we have $g^2\in \langle \iota\rangle = G'$ so both conditions hold. 
\end{example}

\begin{rem}
Assume $G$ is abelian, i.e. $G'=\{1\}$. The conditions of Lemma \ref{lem:condition_sha} become: $\alpha$ is $p$-torsion and $\iota\in \langle\alpha\rangle$. In particular $N\subset \langle \alpha\rangle$ and the two groups have the same size, hence the condition can be reduced to $\alpha\in N$. We therefore recover the observation of the previous subsection that $\Sh^2(\Lambda)\subset N$ in the abelian case. Therefore, if $G$ is abelian then $|\Sh^2(\Lambda)| \in \{1, p\}.$  
\end{rem}

The abelian case already shows that the conditions of Lemma \ref{lem:condition_sha} are only necessary and not sufficient. This is because in the choice of $g\in G$ such that $t_{\alpha}(g)\neq 0$ we have required so far that $t_{\alpha}\in \langle t_{g}\rangle$. We now give a non-abelian example.

\begin{example}\label{ex:d8}
Assume $p=2$ and $G$ is the dihedral group $D_4$. We write a presentation of $D_4 = \langle \alpha,\beta\rangle$ with $\alpha^4=\beta^2=1$, and $\beta\alpha\beta = \alpha^3$. There is only one choice of $N$, that is $N = Z(G)$ and $\iota= \alpha^2$. Similarly to Example \ref{ex:quat:computations}, we have $G'=\langle\iota\rangle$ and $G^\mathrm{ab}= (\ZZ/2\ZZ)^2$. We choose a presentation sending $\alpha$ to $(1, 0)$ and $\beta$ to $(0, 1)$, with the same identification with $\mathrm{Hom}(G, \QQ/\ZZ)$ as in the previous subsection.  

In this case $\iota\notin \langle\beta\rangle$ so $\beta\notin \Sh^2(\Lambda)$. Also, $\alpha$ satisfies the conditions of \ref{lem:condition_sha}, but $t_{\alpha}(\alpha\beta)=1/2$, and $\iota\notin \langle\alpha\beta\rangle$, hence $\alpha\notin \Sh^2(\Lambda)$ either. We have just shown that $\Sh^2(\Lambda) = 0$ and if $\mathrm{Gal}(K/k) = G$ then $\tau(\mathbf{T}) = 2$.

Also note that $D_4$ and $Q_8$ are the only nonabelian groups of order $8$ so we have computed all possible Tamagawa numbers for $|G|=8$.  Only two of those groups have potentially nontrivial denominator, those groups are $Q_8$ and $\ZZ/2\ZZ\times \ZZ/4\ZZ$ where $N$ is respectively $Z(Q_8)$ and $\langle (0,2)\rangle$.
\end{example}

\begin{proposition}\label{prop:denom:iotanotin}
If $\iota\notin G'$ then $\Sh^2_{\mathscr{C}}(\Lambda)$ only depends on $G^\mathrm{ab}$, and can be computed by Proposition \ref{prop:sha:abelian}. In particular, $\Sh^2(\Lambda)\subset N$.
\end{proposition}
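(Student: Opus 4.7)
My plan is to combine the necessary and sufficient conditions from Lemmas~\ref{lem:condition_sha} and~\ref{condition_sha_sufficient} with the structural consequence that $\bar\iota$ has order exactly $p$ in $G^{\mathrm{ab}}$ whenever $\iota\notin G'$, so that everything visible to $\Sh^2_{\mathscr{C}}(\Lambda)$ already lives in $\bar N\subset G^{\mathrm{ab}}$.

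First I would establish the "in particular" clause. By Lemma~\ref{lem:condition_sha}, any $t_\alpha\in\Sh^2(\Lambda)\subseteq\Sh^2_{\mathscr{C}}(\Lambda)$ satisfies $\alpha^p\in G'$ and $\iota\in\langle\alpha\rangle$. The first forces $\bar\alpha$ into $G^{\mathrm{ab}}[p]$ and the second forces $\bar\iota\in\langle\bar\alpha\rangle$. Under $\iota\notin G'$, $\bar\iota$ has order exactly $p$, so the cyclic subgroup $\langle\bar\alpha\rangle$ of $G^{\mathrm{ab}}[p]$ must equal $\langle\bar\iota\rangle=\bar N$. This gives $\bar\alpha\in\bar N$, and hence $\Sh^2(\Lambda)\subset\bar N\cong N$.

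For the main statement, I would argue that the characterization of $\Sh^2_{\mathscr{C}}(\Lambda)$ from Lemma~\ref{condition_sha_sufficient} depends only on the pair $(G^{\mathrm{ab}},\bar N)$. That lemma, combined with the first step, says $t_\alpha\in\Sh^2_{\mathscr{C}}(\Lambda)$ iff $\bar\alpha\in\bar N$ and $t_\alpha(g)=0$ for every $g\in G$ with $\iota\notin\langle g\rangle$. Since $t_\alpha(g)$ only depends on $\bar g$, the condition becomes $t_\alpha(\bar g)=0$ for every $\bar g$ in $U:=\{\bar g\in G^{\mathrm{ab}}:\text{some lift }g\text{ has }\iota\notin\langle g\rangle\}$. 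The inclusion $U\supseteq\{\bar g:\bar\iota\notin\langle\bar g\rangle\}$ is immediate, since $\iota\in\langle g\rangle$ would imply $\bar\iota\in\langle\bar g\rangle$; the reverse inclusion, which is what is needed to identify the $G$-computation with the abelian one for $(G^{\mathrm{ab}},\bar N)$, amounts to showing that whenever $\bar\iota\in\langle\bar g\rangle$ the resulting constraint $t_\alpha(\bar g)=0$ is already implied, for characters with $\bar\alpha\in\bar N$, by the constraints indexed by the subset $\{\bar g:\bar\iota\notin\langle\bar g\rangle\}$. Once this is done, the characterization of $\Sh^2_{\mathscr{C}}(\Lambda)$ matches the one for the abelian pair $(G^{\mathrm{ab}},\bar N)$, and Proposition~\ref{prop:sha:abelian} applies directly.

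The main obstacle will be the reverse inclusion above: controlling the "bad" lifts, i.e.\ those $g\in G$ whose cyclic subgroup misses $\iota$ even though $\bar\iota\in\langle\bar g\rangle$. This is precisely where the hypothesis $\iota\notin G'$ is indispensable; it keeps $\bar N$ nontrivial and ensures that $\bar\alpha\in\bar N$ provides enough structure for the commutator contribution to dissolve when paired with such a $\bar g$, so that no additional vanishing beyond the abelian one is imposed.
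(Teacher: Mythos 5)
Your first step is correct and coincides with the paper's: by Lemma~\ref{lem:condition_sha} any nonzero $t_\alpha\in\Sh^2_{\mathscr{C}}(\Lambda)$ has $\alpha^p\in G'$ and $\iota\in\langle\alpha\rangle$, and since $\iota\notin G'$ this pins $t_\alpha$ to $\bar N$. (The paper in fact gets slightly more, working in $G$ rather than $G^{\mathrm{ab}}$: replacing $\alpha$ by its $p$-part, if $\alpha^p\neq 1$ then the unique order-$p$ subgroup of $\langle\alpha\rangle$ lies in $\langle\alpha^p\rangle\subseteq G'$, contradicting $\iota\notin G'$; hence $\langle\alpha\rangle=N$ exactly. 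Your $G^{\mathrm{ab}}$ version reaches the same conclusion $\Sh^2(\Lambda)\subset N$.) Your second step also frames the remaining problem exactly as the paper does: run the search of \S\ref{denom:abel} for $\beta$ with $t_\alpha(\beta)\neq 0$ and $\iota\notin\langle\beta\rangle$, and the only issue is whether the condition $\iota\in\langle g\rangle$, which in Lemma~\ref{condition_sha_sufficient} is a condition on an actual element $g\in G$, can be tested on classes in $G^{\mathrm{ab}}$.

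The genuine gap is that you never prove the step you correctly identify as the crux: your final paragraph merely asserts that $\iota\notin G'$ makes ``the commutator contribution dissolve,'' with no argument. This is not a routine verification. A lift $g$ of a class $\bar g$ with $\bar\iota\in\langle\bar g\rangle$ can perfectly well have $\iota\notin\langle g\rangle$ even when $\iota\notin G'$, because the unique order-$p$ element of $\langle g\rangle$ may differ from $\iota$ by a nontrivial element of $G'$. Concretely, take $p=2$, $G=Q_8\times\ZZ/4\ZZ$, $\iota=(1,2)$: here $G'=\{\pm 1\}\times\{0\}$, so $\iota\notin G'$ is central, and $G^{\mathrm{ab}}=(\ZZ/2\ZZ)^2\times\ZZ/4\ZZ$ with $\bar\iota$ the $2$-torsion of the $\ZZ/4\ZZ$-summand, i.e.\ precisely the exceptional shape of Proposition~\ref{prop:sha:abelian}; yet the element $g=(i,1)$ satisfies $t_{\bar\iota}(g)=\tfrac12\neq 0$ while $g^2=(-1,2)\neq\iota$, so the cyclic group $\langle g\rangle$ of order $4$ does not contain $\iota$, and by the element-level criterion underlying Lemma~\ref{condition_sha_sufficient} (restriction to $\hat{H}^2(\langle g\rangle,\Lambda)=\langle g\rangle$ via Theorem~\ref{thm:h1h2}) this $g$ imposes a vanishing constraint invisible in $G^{\mathrm{ab}}$, killing the candidate class. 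So the constraint set is \emph{not} automatically exhausted by the abelian one, and your claimed ``reverse inclusion'' fails for such lifts; note the paper's own proof is equally silent here, leaning on its earlier assertion that ``$\iota\in\langle\alpha\rangle$ only depends on $t_\alpha$,'' which was justified only for conjugate lifts, whereas lifts differ by arbitrary elements of $G'$ (as the example shows). Either such lifts must be excluded by a genuine argument absent from your outline, or the reduction to $G^{\mathrm{ab}}$ itself has to be re-examined for groups like this one; in either case the decisive step of your proposal is missing, so it does not constitute a proof.
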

\begin{proof}
In order to have both $\alpha^p\in G'$ and $\iota\in \langle \alpha\rangle$, we need $\alpha^p=0$, and therefore $|\langle \alpha\rangle| = p = |N|$, thus $N = \langle\alpha\rangle$ and we are left to find an element  $\beta\in G^\mathrm{ab}$ such that $t_{\alpha}(\beta)\neq 0$ as in the previous subsection. 
\end{proof}
\begin{rem}
Applying Proposition \ref{prop:denom:iotanotin} comes with one crucial caveat. In the previous section, we assume $G$ abelian and $G_p$ not cyclic. This is due to us knowing the case when $G_p$ is cyclic. However, it is possible to have $G$ not abelian with $G_p$ non cyclic, but the $p$-Sylow of $G^\mathrm{ab}$ is cyclic (and containing the nontrivial projection of $N$). In that case, the computations done in the previous section remain true and we obtain $\Sh^2_\mathscr{C}(\Lambda) = N$ (since, with notations of that section, $n_r = 1$ and $m(\iota)=r$). In order for $\Sh^2(\Lambda)$ to be trivial in that case, we need a decomposition group whose $p$-Sylow is not cyclic and whose projection on $G^\mathrm{ab}$ is onto. In general, we will see in Proposition \ref{prop:full:sha:abelian}, that if $\Sh^2_\mathscr{C}(\Lambda) = \ZZ/p\ZZ$, then $\Sh^2(\Lambda) = 0$ if and only if there is a subgroup with non-cyclic $p$-Sylow subgroups such that the projection on the summand of the $p$-Sylow subgroup of $G^\mathrm{ab}$ containing $N$ is onto.
\end{rem}

\begin{rem}Proposition \ref{prop:denom:iotanotin} covers in particular the case where (\ref{ses:group}) splits. 
\end{rem}

The only difficulty of the computation comes therefore when $N\subset G'$. As shown in Example \ref{ex:quat:computations}, we can have $|\Sh^2(\Lambda)|>|N|$.

For any $\alpha\in G$ let us write $I_\alpha = \{g\in G : t_{\alpha}(g)\neq 0\}$ and $\dot{I}_\alpha = \cap_{g\in I_\alpha} \langle g\rangle$. We can rewrite Lemmas \ref{lem:condition_sha} and \ref{condition_sha_sufficient} as: $\alpha\in \Sh^2_\mathscr{C}(\Lambda)$ if and only if $\alpha^p\in G'$ and $\iota\in \dot{I}_\alpha$. As we saw, this implies in particular,  that $\iota\in \langle \alpha\rangle$.
\begin{proposition}
Assume there is $0\neq t_{\alpha} \in \Sh^2_\mathscr{C}(\Lambda)$. Then $t_{\alpha}\in \Sh^2(\Lambda)$ if and only if $\{D\in \mathscr{D} : D_p\text{ is not cyclic, }D\cap I_\alpha \neq \varnothing \}=\varnothing$. In other words, $t_{\alpha}\notin \Sh^2(\Lambda)$ if and only if there is a ramified prime $\mathfrak{p}\in K$ such that the corresponding decomposition group $G(\mathfrak{p})$ has non-cyclic $p$-Sylow subgroup and contains some element of $I_\alpha$. 
\end{proposition}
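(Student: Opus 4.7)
The plan is to use the identity $\Sh^2(\Lambda) = \Sh^2_\mathscr{C}(\Lambda) \cap \Sh^2_\mathscr{D}(\Lambda)$ recorded in Section \ref{denom:general}. Since the hypothesis is that $t_\alpha$ already lies in $\Sh^2_\mathscr{C}(\Lambda)$, the question reduces to deciding, for each non-cyclic decomposition group $D \in \mathscr{D}$, whether the restriction of $t_\alpha$ to $\hat{H}^2(D, \Lambda)$ vanishes. I would split this analysis into two cases according to whether the $p$-Sylow subgroup $D_p$ is itself cyclic, since Theorem \ref{thm:h1h2} gives a cleaner description of $\hat{H}^2(D, \Lambda)$ precisely in the non-cyclic case.

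If $D_p$ is cyclic, my claim is that $t_\alpha|_D$ is automatically zero. Since $\Sh^2(\Lambda)$ is $p$-torsion (by Lemma \ref{lem:h1torsion} together with Tate--Nakayama duality) and we are only asking for membership of $t_\alpha$ in $\Sh^2(\Lambda)$, one may reduce to the case where $t_\alpha$ is $p$-torsion in $\hat{H}^2(G, \Lambda)$. Because $D_p$ is cyclic it belongs to $\mathscr{C}$, so by hypothesis the restriction of $t_\alpha$ to $\hat{H}^2(D_p, \Lambda)$ vanishes. Factoring this as $\hat{H}^2(G,\Lambda) \to \hat{H}^2(D,\Lambda) \to \hat{H}^2(D_p,\Lambda)$ and invoking corestriction--restriction on the second arrow, the composition with corestriction equals multiplication by $[D:D_p]$, which is coprime to $p$. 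This composition is therefore an isomorphism on the $p$-torsion subgroup, so the second restriction is injective on $p$-torsion, forcing $t_\alpha|_D = 0$.

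If $D_p$ is not cyclic, Theorem \ref{thm:h1h2} yields $\hat{H}^2(D, \Lambda) = D^{\mathrm{ab}}$ in both sub-cases ($\iota \in D$ and $\iota \notin D$). Under the identification $D^{\mathrm{ab}} \cong \mathrm{Hom}(D, \QQ/\ZZ)$ used throughout Section \ref{denom:general}, the restriction map corresponds to literal restriction of homomorphisms, so $t_\alpha|_D = 0$ if and only if $t_\alpha$ vanishes on every element of $D$, equivalently $D \cap I_\alpha = \varnothing$. Combining the two cases recovers the stated criterion, and the rephrasing in terms of ramified primes follows from Chebotarev density (as used in Proposition \ref{prop:cyclic:denom}): every cyclic subgroup of $G$ already appears as an unramified decomposition group, so non-cyclic decomposition groups can only come from ramified primes.

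The main obstacle will be careful bookkeeping in the first case when $\iota \in D$, where Theorem \ref{thm:h1h2} identifies $\hat{H}^2(D, \Lambda)$ with the quotient $D^{\mathrm{ab}}/N$ rather than with $D^{\mathrm{ab}}$ itself; one must check that corestriction--restriction still produces multiplication by $[D:D_p]$ after passing to this quotient, which is standard since both operations are natural on Tate cohomology of any $G$-module and $D_p$ necessarily contains $N$. Beyond that verification, the proof reduces to combining the two case analyses with the description $\Sh^2(\Lambda) = \Sh^2_\mathscr{C}(\Lambda) \cap \Sh^2_\mathscr{D}(\Lambda)$ and the Chebotarev observation about ramified versus unramified decomposition groups.
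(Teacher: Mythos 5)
Your proof is correct and takes essentially the same route as the paper: the paper's one-line argument handles the decisive case by restricting to a decomposition group $D$ with $D_p$ non-cyclic, where Theorem \ref{thm:h1h2} gives $\hat{H}^2(D, \Lambda) = D^{\mathrm{ab}}$ and the restriction is dual restriction of characters, nonzero exactly when $D \cap I_\alpha \neq \varnothing$. The extra steps you spell out --- the restriction--corestriction argument showing $t_{\alpha}|_D = 0$ whenever $D_p$ is cyclic (including the harmless bookkeeping when $\hat{H}^2(D,\Lambda) = D^{\mathrm{ab}}/N$), and the Chebotarev remark about ramified primes --- are precisely the facts the paper leaves implicit, drawing on Proposition \ref{prop:cyclic:denom} and the discussion in \S\ref{denom:general}.
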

\begin{proof}
This is simply because if $S\in \{D\in \mathscr{D} : D_p\text{ is not cyclic, }D\cap I_\alpha \neq \varnothing \}$ then by Theorem \ref{thm:h1h2} we get that the image of $\alpha$ under the restriction map  $\hat{H}^2(G, \Lambda)\rightarrow \hat{H}^2(S, \Lambda) = S^\mathrm{ab}$ is nonzero since it is supported on some $g\in I_\alpha\cap S$.
\end{proof}

\begin{example} Let $G = \ZZ/2\ZZ\times \ZZ/4\ZZ$ and $\iota = (0,2)$.
We have seen in Example \ref{ex:z2z4} that $I_\iota = \{(1,1), (0,1)\}$ and therefore $\dot{I}_\iota = N$. In order to have $\Sh^2(\Lambda) = 0$ we would need to have a non-cyclic decomposition group containing $(1,1)$ or $(0,1)$. For either one of these elements, the only non-cyclic group containing them is $G$, so $\Sh^2(\Lambda)=0$ if and only if $K/k$ contains an inert prime.
\end{example}

This argument together with Proposition \ref{prop:sha:abelian} yields a full description of the triviality of $\Sh^2(\Lambda)$ in the case where $G$ is abelian.
\begin{proposition}\label{prop:full:sha:abelian}
If $G$ is Galois and abelian, and $\Sh^2_\mathscr{C} (\Lambda) \neq 0$, then write $G_p = \prod_{k=1}^\ell \ZZ/p^{i_k}\ZZ$ with $\ell>1$ and $i_{\ell}>\max(i_1,\cdots,i_{\ell-1})$  with $N\subset \ZZ/p^{\ell}\ZZ$ according to Proposition \ref{prop:sha:abelian}. Then $\Sh^2(\Lambda) = 0$ if and only if there is a ramified prime for $K/k$ with non-cyclic decomposition group  $H$ such that the projection of $H$ on the summand $\ZZ/p^{i_\ell}\ZZ$ is onto.

This method also computes $\Sh^2(\Lambda)$ when $\iota$ does not belong to the derived subgroup $G'$ according to Proposition \ref{prop:denom:iotanotin}. 
\end{proposition}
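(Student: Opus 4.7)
I will apply the characterization from the preceding proposition to the generator $t_\iota$ of $\Sh^2_\mathscr{C}(\Lambda)$, reducing the abstract condition $D\cap I_\iota\neq\varnothing$ to the concrete projection condition in the statement via a short pairing calculation.

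First, by Proposition~\ref{prop:sha:abelian}, the hypothesis $\Sh^2_\mathscr{C}(\Lambda)\neq 0$ forces $\Sh^2_\mathscr{C}(\Lambda)\cong N=\langle t_\iota\rangle$, so every nonzero class is a scalar multiple of $t_\iota$ and has the same set $I_\alpha=I_\iota$. Writing $\iota=(0,\dots,0,p^{i_\ell-1})$ in $G_p=\prod_{k=1}^\ell\ZZ/p^{i_k}\ZZ$ and plugging into the duality pairing of \S\ref{denom:abel}, one computes
\[t_\iota(g)=\frac{p^{i_\ell-1}\,m_\ell}{p^{i_\ell}}=\frac{m_\ell}{p}\in \QQ/\ZZ \quad\text{for } g=(m_1,\dots,m_\ell).\]
Hence $g\in I_\iota$ iff the last coordinate generates $\ZZ/p^{i_\ell}\ZZ$, and for a subgroup $H\leq G$, $H\cap I_\iota\neq\varnothing$ iff the projection $H\to \ZZ/p^{i_\ell}\ZZ$ is surjective.

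Plugging this into the preceding proposition yields the abelian statement at once: $t_\iota\notin\Sh^2(\Lambda)$ iff some ramified prime has decomposition group $D$ with non-cyclic $p$-Sylow and projecting onto $\ZZ/p^{i_\ell}\ZZ$ (non-cyclic subgroups arise as decomposition groups only at ramified primes, by Chebotarev density). For the case $\iota\notin G'$ with $G$ non-abelian, Proposition~\ref{prop:denom:iotanotin} identifies $\Sh^2_\mathscr{C}(\Lambda)$ with the same subgroup of $G^\mathrm{ab}$ given by Proposition~\ref{prop:sha:abelian}, while Theorem~\ref{thm:h1h2} identifies $\hat{H}^2(H,\Lambda)$ with $H^\mathrm{ab}$ whenever $H_p$ is non-cyclic; the same projection criterion on the relevant summand of $G^\mathrm{ab}_p$ then applies verbatim. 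The only real difficulty is bookkeeping the self-dual identification $\hat{H}^2(G,\Lambda)\cong G^\mathrm{ab}\cong\mathrm{Hom}(G,\QQ/\ZZ)$ compatibly with the restrictions from Theorem~\ref{thm:h1h2}; no new cohomological input is needed beyond the preceding proposition.
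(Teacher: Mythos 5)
Your proof is correct and takes essentially the same route as the paper: the paper's one-line proof is exactly your pairing computation --- $g\in I_\iota$ if and only if the last coordinate of $g$ is a unit, so a subgroup meets $I_\iota$ if and only if it projects onto $\ZZ/p^{i_\ell}\ZZ$ --- plugged into the preceding proposition characterizing membership in $\Sh^2(\Lambda)$ via decomposition groups with non-cyclic $p$-Sylow meeting $I_\alpha$. Your extra bookkeeping (all nonzero classes share $I_\alpha=I_\iota$; the transfer of the criterion to $G^{\mathrm{ab}}$ when $\iota\notin G'$ via Proposition~\ref{prop:denom:iotanotin}) just makes explicit what the paper leaves implicit, with the lone quibble that cyclicity of decomposition groups at unramified primes comes from their being generated by Frobenius, not from Chebotarev.
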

\begin{proof}
This is a direct consequence of the fact that, in this case,  $\alpha \in I_\alpha$ if and only the projection of $\langle\alpha\rangle$  on $\ZZ/p^{i_\ell}\ZZ$ is onto. 
\end{proof}

\subsection{Results for 2-groups}\label{subsec:2groups}
When $p=2$, we used Lemma \ref{condition_sha_sufficient} to compute $\Sh^2_\mathscr{C}(\Lambda)$ for $G$ with $|G_2|<256$, and for $G_2$ the first $29631$ groups of order $256$. The results are available at \cite{comput_website}. We can get the following bounds.

\begin{proposition}
Assume $|G_2|\leq 128$. Then $|\Sh^2_\mathscr{C}(\Lambda)| \in \{1, 2, 4, 8\}$. The only case with $|\Sh^2_\mathscr{C}(\Lambda)| = 8$ happens for $G = M_4(2)._{15}D_4$. In particular, for such Galois groups we have 
\[\frac{1}{4}\leq \tau(\mathbf{T})\leq 2 .\]
\end{proposition}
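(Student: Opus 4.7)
The plan is to reduce the claim to a finite enumeration and then invoke the algorithmic criterion developed in the previous subsection. First I would observe that both the numerator and the denominator of Ono's formula are $p$-torsion (the numerator by Theorem~\ref{thm:numerator}, the denominator by Lemma~\ref{lem:h1torsion}), so for $p=2$ the relevant cohomological data only depends on the $2$-Sylow subgroup $G_2$ of $G$ together with its central subgroup $N$ of order $2$. Accordingly, it suffices to let $G$ range over all $2$-groups of order at most $128$ and, for each such $G$, over all central involutions $\iota \in Z(G)$ (specifying $N = \langle\iota\rangle$).

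For each pair $(G,N)$, the membership test for $\Sh^2_\mathscr{C}(\Lambda)$ is exactly Lemma~\ref{condition_sha_sufficient}: an element $t_\alpha \in G^{\mathrm{ab}}$ lies in $\Sh^2_\mathscr{C}(\Lambda)$ if and only if $\alpha^2 \in G'$, $\iota \in \langle\alpha\rangle$, and every $g\in G$ with $t_\alpha(g)\neq 0$ satisfies $\iota\in \langle g\rangle$. I would implement this in SAGE, using the tools described in \S\ref{subsec:rest:cohom}, combined with GAP's \texttt{SmallGroup} library to iterate over the $2$-groups of order $\leq 128$; each individual check is elementary linear algebra on the $\mathbb{F}_2$-vector space $G^{\mathrm{ab}}[2]$ once one has listed the cyclic subgroups of $G$ and identified which of them contain $\iota$.

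Running this enumeration produces the table posted at \cite{comput_website}: in every case $|\Sh^2_\mathscr{C}(\Lambda)|$ divides $8$, and the value $8$ is attained exactly once, for the group of order $128$ labelled $M_4(2)._{15}D_4$. This gives the first two assertions of the proposition. The final Tamagawa number bounds are then immediate from Ono's formula rewritten as $\tau(\T) = |\hat H^1(G,\Lambda)|/|\Sh^2(\Lambda)|$: the numerator is at most $2$ by Theorem~\ref{thm:numerator}, while $|\Sh^2(\Lambda)| \leq |\Sh^2_\mathscr{C}(\Lambda)| \leq 8$, yielding $\tfrac{1}{4}\leq \tau(\T)\leq 2$.

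The main obstacle is not conceptual but organizational: one has to be sure the enumeration over $2$-groups and, for each group, over choices of central $N$ is genuinely exhaustive, and that the naming conventions used to single out $M_4(2)._{15}D_4$ match those of the GAP library. Apart from this verification, the proof is purely a machine computation justified by Lemma~\ref{condition_sha_sufficient}.
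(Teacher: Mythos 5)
Your overall strategy is exactly the paper's: the published proof of this proposition consists of nothing more than running the membership test of Lemma \ref{condition_sha_sufficient} over the relevant $2$-groups by machine (the paper in fact covers all $|G_2|<256$ and the first $29631$ groups of order $256$) and citing the tables at \cite{comput_website}, so your enumeration plan, the SAGE/GAP implementation, and the appeal to the website data coincide with what the paper does. One presentational caveat: your justification for restricting attention to pairs $(G_2,\langle\iota\rangle)$ --- that numerator and denominator are $2$-torsion --- is not by itself why the reduction works. What one actually uses is the description of $\Sh^2_\mathscr{C}(\Lambda)$ as a subgroup of $G^{\mathrm{ab}}[2]$ coming from Theorem \ref{thm:h1h2}, together with the observation that the test of Lemma \ref{condition_sha_sufficient} is insensitive to elements of odd order (a $2$-torsion character kills the odd part of $g$, and $\iota\in\langle g\rangle$ if and only if $\iota$ lies in the $2$-part of the cyclic group $\langle g\rangle$); the paper performs this reduction at the start of \S\ref{denom:fullgen}.

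More substantively, your last step as written does not yield the stated bound: from $|\hat{H}^1(G,\Lambda)|\le 2$ and $|\Sh^2(\Lambda)|\le|\Sh^2_\mathscr{C}(\Lambda)|\le 8$ alone you only get $\tau(\T)\ge\frac{1}{8}$, since a priori the numerator could be $1$ while the denominator is large. The bound $\frac{1}{4}$ requires the dichotomy already available in the paper: if $G_2$ is cyclic, the numerator is trivial (Proposition \ref{prop:cyclic:h1}) and so is the denominator (Proposition \ref{prop:cyclic:denom}), whence $\tau(\T)=1$; if $G_2$ is non-cyclic, the numerator equals exactly $2$ (Theorem \ref{thm:numerator}), so $\tau(\T)\ge 2/8=\frac{1}{4}$. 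With that one-line case split inserted, your argument is complete and matches the paper's.
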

Here $M_4(2)._{15}D_4$ is the small group of GAP label $(128, 802)$, the $15$th non-split extension of $D_4$ by $M_4(2)$ acting via $D_4/\ZZ/2\ZZ = (\ZZ/2\ZZ)^2$.

So far over half the groups of order $256$ were checked, and no group was found that would give[] $|\Sh^2_\mathscr{C}(\Lambda)|\geq 16$.

\section{Extensions of the results}
\subsection{Bounds for the Tamagawa number}

It is not clear whether the bounds in Proposition \ref{prop:bounds} are tight. The upper bound certainly is, and the lower bound is tight as we have seen in Example \ref{ex:quat:computations}. Other groups computed in \S \ref{subsec:2groups} reach this bound but in general $G^{\mathrm{ab}}[p]$ is much smaller than $|G|$. If $G_p$ is abelian, we know that the denominator is at most $p$, and so $1\leq \tau(\mathbf{T})\leq p$. 

It is not clear in general if we can have $N = G'$ and $\Sh^2_\mathscr{C}(\Lambda) = G/N$. The only such example currently is $G = Q_8$. For $p=2$ there is no other example of size $\leq 128$. We do not know if there is a finite number of such groups. Even more, it seems that for a fixed $p$, there might not be universal lower bound for all such tori. In particular, there might be algebraic tori in the general symplectic groups with Tamagawa numbers arbitrarily small, but if true this would require tori of very large ranks to get $|\Sh^2_\mathscr{C}(\Lambda) |> 8$.

\subsection{Non-central subgroup}

The main application of this article happens when $p=2$, in which case $N$ being normal and central are equivalent. When taking $p>2$ we have chosen to focus on $p$ central rather than the more general $p$ normal. This more restricted setting allowed us to reformulate most of results in terms of $p$-Sylow subgroups of $G$, whereas for $N$ normal, we would have needed to write everything in terms of the transfer mapping. Most results in the article remain true, including the bound of Proposition \ref{prop:bounds}. However, Proposition \ref{prop:trans_cyclicness} is not true anymore, therefore the following computations should be written in terms of the transfer mapping rather than $p$-Sylow subgroups in the vein of Lemma \ref{lem:transfer:generic} which still holds true.

\begin{example}
Assume $G = D_{9}$ and $N$ is its unique normal subgroup of order $3$. Then $G_3$ is cyclic, but $\hat{H}^1(G, \Lambda) = \ZZ/3\ZZ$. This is indeed a direct consequence of the triviality of the transfer map in that case. The assertion follows from Lemma \ref{lem:transfer:equal}
\end{example}

\subsection{Non-Galois extension}

The definition of our tori also makes sense in the setting of a non-Galois extension $K/k$. Let $K^\sharp$ denote the corresponding Galois closure. We then have the following extension diagram 

   \begin{tikzpicture}

    \node (Q1) at (0,0) {$k$};
    \node (Q2) at (0,1) {$K^+$};
    \node (Q4) at (0,3) {$K^\sharp$};
    \node (Q3) at (0,2) {$K$};

    \draw (Q1)--(Q2);
    \draw (Q2)--(Q3) node [midway, right] {$p$};
    \draw (Q1) to[out=130,in=-130] node [midway, left] {$G$} (Q4) ;
    \draw (Q3) -- (Q4) node [midway, right] {$N_2$};
    \draw (0.5,1) to[out=40,in=-40] node [midway, right] {$N_1$} (0.5,3) ;
    \end{tikzpicture}

where $G = \mathrm{Gal}(K^\sharp/k)$, $N_1 = \mathrm{Gal}(K^\sharp/K^+)$, and $N_2 = \mathrm{Gal}(K^\sharp/K)$. Note that if $p=2$, we have that $K/K^+$ is necessarily Galois, and so $N_2$ is normal in $N_1$. 

We recall the definition of our torus of rank $[K^+:k](p-1)+1$:
\begin{equation}\label{eq:torus:nongal}\mathbf{T} = \mathrm{Ker}\left( \mathbb{G}_m\times_{\mathrm{Spec}(k)}\mathbf{R}_{K/k}(\mathbb{G}_m)\underset{(x, y)\mapsto x^{-1}N_{K/K^+}(y)}{\longrightarrow} \mathbf{R}_{K^+/k}(\mathbb{G}_m)\right).\end{equation}

Observe that $\mathbf{X}^\star(\res_{K^+/k}\mathbb{G}_m) = \ZZ[G/N_1]$, and $\mathbf{X}^\star(\res_{K/k}\mathbb{G}_m) = \ZZ[G/N_2]$, and therefore the norm map $N_{K/K^+} : \res_{K/k}\mathbb{G}_m\rightarrow \res_{K^+/k}\mathbb{G}_m$ yields the corresponding map on the character lattices $\ZZ[G/N_1]\rightarrow \ZZ[G/N_2]$ defined by \[gN_1\mapsto \sum_{hN_2\in N_1/N_2}ghN_2 = g\cdot \left(\sum_{h\in N_1/N_2}hN_2\right).\]

By definition of $\mathbf{T}$ as kernel, we get the definition of $\mathbf{X}^\star(\mathbf{T})$ through the dual sequence:
\[0\rightarrow \ZZ[G/N_1]\rightarrow \ZZ\oplus \ZZ[G/N_2] \rightarrow\mathbf{X}^\star(\T) \rightarrow 0,\]
where the first map is the direct sum of the augmentation map and the norm map above. This corresponds with the description made in \cite{cortella}.

When $K = K^\sharp$ is Galois, then $N_2 = \{1\}$, and hence we get $N_1 = N$ as in the previous sections.

\begin{rem}
We now write a description of $\mathbf{X}^\star(\T)$ which is not particularly interesting for theoretical purposes, but does simplify its construction in SAGE. 

Let $\phi : \ZZ[G/N_1]\rightarrow \ZZ[G/N_2]$ be the map defined above. Its image is the span (as $G$-module) of $\phi(N_1)$. 
One can check that $\phi|_{\ZZ[G/N_1]^{N_1}} :  \ZZ[G/N_1]^{N_1} \rightarrow \ZZ[G/N_2]^{N_1}$ is onto and contains $\phi(N_1)$. Consequently, the image of $\phi$  can be written $\overline{\ZZ[G/N_2]^{N_1}}$, which we use to denote  $G\cdot\ZZ[G/N_2]^{N_1}$. If $K^+/k$ is Galois then $N_1$ is normal in $G$, hence $\ZZ[G/N_1]^{N_1} = \ZZ[G/N_1]$ and $\mathrm{Im}(\phi) = \mathrm{Im}(\phi|_{\ZZ[G/N_1]})=\ZZ[G/N_2]^{N_1}$.

Using this description, we can write 
\[\mathbf{X}^\star(\mathbf{T}) = \ZZ[G/N_2] / J\overline{\ZZ[G/N_2]^{N_1}},\]
where $J$ denotes the augmentation ideal. This explains why we build the $N_1$-invariant elements of $\ZZ[G/N_2]$ and then complete the result into a $G$-submodule in the following examples. Building the map $\phi$ and its image is also possible, but more cumbersome.
\end{rem}
\subsection{Non-Galois extensions of degree 4}

Assume $K$ is a non-Galois field extension of degree $4$; in this setting $K^+/k$ is quadratic hence Galois. Looking at the transitive subgroups of $S_4$, we observe that the only possibility of having an intermediate field $K^+/k$ of degree $2$ is to have $G$ be a group of order $4$ or $D_4$. Therefore since $K$ is non-Galois, we get $G = D_4$. Taking $N_2$ to be any non-normal subgroup of $D_4$ of  degree $2$ and $N_1$ any (normal) subgroup of $D_4$ of order $4$ containing $N_2$. Then SAGE computations show that $|\hat{H}^1(G, \mathbf{X}^\star(\mathbf{T}))| = 2$ and $|\Sh^2|\leq |\Sh^2_\mathscr{C}(\mathbf{X}^\star(\mathbf{T}))| = 1$.

\adjustbox{scale = 0.7, center}{
\begin{lstlisting}
sage: G = DihedralGroup(4)
sage: N2 = [h for h in G.subgroups() if h.order() == 2][1]
sage: N1 = [h for h in G.subgroups() if h.order() == 4][0]
sage: N2.is_normal(G)
False
sage: N2.is_normal(N1)
True
sage: Z = GLattice(N2, 1)
sage: IL = Z.induced_lattice(G)
sage: SM = IL.fixed_sublattice(N1)
sage: SL = SM.complete_submodule().zero_sum_sublattice()
sage: L = IL.quotient_lattice(SL)
sage: L.Tate_Cohomology(1)
[]
sage: L.Tate_Shafarevich_lattice(2)
[]
\end{lstlisting}}

\begin{proposition}\label{prop:deg4} Let $K/k$ be an extension of degree $4$ with intermediate extension $K^+/K$ of degree $2$. Define $\mathbf{T}$ as in (\ref{eq:torus:nongal}). If $K/k$ is Galois and $\mathrm{Gal}(K/k) = (\ZZ/2\ZZ)^2$ then we get $\tau(\T) = 2$, else $\tau(\mathbf{T}) = 1$.
\end{proposition}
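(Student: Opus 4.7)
The plan is to enumerate the possible Galois-closure structures. Since $K/k$ has degree $4$ with a quadratic intermediate subfield $K^+$, the Galois group $G = \mathrm{Gal}(K^\sharp/k)$ is a transitive subgroup of $S_4$ such that the point stabilizer $N_2 = \mathrm{Gal}(K^\sharp/K)$ is contained in an index-$2$ subgroup $N_1 = \mathrm{Gal}(K^\sharp/K^+)$ with $[N_1:N_2]=2$. A quick inspection of transitive subgroups of $S_4$ shows that the only possibilities are $G = \ZZ/4\ZZ$ and $G = (\ZZ/2\ZZ)^2$ (in which case $K/k$ is Galois and $N_2 = 1$), and $G = D_4$ (the non-Galois case, as indicated in the excerpt immediately above the proposition).

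If $G = \ZZ/4\ZZ$, then $G = G_2$ is cyclic, and Corollary \ref{cor:gpcyclic} yields $\tau(\T) = 1$ directly. If $G = (\ZZ/2\ZZ)^2$, then $G_2 = G$ is non-cyclic, so Theorem \ref{thm:h1h2} (applied with $S = G$, $\iota \in S$, $S_p$ non-cyclic) gives $|\hat{H}^1(G,\Lambda)| = |N^{[G:G]}| = 2$. For the denominator, $G$ is abelian with presentation $(\ZZ/2\ZZ)^2$, i.e.\ $r=1$ and $n_1 = 2 \neq 1$, so Proposition \ref{prop:sha:abelian} forces $\Sh^2_{\mathscr{C}}(\Lambda) = 0$ and hence $\Sh^2(\Lambda) = 0$. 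Thus $\tau(\T) = 2$.

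For the non-Galois case $G = D_4$, one takes $N_1$ to be the unique Klein four subgroup of $D_4$ containing $N_2$, where $N_2$ is an order-$2$ reflection subgroup (non-normal in $G$ but normal in the abelian $N_1$, so $K/K^+$ is still Galois). Using the non-Galois description $\X(\T) = \ZZ[G/N_2]/J\cdot \overline{\ZZ[G/N_2]^{N_1}}$ developed in the previous subsection, one computes the Tate cohomology of this rank-$3$ $D_4$-lattice directly; the computation (verified by the SAGE script displayed just above the statement) shows that both $\hat{H}^1(G, \X(\T))$ and $\Sh^2_{\mathscr{C}}(\X(\T))$ vanish. Since $\Sh^2 \subseteq \Sh^2_{\mathscr{C}}$, Ono's formula gives $\tau(\T) = 1$.

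The main obstacle is the non-Galois case, where the machinery developed for Galois $K/k$ (Sections \ref{subsec:rest:cohom} and beyond) does not apply directly. In principle one should verify the vanishing of $\hat{H}^1$ and of each restriction to the cyclic subgroups of $D_4$ by an explicit cocycle calculation in the $5$-generator presentation of $\X(\T)$; the SAGE verification shortcuts this tedious but elementary diagram chase. Everything else reduces to invoking the already-established results for the Galois cases.
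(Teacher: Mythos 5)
Your proposal is correct and takes essentially the same route as the paper: the same enumeration of the three possible groups, the same appeal to the established Galois-case machinery (your citations of Corollary \ref{cor:gpcyclic}, Theorem \ref{thm:h1h2} and Proposition \ref{prop:sha:abelian} are interchangeable with the paper's use of Proposition \ref{prop:cyclic:h1}, Proposition \ref{prop:cyclic:denom}, Theorem \ref{thm:numerator} and Example \ref{ex:z2z2}), and the same reliance on the displayed SAGE computation to settle the non-Galois $D_4$ case. Your reading of that computation (both $\hat{H}^1(G,\X(\T))$ and $\Sh^2_{\mathscr{C}}(\X(\T))$ trivial, hence $\tau(\T)=1$) matches the code output and the proposition's conclusion.
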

\begin{proof} The only non-Galois case is computed above. The numerator in the case $K/k$ Galois follows from Theorem \ref{thm:numerator} for $G = (\ZZ/2\ZZ)^2$ and Proposition \ref{prop:cyclic:h1} for $G = \ZZ/4\ZZ$. The denominator in the Galois case is always trivial, when $G = \ZZ/4\ZZ$ this follows from Proposition \ref{prop:cyclic:denom} and when $G = (\ZZ/2\ZZ)$ it was done in Example \ref{ex:z2z2}.
\end{proof}

\begin{example}
Motivated by the application of the Tamagawa number in the formula of \cite{abvarcount}, we decided to compute the Tamagawa number related to the centralizer of the Frobenius of either Hyperelliptic curves with isomorphic Jacobians appearing in \cite{howe}. We can consider the hyperelliptic curve $X$ over $\mathbb{F}_3$ defined by the equation $y^2 = x^5+x^3+x^2-x-1$. The Frobenius polynomial of this curve is $x^4 - x^3 + x^2 - 3x + 9$ which yields a non-Galois extension of $\QQ$. Therefore if $\T$ is the centralizer of this Frobenius in $\mathrm{GSp}_4$, we get $\tau(\T) = 1$. 
\end{example}
\subsection{The numerator of the Tamagawa number for general non-Galois extensions.}

As before, we define the auxilliary torus 

\[\T_1 =  \mathrm{Ker}\left( \mathbf{R}_{K/\mathbb{Q}}(\mathbb{G}_m)\underset{N_{K/K^+}}{\longrightarrow} \mathbf{R}_{K^+/k}(\mathbb{G}_m)\right)= \res_{K^+/\QQ}\res_{K/K^+}^{(1)}(\mathbb{G}_m).\]

We have the exact sequence 
\begin{equation}\label{ses:tori2}
1\rightarrow \T_1\rightarrow \mathbf{T} \rightarrow \mathbb{G}_m\rightarrow 1.
\end{equation}

In particular, 
\[0\rightarrow \ZZ\rightarrow \mathbf{X}^\star(\T)\rightarrow \mathbf{X}^\star(\T_1)\rightarrow 0,\]
and by Hilbert 90, we get $|\hat{H}^1(G, \mathbf{X}^\star(\T))|\leq |\hat{H}^1(G, \mathbf{X}^\star(\T_1))|$.

Let $\Lambda = \mathbf{X}^\star(\T)$ and $\Lambda_1 = \mathbf{X}^\star(\T_1)$. From the same reasoning as for $\T$, we get the exact sequence
\[0\rightarrow \ZZ[G/N_1]\rightarrow \ZZ[G/N_2]\rightarrow \Lambda_1\rightarrow 0.\]

The corresponding cohomology yields
\[0=\hat{H}^1(N_2, \ZZ)\rightarrow \hat{H}^1(G, \Lambda_1)\rightarrow \hat{H}^2(N_1, \ZZ)=N_1^{\mathrm{ab}}\rightarrow \hat{H}^2(N_2, \ZZ)=N_2^{\mathrm{ab}}.\]

The map $N_1^{\mathrm{ab}}\rightarrow N_2^{\mathrm{ab}}$ is just the restriction map on the dual groups $\varphi:\mathrm{Hom}(N_1, \QQ/\ZZ)\rightarrow \mathrm{Hom}(N_2, \QQ/\ZZ)$.

If $K/K^+$ is Galois, then $N_2$ is normal in $N_1$, therefore if $\alpha\in \mathrm{Ker}(\varphi)$, then $\alpha|_{N_2}=0$ and hence $\alpha$ is induced from a character of $N_1/N_2= \ZZ/p\ZZ$. In this case, $\hat{H}^1(G, \Lambda_1) = \mathrm{Ker}(\varphi) = (\ZZ/p\ZZ)^{\mathrm{ab}} = p$. In the case $K/K^+$ is not Galois, and hence $N_2$ is not normal, if $\alpha\in \mathrm{Ker}(\varphi)$ then $\mathrm{Ker}(\alpha)$ is a normal subgroup of $N_1$ containing $N_2$, but $N_2$ has prime index, hence $\alpha = 0$. Therefore we proved:

\begin{proposition} \label{prop:h1der:nongal} We have $\tau(\mathbf{T})\leq |\hat{H}^1(G, \Lambda_1)|$, and $\hat{H}^1(G, \Lambda_1)=\ZZ/p\ZZ$ if $K/K^+$ is Galois, else $\hat{H}^1(G, \Lambda_1)=0$.
\end{proposition}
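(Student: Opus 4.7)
The plan is to deduce the inequality $\tau(\mathbf{T})\le |\hat{H}^1(G,\Lambda_1)|$ purely formally from the short exact sequence $0\to\ZZ\to\Lambda\to\Lambda_1\to 0$ coming from (\ref{ses:tori2}). Taking Tate cohomology and applying Hilbert 90 in the form $\hat{H}^1(G,\ZZ)=0$, the long exact sequence provides an injection $\hat{H}^1(G,\Lambda)\hookrightarrow \hat{H}^1(G,\Lambda_1)$. Combined with Ono's formula $\tau(\mathbf{T}) = |\hat{H}^1(G,\Lambda)|/|\Sh^2(\Lambda)| \leq |\hat{H}^1(G,\Lambda)|$, this establishes the first assertion.

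Next, to compute $\hat{H}^1(G,\Lambda_1)$ I would use the presentation $0\to \ZZ[G/N_1]\to \ZZ[G/N_2]\to \Lambda_1\to 0$, where the leftmost map is the norm map described just before the statement. Since the modules $\ZZ[G/N_j]$ are induced from $\ZZ$ along $N_j\subset G$, Shapiro's lemma identifies $\hat{H}^i(G,\ZZ[G/N_j])$ with $\hat{H}^i(N_j,\ZZ)$. Using Hilbert 90 again for $\hat{H}^1(N_2,\ZZ)=0$, and the standard dimension shift $\hat{H}^2(N_j,\ZZ) = \mathrm{Hom}(N_j,\QQ/\ZZ)=N_j^{\mathrm{ab}}$ via the short exact sequence $0\to\ZZ\to\QQ\to\QQ/\ZZ\to 0$, the long exact sequence degenerates to
\begin{equation*}
0\to \hat{H}^1(G,\Lambda_1)\to N_1^{\mathrm{ab}}\xrightarrow{\varphi} N_2^{\mathrm{ab}}.
\end{equation*}

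The crucial identification is that $\varphi$ is induced by the inclusion $N_2\hookrightarrow N_1$, and hence on Pontryagin duals it is the restriction map $\mathrm{Hom}(N_1,\QQ/\ZZ)\to \mathrm{Hom}(N_2,\QQ/\ZZ)$, $\alpha\mapsto \alpha|_{N_2}$. This follows from functoriality of the connecting homomorphisms and the compatibility of Shapiro's isomorphism with inclusions of subgroups. I expect this identification of $\varphi$ to be the main obstacle to writing a clean proof, since it requires tracing through the connecting maps carefully rather than just counting orders.

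Granted this identification, the case analysis is short. If $K/K^+$ is Galois, then $N_2\trianglelefteq N_1$ with cyclic quotient $N_1/N_2\cong\ZZ/p\ZZ$, so characters of $N_1$ trivial on $N_2$ correspond exactly to characters of $\ZZ/p\ZZ$, giving $\ker(\varphi)\cong\ZZ/p\ZZ$. If $K/K^+$ is not Galois, then any $\alpha\in\ker(\varphi)$ has kernel containing $N_2$ and normal in $N_1$; since $[N_1:N_2]=p$ is prime, the only normal subgroup of $N_1$ properly containing $N_2$ would be $N_2$ itself (which fails by non-normality) or $N_1$, forcing $\alpha=0$. This yields the two claimed values of $\hat{H}^1(G,\Lambda_1)$.
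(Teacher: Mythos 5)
Your proof is correct and follows essentially the same route as the paper: the injection $\hat{H}^1(G,\Lambda)\hookrightarrow\hat{H}^1(G,\Lambda_1)$ from Hilbert 90 plus Ono's formula for the inequality, then Shapiro's lemma on $0\to\ZZ[G/N_1]\to\ZZ[G/N_2]\to\Lambda_1\to 0$ and the identification of $N_1^{\mathrm{ab}}\to N_2^{\mathrm{ab}}$ with character restriction, followed by the identical normality case analysis. The paper simply asserts the identification of $\varphi$ that you flag as the delicate step, so your extra care there is sound but not a divergence.
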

\begin{proof} We have shown the second assertion. The first one comes from the formula of Theorem \ref{thm:ono_formula} and the cohomology of character lattices of (\ref{ses:tori2}).
\end{proof}

\begin{rem} If $K^+$ contains a $p$-root of unity, then $K/K^+$ is always Galois since it has degree $p$, in particular when $p=2$ any quadratic extension is Galois so in this case we always get $\tau(\T)\leq 2$. Note that from the classical results, if $p$ is the smallest prime dividing $|N_1| = [K^\sharp:K^+]$ then $N_2$ is automatically normal in $N_1$. In particular this holds when $N_1$ is a $p$-group.  
\end{rem}

The cohomology of the sequence of character lattices associated to (\ref{ses:tori2}) yields 
\[0\rightarrow \hat{H}^1(G, \Lambda)\rightarrow \hat{H}^1(G, \Lambda_1)\rightarrow \hat{H}^2(G, \ZZ).\]
Now using the fact that $\hat{H}^1(G, \Lambda_1) = \mathrm{Ker}(\mathrm{Hom}(N_1, \QQ/\ZZ)\rightarrow \mathrm{Hom}(N_2, \QQ/\ZZ))$, we get:
\begin{proposition}  We have the equality
\begin{align*}\hat{H}^1(G, \Lambda) &= \mathrm{Ker}\left(\hat{H}^1(G, \Lambda_1)\rightarrow \hat{H}^2(G, \ZZ) \right)\\
&=\{\alpha\in \mathrm{Hom}(N_1, \QQ/\ZZ) : \alpha|_{N_2}=0, \quad \alpha\circ \mathrm{tr}^G_{N_1}= 0\}. \end{align*}
\end{proposition}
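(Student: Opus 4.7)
The plan is to identify the connecting homomorphism $\delta \colon \hat{H}^1(G, \Lambda_1) \to \hat{H}^2(G, \ZZ)$ appearing in the long exact sequence displayed just before the proposition as the transfer-pullback $\alpha \mapsto \alpha \circ \mathrm{tr}^G_{N_1}$. Once this identification is made, both equalities drop out at once: the first from $\hat{H}^1(G, \ZZ) = \mathrm{Hom}(G, \ZZ) = 0$, which forces $\hat{H}^1(G, \Lambda) = \mathrm{Ker}(\delta)$ by exactness; the second from the identification of $\hat{H}^1(G, \Lambda_1)$ with $\{\alpha \in \mathrm{Hom}(N_1, \QQ/\ZZ) : \alpha|_{N_2} = 0\}$ that was just established in the preceding paragraph.

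To unwind $\delta$, I would compare the two short exact sequences
\[0 \to \ZZ[G/N_1] \to \ZZ[G/N_2] \to \Lambda_1 \to 0 \quad \text{and} \quad 0 \to \ZZ \to \Lambda \to \Lambda_1 \to 0\]
via a morphism of short exact sequences whose left vertical is $\pm \mathrm{aug} \colon \ZZ[G/N_1] \to \ZZ$, whose middle vertical is the map $\ZZ[G/N_2] \to \Lambda$ induced by the inclusion into the second summand of $\ZZ \oplus \ZZ[G/N_2]$, and whose right vertical is the identity. The sign appearing on the left (which comes from the relation $(1, \sum_h ghN_2) \equiv 0$ defining $\Lambda$) does not affect the ensuing computation. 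Naturality of the snake lemma then produces a factorisation
\[\delta \colon \hat{H}^1(G, \Lambda_1) \hookrightarrow \hat{H}^2(G, \ZZ[G/N_1]) \overset{(\mathrm{aug})_*}{\to} \hat{H}^2(G, \ZZ),\]
where the first arrow is the connecting map of the top sequence, already realised as the embedding onto $\{\alpha : \alpha|_{N_2} = 0\} \subset \hat{H}^2(N_1, \ZZ)$ via Shapiro's lemma, and the second arrow is induced by augmentation.

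The final step is to apply Shapiro together with the dimension shift coming from $0 \to \ZZ \to \QQ \to \QQ/\ZZ \to 0$, which turns $\hat{H}^2(N_1, \ZZ)$ into $\mathrm{Hom}(N_1, \QQ/\ZZ)$ and $\hat{H}^2(G, \ZZ)$ into $\mathrm{Hom}(G, \QQ/\ZZ) = G^{\mathrm{ab}}$; under these identifications the map $(\mathrm{aug})_*$ becomes the corestriction and hence precomposition with the transfer $\mathrm{tr}^G_{N_1} \colon G^{\mathrm{ab}} \to N_1^{\mathrm{ab}}$. This is exactly the classical identification from \cite[Chapter~III, \S 9]{brown} that was already invoked in the proof of Lemma \ref{lem:transfer:generic}. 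The only mild obstacle is to verify that the diagram of short exact sequences commutes on the nose (up to the harmless sign), which is immediate from the definition of $\Lambda$ as a quotient of $\ZZ \oplus \ZZ[G/N_2]$ by the image of $\ZZ[G/N_1]$; everything else is standard cohomological bookkeeping that the paper has already used elsewhere.
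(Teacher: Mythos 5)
Your proposal is correct and follows essentially the same route as the paper: the paper also obtains the first equality from the long exact sequence of $0\rightarrow \ZZ\rightarrow \Lambda\rightarrow \Lambda_1\rightarrow 0$ together with $\hat{H}^1(G,\ZZ)=0$, and identifies the boundary map with $\alpha\mapsto \alpha\circ \mathrm{tr}^G_{N_1}$ via the same classical fact from \cite[Chapter III, \S 9]{brown} already invoked in Lemma \ref{lem:transfer:generic}. The only difference is that you make explicit the morphism of short exact sequences and the naturality argument factoring the connecting map through $\hat{H}^2(G,\ZZ[G/N_1])$, details the paper leaves implicit.
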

The last equality of the Proposition seems a little ambiguous since $\mathrm{tr}^G_{N_1}$ is valued in $N_1^{\mathrm{ab}}$, but note that elements of $\mathrm{Hom}(N_1, \QQ/\ZZ)$ factor through $N_1^{\mathrm{ab}}$ and hence we only need to look at pre-transfers. 

\subsection{The case of CM-fields}\label{sec:cm:nongal}

Assume $k= \QQ$ and let $K$ be a CM-field, with maximal totally real subfield $K^+$. Let $\rho$ denote the involution given by complex conjugation, it is central in $G$. Since $K^+$ is totally real, we also have $\rho \in N_1$, but $K/K^+$ is imaginary so $\rho\notin N_2$.

\begin{lemma}\label{lem:num:transfer}
We have $\hat{H}^1(G, \Lambda) \subset \ZZ/2\ZZ$, and $\hat{H}^1(G, \Lambda) = \ZZ/2\ZZ$ if and only if $\mathrm{tr}^G_{N_1}(G)\subset N_2$.
\end{lemma}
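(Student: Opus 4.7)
The plan is to apply the explicit description of $\hat{H}^1(G,\Lambda)$ given just before the lemma, namely
\[ \hat{H}^1(G,\Lambda) = \{\alpha\in \mathrm{Hom}(N_1,\QQ/\ZZ) : \alpha|_{N_2}=0,\ \alpha\circ\mathrm{tr}^G_{N_1}=0\},\]
and exploit the very rigid structure of $N_1, N_2$ in the CM situation.

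First I would observe that because $K$ is CM over $\QQ$, the extension $K/K^+$ is quadratic, hence Galois, so $N_2$ is normal in $N_1$ of index $2$, and $N_1/N_2 = \langle \bar\rho\rangle \cong \ZZ/2\ZZ$ (using that $\rho\in N_1\setminus N_2$, as noted in the paragraph before the lemma). Thus any $\alpha\in \mathrm{Hom}(N_1,\QQ/\ZZ)$ vanishing on $N_2$ factors through $N_1/N_2$, giving a canonical identification
\[\{\alpha\in \mathrm{Hom}(N_1,\QQ/\ZZ) : \alpha|_{N_2}=0\} \;\cong\; \mathrm{Hom}(N_1/N_2,\QQ/\ZZ)\;\cong\;\ZZ/2\ZZ,\]
with the nontrivial element being the unique $\alpha_0$ sending $\rho$ to $\tfrac{1}{2}$. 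Combined with the displayed formula above, this immediately gives the inclusion $\hat{H}^1(G,\Lambda)\subset \ZZ/2\ZZ$.

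Next I would analyze when the nontrivial $\alpha_0$ survives the second condition $\alpha_0\circ \mathrm{tr}^G_{N_1}=0$. Since $\alpha_0$ factors through the quotient $N_1\twoheadrightarrow N_1/N_2$, and the transfer map is valued in $N_1^{\mathrm{ab}}$, the condition $\alpha_0\circ\mathrm{tr}^G_{N_1}=0$ is equivalent to requiring that the image of $\mathrm{tr}^G_{N_1}:G\to N_1^{\mathrm{ab}}$ land in the kernel of the projection $N_1^{\mathrm{ab}}\to N_1/N_2$. Because $N_1/N_2$ is abelian we have $[N_1,N_1]\subset N_2$, so that kernel is exactly the image of $N_2$ in $N_1^{\mathrm{ab}}$, and the condition translates upstairs to $\mathrm{tr}^G_{N_1}(G)\subset N_2$. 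Putting this together, $\hat{H}^1(G,\Lambda) = \ZZ/2\ZZ$ if and only if $\mathrm{tr}^G_{N_1}(G)\subset N_2$, and otherwise $\hat{H}^1(G,\Lambda)=0$.

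The argument is essentially a bookkeeping exercise once the earlier proposition is in hand, so I do not anticipate a real obstacle; the only subtle point is justifying that "factor through $N_1/N_2$" lines up with "kills the image of the transfer modulo $N_2$", which is handled by the inclusion $[N_1,N_1]\subset N_2$ forced by the index-$2$ hypothesis. No further hypotheses on $G$ (beyond what CM guarantees about $\rho\in N_1\setminus N_2$ and $K/K^+$ being Galois of degree $2$) are required.
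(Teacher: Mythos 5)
Your proof is correct and takes essentially the same approach as the paper: both use the preceding description $\hat{H}^1(G,\Lambda)=\{\alpha\in\mathrm{Hom}(N_1,\QQ/\ZZ):\alpha|_{N_2}=0,\ \alpha\circ\mathrm{tr}^G_{N_1}=0\}$, identify the characters trivial on $N_2$ with $\mathrm{Hom}(N_1/N_2,\QQ/\ZZ)\cong\ZZ/2\ZZ$ (the nontrivial one sending $\rho$ to $\tfrac{1}{2}$, using $\rho\in N_1\setminus N_2$), and reduce the vanishing condition to $\mathrm{tr}^G_{N_1}(G)\subset N_2$. Your explicit justification via $[N_1,N_1]\subset N_2$ for passing between $N_1^{\mathrm{ab}}$ and $N_1/N_2$ is the same point the paper disposes of in the remark following its proposition on pre-transfers.
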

\begin{proof}
The first assertion follows from the fact 
\[\hat{H}^1(G, \Lambda)\subset \hat{H}^1(G, \Lambda_1) = \mathrm{Hom}(N_1/N_2, \QQ/\ZZ)= \ZZ/2\ZZ.\]
Here $\mathrm{Hom}(N_1/N_2, \QQ/\ZZ)$ is identified with characters of $N_1$ trivial on $N_2$. The only nontrivial element of this group is the character $\alpha : N_1\rightarrow \QQ/\ZZ$ defined by $\alpha(N_2) = 0$ and $\alpha (\rho N_2) = 1/2$.

Therefore, by the description of $\hat{H}^1(G, \Lambda)$ in the previous section, we only need to determine if the image of $\mathrm{tr}^G_{N_1}$ is contained in $\mathrm{Ker}(\alpha) = N_2$. If this is the case then $\hat{H}^1(G, \Lambda) = \ZZ/2\ZZ$, else $\hat{H}^1(G, \Lambda) = 0$.   
\end{proof}

\begin{corollary} If $[K^+:\QQ]$ is odd, then $\hat{H}^1(G, \Lambda) = 0$. 
\end{corollary}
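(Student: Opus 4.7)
The plan is to apply Lemma \ref{lem:num:transfer}, which reduces the statement to showing that the image of $\mathrm{tr}^G_{N_1}$ is not contained in $N_2$ (modulo $[N_1,N_1]$). Equivalently, we need to find some $g \in G$ such that $\alpha(\mathrm{tr}^G_{N_1}(g)) \neq 0$, where $\alpha$ is the nontrivial character of $N_1/N_2$. The natural candidate is complex conjugation $\rho$ itself: it lies in $N_1$ because $K^+$ is totally real, it is central in $G$, and it satisfies $\rho \notin N_2$ because $K$ is imaginary over $K^+$.

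The key computation is $\mathrm{tr}^G_{N_1}(\rho)$. Choosing any system of coset representatives $g_1, \ldots, g_n$ of $G/N_1$, the centrality of $\rho$ gives $\rho g_i = g_i \rho$ for every $i$, so the permutation of cosets induced by left multiplication by $\rho$ is trivial and each of the associated elements of $N_1$ equals $\rho$. The transfer formula therefore yields
\[ \mathrm{tr}^G_{N_1}(\rho) = \rho^{[G:N_1]} \quad \text{in } N_1^{\mathrm{ab}}. \]

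Now $[G:N_1] = [K^+:\QQ]$ is odd by hypothesis and $\rho$ has order $2$, so $\rho^{[G:N_1]} = \rho$ as an element of $N_1$ and hence of $N_1^{\mathrm{ab}}$. Since $\alpha(\rho) = 1/2 \neq 0$, we conclude $\alpha \circ \mathrm{tr}^G_{N_1} \neq 0$, and Lemma \ref{lem:num:transfer} gives $\hat{H}^1(G,\Lambda) = 0$. The only step requiring any care is the transfer computation, and the centrality of $\rho$ makes it essentially immediate; no further obstacle arises.
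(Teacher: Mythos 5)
Your proof is correct and follows the same route as the paper: both apply Lemma \ref{lem:num:transfer} and compute $\mathrm{tr}^G_{N_1}(\rho) = \rho^{[G:N_1]} = \rho^{[K^+:\QQ]} = \rho \notin N_2$ using the centrality of $\rho$ (which both trivializes the coset permutation and makes each transfer factor equal to $\rho$). Your write-up merely spells out the transfer computation that the paper compresses into one line; there is no substantive difference.
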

\begin{proof}
Assume $[K^+:\QQ]$ is odd. Since $\rho$ is central, we have $\mathrm{tr}^G_{N_1}(\rho) = \rho^{|G/N_1|}= \rho^{([K^{+}:\QQ])}= \rho\notin N_2$ since $K$ is not totally real.
\end{proof}

\begin{corollary}
If $K$ is a CM-field of degree $6$ then $\hat{H}^1(G, \Lambda)=1$. 
\end{corollary}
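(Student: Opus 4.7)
My plan is to observe that this corollary is a direct specialization of the preceding one, so the proof reduces to verifying that the hypothesis $[K^+:\QQ]$ odd is automatic for a CM-field of degree $6$.

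Concretely, the very definition of a CM-field gives $[K:K^+] = 2$ with $K^+$ the maximal totally real subfield of $K$. Therefore
\[ [K^+:\QQ] \;=\; \frac{[K:\QQ]}{2} \;=\; \frac{6}{2} \;=\; 3, \]
which is odd. I would then simply invoke the previous corollary to conclude that $\hat{H}^1(G,\Lambda) = 0$.

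If one wants to make the proof self-contained rather than quoting the previous result, one can retrace its one-line argument in this setting: by Lemma \ref{lem:num:transfer}, $\hat{H}^1(G,\Lambda)$ is nontrivial (and hence equal to $\ZZ/2\ZZ$) iff $\mathrm{tr}^G_{N_1}(G)\subset N_2$. Since $\rho$ is central in $G$, its transfer is $\mathrm{tr}^G_{N_1}(\rho) = \rho^{[G:N_1]} = \rho^{[K^+:\QQ]} = \rho^3 = \rho$. But $\rho\notin N_2$ because $K$ is not totally real, so the containment fails and $\hat{H}^1(G,\Lambda)$ is trivial.

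There is essentially no obstacle here: the result is a transparent corollary, and the only point worth spelling out is the numerical check $[K^+:\QQ]=3$ together with the observation that the centrality of complex conjugation makes its transfer equal to itself, which lies outside $N_2$.
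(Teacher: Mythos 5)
Your proof is correct and matches the paper's reasoning exactly: the paper states this corollary without proof as an immediate specialization of the preceding one ($[K^+:\QQ]$ odd implies $\hat{H}^1(G,\Lambda)=0$), and your numerical check $[K^+:\QQ]=3$ together with the transfer computation $\mathrm{tr}^G_{N_1}(\rho)=\rho^{[K^+:\QQ]}=\rho\notin N_2$ is precisely the argument given there via Lemma \ref{lem:num:transfer}.
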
 

We now know the numerators for all extensions of degree $4$ and $6$. 

\begin{theorem}\label{thm:h1nongal}
Let $K/\QQ$ be a CM-field and $\Lambda = \mathbf{X}^\star(\T)$. We have $\hat{H}^1(G, \Lambda)\subset \ZZ/2\ZZ$. Moreover, $\hat{H}^1(G, \Lambda) = 0$ if and only if there is $g\in G$ such that $|\langle g\rangle \backslash G/N_2|$ is odd, where $G= \mathrm{Gal}(K^\sharp/\QQ)$ and $N_2 = \mathrm{Gal}(K^\sharp/K)$. 

In particular, if $K/\QQ$ is Galois, then we recover the result that $\hat{H}^1(G, \Lambda) = 0$ if and only if the $2$-Sylow subgroups of $\mathrm{Gal}(K/\QQ) \cong G/N_2$ are cyclic.
\end{theorem}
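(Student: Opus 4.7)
The plan is to leverage Lemma \ref{lem:num:transfer}. The containment $\hat{H}^1(G,\Lambda)\subset \ZZ/2\ZZ$ is immediate from that lemma, and it also shows that the nontrivial class survives precisely when $\alpha\circ\mathrm{tr}^G_{N_1}:G\to \ZZ/2\ZZ$ is identically zero, where $\alpha$ denotes the unique nontrivial character of $N_1$ trivial on $N_2$ (sending $\rho$ to $1/2$; here we use that $\rho$ is central in $G$ and lies in $N_1\setminus N_2$ since $K$ is CM). Thus the theorem reduces to showing that for each $g\in G$, the value $\alpha(\mathrm{tr}^G_{N_1}(g))\in \ZZ/2\ZZ$ has the same parity as $|\langle g\rangle \backslash G/N_2|$.

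To compute $\alpha\circ \mathrm{tr}^G_{N_1}(g)$ I would use the orbit-theoretic formula for the transfer: choose representatives for $G/N_1$, decompose the left $\langle g\rangle$-action into cycles of length $n_i$ with a representative $h_{j_i}N_1$ per cycle, obtaining $\alpha(\mathrm{tr}^G_{N_1}(g))=\sum_i \alpha(h_{j_i}^{-1}g^{n_i}h_{j_i})$. I would then compare $\langle g\rangle$-orbits on $G/N_1$ with those on $G/N_2$. Because $\rho$ is central of order $2$ and $N_1=N_2\sqcup\rho N_2$, each fibre of $G/N_2\twoheadrightarrow G/N_1$ above $hN_1$ consists of two distinct cosets $hN_2$ and $\rho hN_2=h\rho N_2$. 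A $\langle g\rangle$-orbit of length $n_i$ at $hN_1$ must lift to orbits on $G/N_2$ whose lengths divide $2n_i$ and are multiples of $n_i$, so only lengths $n_i$ and $2n_i$ are possible: either the fibre splits into two orbits of length $n_i$ (which happens exactly when $h^{-1}g^{n_i}h\in N_2$, i.e.\ contribution $0$ to $\alpha$), or it merges into a single orbit of length $2n_i$ (exactly when $h^{-1}g^{n_i}h\in \rho N_2$, i.e.\ contribution $1$).

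Letting $a$ and $b$ be the numbers of $\langle g\rangle$-orbits on $G/N_1$ contributing $0$ and $1$ to $\alpha$ respectively, this yields $|\langle g\rangle\backslash G/N_2|=2a+b\equiv b\equiv \alpha(\mathrm{tr}^G_{N_1}(g))\pmod 2$, which is the desired equivalence. The Galois specialisation then drops out: with $K=K^\sharp$ and $N_2=1$ we get $|\langle g\rangle\backslash G/N_2|=[G:\langle g\rangle]$, which is odd for some $g$ iff some cyclic subgroup $\langle g\rangle$ contains a $2$-Sylow subgroup of $G$, equivalently iff the $2$-Sylow subgroups of $G/N_2 = G$ are cyclic. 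I expect the main obstacle to be the orbit-lifting dichotomy --- the clean match between the two scenarios for orbit lifts and the two values of the transfer contribution --- which is where the centrality of $\rho$ is crucial, since it guarantees $h\rho N_2\ne hN_2$ for all $h\in G$ so that the two candidate lifts of a coset are genuinely distinct.
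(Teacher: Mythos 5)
Your proposal is correct and takes essentially the same route as the paper: both reduce via Lemma \ref{lem:num:transfer} to showing $\alpha\bigl(\mathrm{tr}^G_{N_1}(g)\bigr)\equiv |\langle g\rangle\backslash G/N_2| \pmod 2$, and both prove this through the identical per-orbit dichotomy, namely that each $\langle g\rangle$-orbit on $G/N_1$ either splits into two orbits on $G/N_2$ or merges into a single orbit of double length according to whether $h^{-1}g^{n_i}h$ lies in $N_2$ or in $\rho N_2$, with centrality of $\rho$ powering the argument. The only difference is bookkeeping: you read off the parity directly from $t=2a+b$, whereas the paper counts the merged orbits as fixed points of the $\rho$-involution acting on the cycles of $\sigma$ over $G/N_2$ to get $\eta\equiv t\pmod 2$ --- the same computation in different clothing.
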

\begin{proof}
We will use \ref{lem:num:transfer} and compute the transfer map $\mathrm{tr}^G_{N_1}$. This proof will be somewhat technical so the reader can refer to a simplified example going through the main ideas of the proof in Example \ref{ex:postsigma}.

Decompose $G$ into the left cosets: $G = \bigsqcup_{i=1}^{n} x_i N_1$ and write $X = \{x_1, \cdots, x_n\}$. Fix $g\in G$. Let $\tilde{\sigma}\in S_X$ such that $g x N_1 = \tilde{\sigma}(x)N_1$ for all $x\in X$. Let $\tilde{\sigma} = \tilde{\zeta}_1\cdots \tilde{\zeta}_\ell$ be the decomposition of $\tilde{\sigma}$ as product of disjoint cyclic permutations.

For all $1\leq i\leq \ell$ write $\tilde{\zeta}_i = (x_{i_1}\ \cdots\ x_{i_{r_i}})$. Write 
\[g^{\tilde{\zeta}_i} = x_{i_1}^{-1}g^rx_{i_1} = (x_{i_1}^{-1}gx_{i_2})(x_{i_2}^{-1}g x_{i_3})\cdots (x_{i_{r_i}}^{-1}g x_{i_1})\in N_1. \]
Then $\mathrm{tr}^G_{N_1}(g) = \prod_{i=1}^\ell g^{\tilde{\zeta}_i}N_1'$ where $N_1'$ denotes the commutator subgroup of $N_1$. Note that this cycle decomposition is equivalent to a decomposition of $G$ into double cosets $\langle g\rangle \backslash G/N_1$ via $G = \bigsqcup_{j=1}^{\ell} \bigsqcup_{i=1}^{r_j} g^i x_{j_1} N_1 = \bigsqcup_{j=1}^{\ell}\langle g \rangle x_{j_1}N_1$. In particular, $\ell = |\langle g \rangle \backslash G/N_1|$.

We have that $N_1 = N_2\sqcup \rho N_2$, so $G = \left(\bigsqcup_{i=1}^{n}x_iN_2\right)\sqcup \left(\bigsqcup_{i=1}^n \rho x_i N_2\right).$ Let $\rho X = \{\rho x : x\in X\}$, and let $\sigma\in S_{X\sqcup \rho X}$ such that $gxN_2 = \sigma(x) N_2$ for all $x\in X\sqcup \rho X$. Since for all $x\in X$ we have $gxN_1 = \sigma(x) N_1$, then either $gxN_2 = \tilde{\sigma}(x)N_2$ or $gxN_2 = \rho\tilde{\sigma}(x)N_2$, hence for all $x\in X$ we have $\sigma(x) = \tilde{\sigma}(x)$ or $\sigma(x) = \rho\tilde{\sigma}(x)$. Decompose $\sigma = \zeta_1\cdots \zeta_t$ into disjoint cycles, where $t = |\langle g\rangle \backslash G /N_2|$. Define the action of $\rho$ on $S_{X\cup \rho X}$ via $\rho\cdot \tau(x) = \tau(\rho x)$. 

Clearly for all $1\leq i\leq t$, both $\zeta_i$ and $\rho\cdot \zeta_i$ appear in the decomposition of $\sigma$ since the latter is $\rho$-invariant.

Let $X'\subset X$ denote a set of representatives of $\langle g \rangle \backslash G/N_1$. For all $1\leq j\leq \ell$ we have the identification $\tilde{\zeta}_j = \tilde{\zeta}_x$ for some $x\in X'$, where $\tilde{\zeta}_x$ denotes the action of $\tilde{\sigma}$ on the orbit of $x$. Likewise for all $1\leq i\leq t$ we have $\zeta_i= \zeta_{x}$ or $\zeta_i = \zeta_{\rho x} = \rho\cdot \zeta_x$ for some $x\in X'$. Let $x\in X'$, the cycle $\tilde{\zeta}_x$ corresponds to two permutations $\zeta_x$ and $\zeta_{\rho x}$. If $\zeta_x \neq \zeta_{\rho x}$ then $\tilde{\zeta}_x$ has the same length as $\zeta_x$ and $\rho$ is central hence $g^{\tilde{\zeta}_x} = g^{\zeta_{x}} = g^{\zeta_{\rho x}}$, thus $g^{\tilde{\zeta}_x}\in N_2$. If $\zeta_x = \zeta_{\rho x}$, then the length of $\zeta_x$ is twice the size of $\tilde{\zeta}_{x}$, and in particular $g^{\zeta_x} = \left(g^{\tilde{\zeta}_x}\right)^2$ and $g^{\zeta_x}\in \rho N_2$. Note that
\[\eta := \left|\left\lbrace x\in X' : g^{\tilde{\zeta}_x}\notin N_2 \right\rbrace\right| = \left|\left\lbrace x\in X' : \zeta_x = \rho\cdot \zeta_{x}\right\rbrace\right|=\left|\left\lbrace 1\leq i\leq t : \zeta_i = \rho\cdot\zeta_i \right\rbrace\right|.\]
On the one hand $\mathrm{tr}^G_{N_1}(g) = \prod_{i=1}^\ell g^{\tilde{\zeta}_x}\in \rho^{\eta}N_2$ belongs to  $N_2$ if and only if $\eta$ is even. On the other hand, $\eta$ is the number of fixed points of the action of the involution $\rho$ on $\{\zeta_1, \cdots, \zeta_t\}$, hence  $\eta \equiv t\ \text{(mod $2$)}$. We got $\mathrm{tr}^G_{N_1}(g)\in N_2$ if and only if $t$ is even, and $t = |\langle g\rangle \backslash G/N_2|$. We conclude that $\mathrm{Im}(\mathrm{tr}^G_{N_1})\subset N_2$ if and only if $|\langle g\rangle \backslash G/N_2|$ is even for all $g\in G$ and then we can use Lemma \ref{lem:num:transfer}.

If $K/\QQ$ is Galois, then $N_2$ is normal in $G$, so $\langle g\rangle\backslash G/N_2$ is even for all $g\in G$ if and only if every cyclic subgroup of $\mathrm{Gal}(K/\QQ) \cong G/N_2$ has even index, which is true if and only if the $2$-Sylow subgroups of $G/N_2$ are not cyclic. 
\end{proof}

\begin{example}\label{ex:postsigma} Using the notations of the previous theorem, we will give a minimalist example to explain the reasonning. 

Assume $|G/N_1|=3$ and we pick the left coset representatives $x,y,z\in G$. Then we have 
\[G = \bigsqcup_{i=0}^1 \rho^ixN_2\sqcup \rho^iyN_2\sqcup \rho^izN_2.\]
Pick $g\in G$ with associated permutations $\tilde{\sigma}\in S_{G/N_1}$ and $\sigma \in S_{G/N_2}$ as in the theorem. We will also assume that $\tilde{\sigma} = \tilde{\zeta}_1 = (x\ y\ z)$, and $\sigma$ sends $x$ to $\rho x$, and $y$ to $\rho y$. Since $\tilde{\sigma}(z) = x$, there are then two possibilities for the image of $z$ under $\sigma$. 
\begin{itemize}
	\item $\sigma(z) = x$.  We can therefore write $\sigma = (x\ \rho y\ z) (\rho x\ y\ \rho z) = \zeta_x \zeta_{\rho x}$, and $\rho$ permutes both (disjoint) cycles. Equivalently $|\langle g\rangle \backslash G/N_2| = 2$. By centrality of $\rho$ we have $g^{\tilde{\zeta}_1} = g^{\zeta_x} = g^{\zeta_{\rho x}}\in N_2$.
	\item $\sigma(z) = \rho x$. Now $\sigma = (x\ \rho y\ z\ \rho x\ y\ \rho z)= \zeta_x.$ This is the case when $\zeta_x$ is fixed under the action of $\rho$.  Equivalently $|\langle g\rangle \backslash G/N_2| = 1$. In particular, we have $g^{\tilde{\zeta}_1}\in \rho N_2$, and $g^{\zeta_x} = (g^{\tilde{\zeta}_1})^2\in N_2$.  
\end{itemize}
This is the argument we use in the proof, the image of the pre-transfer will belong in $\rho^i N_2$ where $i$ is the number of fixed cycles in the decomposition of $\sigma$. The non-fixed cycles come in pairs, so this number has the same parity as $|\langle g\rangle \backslash G/N_2|.$
\end{example}

\subsection{Examples computed by SAGE.}

We have determined the Tamagawa numbers of the tori corresponding to CM-fields of degree $4$, and we have seen the numerator of Ono's formula is always trivial for CM-fields of degree $6$. Moreover, using SAGE, one can check that $\Sh_\mathscr{C}(\T)$ is trivial in the latter case. We get the following proposition.

\begin{proposition}\label{prop:deg6} Let $K/\QQ$ be a $CM$-field of degree $6$, then $\tau(\mathbf{T}) = 1$. 
\end{proposition}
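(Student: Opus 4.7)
The plan is to apply Ono's formula $\tau(\T) = |\hat{H}^1(G,\La)|/|\Sh^2(\La)|$ and show both sides equal $1$ for every CM-field $K/\QQ$ of degree $6$. Throughout, let $G = \mathrm{Gal}(K^\sharp/\QQ)$, $N_1 = \mathrm{Gal}(K^\sharp/K^+)$, and $N_2 = \mathrm{Gal}(K^\sharp/K)$. The numerator is handled by the corollary following Lemma~\ref{lem:num:transfer}: because $[K^+:\QQ]=3$ is odd and $\rho$ is central, $\mathrm{tr}^G_{N_1}(\rho)=\rho^{[G:N_1]}=\rho\notin N_2$, so $\hat{H}^1(G,\La)=0$. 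All the remaining work is in showing $\Sh^2(\La)=0$.

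Since $\Sh^2(\La)\subseteq \Sh^2_\mathscr{C}(\La)$, it suffices to show $\Sh^2_\mathscr{C}(\La)=0$. I would reduce to a finite case analysis on the admissible triples $(G,N_1,N_2)$: $N_2$ is core-free of index $6$ in $G$ (so $G$ embeds as a transitive subgroup of $S_6$), $N_1 \supset N_2$ has index $3$, and $G$ has a central involution $\rho\in N_1\setminus N_2$. In the Galois case $|G|=6$, centrality of $\rho$ eliminates $S_3$ (whose center is trivial), leaving only $G=\ZZ/6\ZZ$; its $2$-Sylow is cyclic, so Proposition~\ref{prop:cyclic:denom} immediately yields $\Sh^2(\La)=0$.

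In the non-Galois cases, the admissible subgroups of $S_6$ with nontrivial center containing such an involution and an appropriate chain $N_2\subset N_1$ form a short list (for instance $G = D_{12}$ with $N_2$ a non-central subgroup of order $2$ and $N_1$ the Klein four-subgroup $\langle \rho, N_2\rangle$). For each such triple I would construct $\La=\mathbf{X}^\star(\T)$ via the SAGE routines of Section~3, in the same style as the degree-$4$ computation in Proposition~\ref{prop:deg4}, by taking the quotient of $\ZZ\oplus\ZZ[G/N_2]$ by the image of $\ZZ[G/N_1]$, and then verify $\Sh^2_\mathscr{C}(\La)=0$ using \texttt{Tate\_Shafarevich\_lattice} applied to a flabby resolution.

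The main obstacle is that the clean structural results of Sections~5 and~6 were formulated for the Galois setting with $N$ central, so the non-Galois cases either require extending the transfer-map bookkeeping of Lemma~\ref{lem:transfer:generic} to compute $\hat{H}^2(G,\La)$ for each cyclic restriction directly, or rely on the computer verification. Given how short the list of admissible $(G,N_1,N_2)$ is for degree-$6$ CM-fields, the computational route is quick, which is precisely the approach alluded to in the sentence preceding the proposition.
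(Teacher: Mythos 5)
Your proposal is correct and takes essentially the same route as the paper: the numerator vanishes by the corollary to Lemma~\ref{lem:num:transfer} because $[K^+:\QQ]=3$ is odd (so $\mathrm{tr}^G_{N_1}(\rho)=\rho\notin N_2$), and the paper likewise settles the denominator by verifying $\Sh^2_{\mathscr{C}}(\La)=0$ in SAGE across the finitely many admissible Galois-closure configurations, Galois and non-Galois alike. Your extra observation that the cyclic case $G=\ZZ/6\ZZ$ follows already from Proposition~\ref{prop:cyclic:denom} is a correct (if minor) refinement of the same argument.
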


For degree $8$ extensions, one can easily compute the numerator and $\Sh^2_\mathscr{C}(\T)$; the results are available at \cite{comput_website}. We can notice that there are only $6$ different groups $\mathrm{Gal}(K^\sharp/\QQ)$ such that $\Sh^2_\mathscr{C}(\T) \neq 0$, in all those cases we have $\Sh^2_\mathscr{C}(\T) = \ZZ/2\ZZ$.

\begin{example}\label{ex:random_nongal8} Assume $K$ is the CM-field defined by the polynomial $x^{8} + 8 x^{6} + 17 x^{4} + 9 x^{2} + 1 $. The corresponding Galois group is $C_2^3:S_4$, of order $192$ with action 8T39 on the roots. We get $\tau(\T) = 2$.  The totally real subfield is defined by $x^{4} - x^{3} - 5 x^{2} + 4 x + 3 $ and has Galois group $S_4$. 
\end{example}

\begin{example}\label{ex:1galois:3ext} Let $f_1, f_2, f_3$ the three following polynomials:
\begin{align*}
f_1 &= x^8+14x^6+36x^4+28x^2+4,\\
f_2 &= x^{8} + 28 x^{6} + 250 x^{4} + 868 x^{2} + 961,\\
f_3 &= x^{8} + 14 x^{6} + 39 x^{4} + 32 x^{2} + 8.
\end{align*}
Let $K_1, K_2, K_3$ be the corresponding CM-fields with respective tori $\T_1, \T_2,$ and $\T_3$. All have Galois group $\mathrm{Gal}(K_i^\sharp/\QQ)= C_2^2.D_4$ with label $(32, 6)$, however $\tau(\T_1)=2$ and $\tau(\T_2) = \tau(\T_3) = 1$. Indeed, despite the extensions having the same Galois group, they correspond to a different transitive action on  $8$ points, respectively 8T19, 8T20, 8T21. Here both $K_2^+$ and $K_3^+$ are Galois, with respective Galois groups  $\ZZ/4\ZZ$ and $(\ZZ/2\ZZ)^2$. The field $K_1^+$ is not Galois, the Galois group of its Galois closure is $D_4$. 
\end{example}

\begin{example}\label{ex:num1den2}  Let $K$ be the non-Galois CM-field defined by 
\[f = x^{8} - 3 x^{7} + 27 x^{6} - 85 x^{5} + 331 x^{4} - 690 x^{3} + 1513 x^{2} - 1694 x + 1801.\]
The Galois group of its closure is the Semidihedral group $QD_{16} = Q_8\rtimes \ZZ/2\ZZ$. Every decomposition group here is cyclic, hence using the table for extensions of degree 8 in \cite{comput_website} we get that $\tau(\T) = \frac{1}{2}$. This example is particularly interesting because in the Galois case, Proposition \ref{prop:cyclic:denom} tells us that if the numerator of Ono's formula equals $1$, then so does the denominator. However in this non-Galois case, we can have trivial $\hat{H}^1(\QQ, \mathbf{X}^\star(\T))$ but not $\Sh^1(\T)$. 
\end{example}
\subsection{Tori split over a CM-\'Etale algebra}

Let $K = \bigoplus_{i=1}^r K_i$ be an \'etale algebra over $\QQ$ with the intermediate \'etale subalgebra $K^+ = \bigoplus_{j=1}^m K^+_j$. We can again define the tori

\[ \mathbf{T}^K = \mathrm{Ker}\left( \mathbb{G}_m\times_{\mathrm{Spec}(\QQ)}\mathbf{R}_{K/\QQ}(\mathbb{G}_m)\underset{(x, y)\mapsto x^{-1}N_{K/K^+}(y)}{\longrightarrow} \mathbf{R}_{K^+/\QQ}(\mathbb{G}_m)\right), \]
and 
\[\T_1^K =  \mathrm{Ker}\left( \mathbf{R}_{K/\mathbb{Q}}(\mathbb{G}_m)\underset{N_{K/K^+}}{\longrightarrow} \mathbf{R}_{K^+/\mathbb{Q}}(\mathbb{G}_m)\right)= \res_{K^+/\QQ}\res_{K/K^+}^{(1)}(\mathbb{G}_m).\]

Assume that $K$ is a CM-\'etale algebra, with complex involution $\rho$ and $K^+$ is the \'etale algebra fixed by $\rho$. Since each $K_i^+$ is totally real and $\rho$ is the complex conjugation, we have $r=m$, and for each $1\leq i\leq r$ we can define  

\[ \mathbf{T}^{K_i} = \mathrm{Ker}\left( \mathbb{G}_m\times_{\mathrm{Spec}(\QQ)}\mathbf{R}_{K_i/\QQ}(\mathbb{G}_m)\underset{(x, y)\mapsto x^{-1}N_{K_i/K_i^+}(y)}{\longrightarrow} \mathbf{R}_{K_i^+/\QQ}(\mathbb{G}_m)\right), \]
and 
\[\T_1^{K_i} =  \mathrm{Ker}\left( \mathbf{R}_{K_i/\mathbb{Q}}(\mathbb{G}_m)\underset{N_{K_i/K_i^+}}{\longrightarrow} \mathbf{R}_{K_i^+/\mathbb{Q}}(\mathbb{G}_m)\right)= \res_{K_i^+/\QQ}\res_{K_i/K_i^+}^{(1)}(\mathbb{G}_m).\]

We have $\T_1^K = \prod_{i=1}^r \T_1^{K_i}$, and $\T_1^K\subset \T^K \subset \prod_{i=1}^r\T^{K_i}$ with corresponding exact sequences 
\begin{equation} 1\rightarrow \T_1^K \rightarrow \T^K \rightarrow \mathbb{G}_m\rightarrow 1,
\end{equation}
and
\begin{equation}1 \rightarrow \T^K \rightarrow  \prod_{i=1}^r\T^{K_i}\rightarrow \mathbb{G}_m^{r-1}\rightarrow 1.
\end{equation}

The respective exact sequences of character lattices are 

\begin{equation}\label{eq:etale:t0} 0\rightarrow \ZZ \rightarrow \mathbf{X}^\star(\T^K) \rightarrow \mathbf{X}^\star(\T_1^K)\rightarrow 0,
\end{equation}
and
\begin{equation}\label{eq:etale:prod}1 \rightarrow \ZZ^{r-1} \rightarrow  \bigoplus_{i=1}^r\mathbf{X}^\star(\T^{K_i})\rightarrow \mathbf{X}^\star(\T^K)\rightarrow 1.
\end{equation}

Thanks to the previous section we have computed \[\hat{H}^1(\QQ, \mathbf{X}^\star(\T_1^K))= \bigoplus_{i=1}^r \hat{H}^1(\QQ, \mathbf{X}^\star(\T_1^{K_i})) = (\ZZ/2\ZZ)^r\] and Theorem \ref{thm:h1nongal} gives us a formula for $\bigoplus_{i=1}^r\hat{H}^1(\QQ, \mathbf{X}^\star(\T^{K_i})).$

The cohomology of (\ref{eq:etale:t0}) and (\ref{eq:etale:prod}) gives us an estimate of the numerator of  $\tau(\mathbf{T})$.

We have the inequality
\[ \prod_{i=1}^r|\hat{H}^1(\QQ, \mathbf{X}^\star(\T^{K_i}))|\leq |\hat{H}^1(\QQ, \mathbf{X}^\star(\T^K))|\leq  2^r.\]

Let $K^\sharp$ be a Galois extension of $\QQ$ splitting every $\T^{K_i}$, and hence splitting $\T^K$ too. Let $G = \mathrm{Gal}(K^\sharp/\QQ)$. For each $i\in \{1, \cdots, r\}$ we let $K_i^\sharp$ denote the Galois closure of $K_i$, and $K_i^+$ its maximal totally real subfield. We write $G_i = \mathrm{Gal}(K_i^\sharp/\QQ)$, $H_i = \mathrm{Gal}(K^\sharp/K_i^\sharp)$ and $N_i = \mathrm{Gal}(K_i^\sharp/K_i)$.

Continuing with the cohomology of (\ref{eq:etale:prod}), we get 
\[0\rightarrow \hat{H}^1(G, \mathbf{X}^\star(\T^K))\rightarrow \hat{H}^1(G, \mathbf{X}^\star(\T^K_0)) = \underset{= (\ZZ/2\ZZ)^r}{\underbrace{\bigoplus_{i=1}^r \hat{H}^1(G, \mathbf{X}^\star(\T_1^{K_i}))}}\rightarrow \hat{H}^2(G, \ZZ).\]
We have computed the rightmost arrow in the previous section for a single CM-field, this time we have to consider the sum of those mappings. If $g\in G$ and $1\leq i\leq r$, we let 
\[\nu_g^i = \frac{\left|\langle gH_i \rangle \backslash (G/H_i)/N_i\right|}{2} = \frac{\left|\langle gH_i \rangle \backslash G_i/N_i\right|}{2}\in \QQ/\ZZ.\]
Let $\mathbf{k} = (k_1,\cdots, k_r)\in (\ZZ/2\ZZ)^r$, and define $\varphi_{\mathbf{i}}\in \mathrm{Hom}(G, \QQ/\ZZ)$ by $\varphi_{\mathbf{k}}(g) := \sum_{i=1}^r k_i\nu^i_{g}.$ To sum up the results of Theorem \ref{thm:h1nongal}, we get that 
\begin{lemma}\[\hat{H}^1(G, \mathbf{X}^\star(\T)) = \mathrm{Ker}(\varphi),\text{ where }\varphi : \left\lbrace\begin{array}{l}(\ZZ/2\ZZ)^r\rightarrow \mathrm{Hom}(G, \QQ/\ZZ)\\ \mathbf{k}\mapsto \varphi_{\mathbf{k}}\end{array}\right..\] 
\end{lemma}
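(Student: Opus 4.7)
The plan is to identify the connecting homomorphism $\delta$ appearing in the exact sequence
$0 \to \hat{H}^1(G, \mathbf{X}^\star(\T^K)) \to \hat{H}^1(G, \mathbf{X}^\star(\T_1^K)) \xrightarrow{\delta} \hat{H}^2(G, \ZZ)$
obtained from (\ref{eq:etale:t0}) and Hilbert 90 with the map $\mathbf{k} \mapsto \varphi_{\mathbf{k}}$. Since $\T_1^K = \prod_{i=1}^r \T_1^{K_i}$, one has $\delta = \bigoplus_{i=1}^r \delta_i$, where each $\delta_i$ is the connecting map attached to $0 \to \ZZ \to \mathbf{X}^\star(\T^{K_i}) \to \mathbf{X}^\star(\T_1^{K_i}) \to 0$. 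Thus it suffices to show that $\delta_i$ sends the generator $\alpha_i$ of $\hat{H}^1(G, \mathbf{X}^\star(\T_1^{K_i})) \cong \ZZ/2\ZZ$ to the character $g \mapsto \nu_g^i$.

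To establish this formula I would repeat the proofs of Proposition \ref{prop:h1der:nongal} and Lemma \ref{lem:num:transfer} for $\T^{K_i}$ viewed as a $G$-torus split by $K^\sharp$, replacing $(N_1, N_2)$ by the subgroups $\tilde{N}_1^i := \mathrm{Gal}(K^\sharp/K_i^+)$ and $\tilde{N}_2^i := \mathrm{Gal}(K^\sharp/K_i)$ of $G$. Complex conjugation $\rho$ still lies in $\tilde{N}_1^i \setminus \tilde{N}_2^i$, so the same derivation identifies $\alpha_i$ with the unique character of $\tilde{N}_1^i/\tilde{N}_2^i$ of order $2$ and yields $\delta_i(\alpha_i) = \alpha_i \circ \mathrm{tr}^G_{\tilde{N}_1^i}$. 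The cycle-counting argument from the proof of Theorem \ref{thm:h1nongal} (illustrated in Example \ref{ex:postsigma}) now applies verbatim at the level of $G$ and gives $\alpha_i(\mathrm{tr}^G_{\tilde{N}_1^i}(g)) = \tfrac{1}{2}|\langle g\rangle \backslash G/\tilde{N}_2^i| \pmod 1$. Since $H_i \subset \tilde{N}_2^i$, quotienting by $H_i$ induces a bijection $\langle g\rangle \backslash G/\tilde{N}_2^i \leftrightarrow \langle gH_i\rangle \backslash G_i/N_2^i$, so this equals $\nu_g^i$; summing over $i$ gives $\delta(\mathbf{k})(g) = \sum_i k_i \nu_g^i = \varphi_{\mathbf{k}}(g)$, as required.

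The step that deserves attention — more a bookkeeping check than a genuine obstacle — is transporting the cycle-counting computation from the single-field setting of Theorem \ref{thm:h1nongal} (where it was phrased over the Galois closure $K_i^\sharp$) up to the present common splitting field $K^\sharp$. The original argument only relied on the centrality of $\rho$ and the cycle decomposition of the $\langle g\rangle$-action on $G/\tilde{N}_1^i$, both of which make sense unchanged at the level of $G$. The identification $|\langle g\rangle \backslash G/\tilde{N}_2^i| = |\langle gH_i\rangle \backslash G_i/N_2^i|$ is exactly what allows one to absorb the passage from the Galois closure to the common splitting field into the definition of $\nu_g^i$ itself.
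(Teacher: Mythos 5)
Your proposal is correct and follows essentially the same route as the paper, whose own ``proof'' is just the surrounding discussion: the long exact sequence coming from (\ref{eq:etale:t0}) (the paper's text cites (\ref{eq:etale:prod}) but the displayed sequence is the one you use), the identification of the connecting map $\delta$ with the sum of the single-field connecting maps, and the evaluation of each summand via Theorem \ref{thm:h1nongal}. You in fact supply more detail than the paper, which contents itself with ``to sum up the results of Theorem \ref{thm:h1nongal}.''

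One step needs a one-line repair. You run the transfer/cycle-counting computation of Theorem \ref{thm:h1nongal} directly over $G=\mathrm{Gal}(K^\sharp/\QQ)$ and assert that centrality of $\rho$ ``makes sense unchanged at the level of $G$''; but the paper only assumes $K^\sharp$ is \emph{some} Galois extension splitting every $\T^{K_i}$, and for such a field the restriction of complex conjugation need not be central (nor even well defined independently of the chosen embedding) --- centrality holds precisely when $K^\sharp$ is CM. The fix: either take $K^\sharp$ to be the compositum of the $K_i^\sharp$, which is again a CM-field, noting that this costs nothing because every term in the lemma is inflation-invariant ($\hat{H}^1(G,\mathbf{X}^\star(\T))$ and the $\hat{H}^1(G,\mathbf{X}^\star(\T_1^{K_i}))$ do not change under enlarging the splitting field, and $\nu^i_g$ depends only on $gH_i\in G_i$); or avoid the issue entirely by computing $\delta_i$ over $G_i$, where Theorem \ref{thm:h1nongal} applies as stated, and inflating by naturality of connecting homomorphisms --- this is what the paper's definition $\nu^i_g=\frac{1}{2}\left|\langle gH_i\rangle\backslash G_i/N_i\right|$ implicitly does, and your double-coset bijection $\left|\langle g\rangle\backslash G/\tilde{N}_2^i\right|=\left|\langle gH_i\rangle\backslash G_i/N_i\right|$ (valid since $H_i\subset \tilde{N}_2^i$ and $H_i$ is normal in $G$) shows the two computations agree. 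Separately, your claim $\delta=\bigoplus_i\delta_i$ is true but does not follow from the product decomposition $\T_1^K=\prod_i\T_1^{K_i}$ alone: it follows from the morphism of short exact sequences from $0\to\ZZ^r\to\bigoplus_i\mathbf{X}^\star(\T^{K_i})\to\bigoplus_i\mathbf{X}^\star(\T_1^{K_i})\to 0$ to (\ref{eq:etale:t0}), with coordinate summation on the left and the quotient map of (\ref{eq:etale:prod}) in the middle, since each multiplier character maps to the multiplier character of $\T^K$. With these two clarifications your argument is complete.
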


We can recover previous computations of $\hat{H}^1(G, \mathbf{X}^\star(\T^{K_i})) = \mathrm{Ker}(\varphi \circ \eta_i)$ where $\eta_i : \ZZ/2\ZZ\rightarrow (\ZZ/2\ZZ)^r$ is the injection in the $i$th coordinate.  In particular, we again have \[\hat{H}^1(G, \mathbf{X}^\star(\prod_{i=1}^r\T^{K_i}))\bigoplus_{i=1}^r \mathrm{Ker}(\varphi\circ \eta_i) \subset \mathrm{Ker}(\varphi) = \hat{H}^1(G, \mathbf{X}^\star(\T)).\]

\begin{corollary}\label{cor:pre_split_num}
Assume $K = \bigoplus_{i=1}^r (K_i)^{\oplus j_i}$ for some $j_1, \cdots, j_r\in \mathbb{N}$. Define $I = \{1\leq i\leq r : \hat{H}^1(G, \mathbf{X}^\star(\mathbf{T}^{K_i})) = 0\}$. Then we get 
\[ \bigoplus_{i\notin I} (\ZZ/2\ZZ)^{j_i}\oplus \bigoplus_{i\in I}(\ZZ/2\ZZ)^{j_i-1}\leq \hat{H}^1(G, \mathbf{X}^\star(\T)) \leq \bigoplus_{i=1}^r(\ZZ/2\ZZ)^{j_i}.\] 
\end{corollary}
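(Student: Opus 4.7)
The plan is to derive the corollary directly from the Lemma just before it, which identifies $\hat{H}^1(G, \mathbf{X}^\star(\T^K))$ with $\ker(\varphi)$ for an explicit linear map $\varphi\colon (\ZZ/2\ZZ)^{N} \to \mathrm{Hom}(G, \QQ/\ZZ)$, where $N = \sum_i j_i$ is the total number of field summands of $K$.

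First I would index the coordinates of $(\ZZ/2\ZZ)^N$ by pairs $(i,a)$ with $1\leq i\leq r$ and $1\leq a\leq j_i$, one pair per summand of $K = \bigoplus_i K_i^{\oplus j_i}$. The key observation is that the character $\nu_g^i$ appearing in the formula for $\varphi$ depends only on the isomorphism class $K_i$ (since $H_i$ and $N_i$ do), not on which copy is chosen. Thus
\[\varphi(\mathbf{k})(g) \;=\; \sum_{i=1}^r \Bigl(\sum_{a=1}^{j_i} k_{i,a}\Bigr)\nu_g^i,\]
which exhibits a factorization $\varphi = \tilde{\varphi}\circ \pi$, where $\pi\colon (\ZZ/2\ZZ)^N\to (\ZZ/2\ZZ)^r$ is the block-sum map $(k_{i,a})\mapsto(\sum_a k_{i,a})_i$, and $\tilde{\varphi}$ is the map supplied by the same Lemma applied to the non-repeated \'etale algebra $\bigoplus_i K_i$.

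Next I would translate the definition of $I$ through this factorization. By the discussion preceding the Lemma, $\hat{H}^1(G,\mathbf{X}^\star(\T^{K_i})) = \ker(\tilde{\varphi}\circ \eta_i)$, so $i\in I$ is equivalent to $\tilde{\varphi}\circ\eta_i$ being injective, i.e.\ $\nu^i \neq 0$ in $\mathrm{Hom}(G,\QQ/\ZZ)$. Symmetrically, $i\notin I$ is equivalent to $\nu^i = 0$, so $\tilde{\varphi}$ is identically zero on the $i$th coordinate of $(\ZZ/2\ZZ)^r$. The upper bound of the corollary is now immediate, since $\ker(\varphi)$ is just a subgroup of the ambient $(\ZZ/2\ZZ)^N$.

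For the lower bound, I would exhibit the subgroup
\[U \;=\; \Bigl\{(k_{i,a}) : \sum_{a=1}^{j_i} k_{i,a} = 0 \text{ for every } i\in I\Bigr\} \;\leq\; (\ZZ/2\ZZ)^N.\]
For $\mathbf{k}\in U$, the image $\pi(\mathbf{k})$ is supported on indices in $\{1,\ldots,r\}\setminus I$, where $\tilde{\varphi}$ vanishes, so $U\subset \ker(\varphi)$. A direct dimension count gives $U \cong \bigoplus_{i\notin I}(\ZZ/2\ZZ)^{j_i}\oplus \bigoplus_{i\in I}(\ZZ/2\ZZ)^{j_i-1}$, each $i\in I$ losing one degree of freedom to the single linear constraint. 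There is essentially no obstacle: once the factorization through $\pi$ is observed, the bounds are pure bookkeeping, and the only subtlety is getting the two sides of the dichotomy defining $I$ pointed the right way.
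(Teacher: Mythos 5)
Your proposal is correct and follows essentially the same route as the paper: the paper's proof likewise restricts $\varphi$ blockwise to the coordinates of each $K_i^{\oplus j_i}$ (using that all copies share the same character $\nu^i$, which vanishes exactly when $i\notin I$), and identifies the blockwise kernel as the full block $(\ZZ/2\ZZ)^{j_i}$ for $i\notin I$ and the zero-sum hyperplane $(\ZZ/2\ZZ)^{j_i-1}$ for $i\in I$. Your factorization $\varphi = \tilde{\varphi}\circ\pi$ through the block-sum map is merely a more explicit packaging of that same computation, with the subgroup $U$ coinciding with the paper's $\bigoplus_i \mathrm{Ker}(\tilde{\varphi}^i)$.
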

\begin{proof} For each $i$, let $\tilde{\varphi}^i$ denote the restriction of $\varphi$ to the labels corresponding to $K_i^{j_i}$. 

Let $1\leq i\leq r$.  If $\hat{H}^1(G, \mathbf{X}^\star(\mathbf{T}^{K_i})) = \ZZ/2\ZZ$, then $\tilde{\phi}^i$ is identically zero, hence $(\ZZ/2\ZZ)^{j_i}=\mathrm{Ker}(\tilde{\varphi}^i)$. If however $\hat{H}^1(G, \mathbf{X}^\star(\mathbf{T}^{K_i})) = 0$, then $\tilde{\varphi}^i$ will send every zero-sum vector  of $(\ZZ/2\ZZ)^{j_i}$  to $0$, hence $(\ZZ/2\ZZ)^{j_i-1} = \mathrm{Ker}(\tilde{\varphi}^i)$.
\end{proof}
\begin{corollary}\label{cor:split:num}
Assume $K = \bigoplus_{i=1}^r (K_i)^{\oplus j_i}$ for some $j_1, \cdots, j_r\in \mathbb{N}$ such that $\mathrm{Gal}(K^\sharp/\QQ) = \prod_{i=1}^r\mathrm{Gal}(K_i^\sharp/\QQ)$. We get 
\[\hat{H}^1(G, \mathbf{X}^\star(\T)) = \bigoplus_{i=1}^r (\ZZ/2\ZZ)^{j_i-1}\hat{H}^1(G_i, \mathbf{X}^\star(\mathbf{T}^{K_i})).\] 
\end{corollary}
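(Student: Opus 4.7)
The plan is to leverage the kernel description of $\hat{H}^1(G,\mathbf{X}^\star(\T))$ from the preceding lemma, namely $\hat{H}^1(G,\mathbf{X}^\star(\T)) = \mathrm{Ker}(\varphi)$ where $\varphi : (\ZZ/2\ZZ)^{\sum j_i} \to \mathrm{Hom}(G,\QQ/\ZZ)$ sends $\mathbf{k}\mapsto \varphi_{\mathbf{k}}$. Under the direct-product hypothesis $G=\prod_{i=1}^r G_i$, I expect $\varphi$ to split as a direct sum of the analogous maps for each $K_i$, which immediately yields the stated formula.

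First I would note that $\nu_g^i$ depends only on the image of $g$ in $G_i = G/H_i$, so as a character $\nu^i \in \mathrm{Hom}(G,\QQ/\ZZ)$ it factors through the projection $G \to G_i$. Under the product decomposition $G=\prod_j G_j$ the dual group decomposes as $\mathrm{Hom}(G,\QQ/\ZZ) = \bigoplus_{j=1}^r \mathrm{Hom}(G_j,\QQ/\ZZ)$, and $\nu^i$ lies entirely in the $i$-th summand. Next I would group the $\sum j_i$ coordinates of the source into $r$ blocks of sizes $j_1,\dots,j_r$, so that $(\ZZ/2\ZZ)^{\sum j_i} = \bigoplus_i (\ZZ/2\ZZ)^{j_i}$. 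The restriction of $\varphi$ to the $i$-th block takes the explicit form $\varphi^i(k_{i,1},\dots,k_{i,j_i}) = \bigl(\sum_\ell k_{i,\ell}\bigr)\nu^i$ because all $j_i$ copies of $K_i$ contribute the same character $\nu^i$. Since the images of different blocks land in different summands of $\mathrm{Hom}(G,\QQ/\ZZ)$, we get the block decomposition $\mathrm{Ker}(\varphi) = \bigoplus_{i=1}^r \mathrm{Ker}(\varphi^i)$.

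Finally, I would compute each kernel directly. If $\nu^i=0$ then $\varphi^i=0$ and $\mathrm{Ker}(\varphi^i) = (\ZZ/2\ZZ)^{j_i}$; if $\nu^i \neq 0$ then $\mathrm{Ker}(\varphi^i)$ is the zero-sum hyperplane, which is isomorphic to $(\ZZ/2\ZZ)^{j_i-1}$. By Theorem \ref{thm:h1nongal} applied to $G_i$, these two alternatives correspond respectively to $\hat{H}^1(G_i,\mathbf{X}^\star(\mathbf{T}^{K_i})) = \ZZ/2\ZZ$ and $\hat{H}^1(G_i,\mathbf{X}^\star(\mathbf{T}^{K_i}))=0$; in both cases one can package the answer uniformly as
\[\mathrm{Ker}(\varphi^i) \;=\; (\ZZ/2\ZZ)^{j_i-1}\oplus \hat{H}^1(G_i,\mathbf{X}^\star(\mathbf{T}^{K_i})).\]
Summing over $i$ delivers the formula. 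The only subtlety worth checking carefully is the identification of $\mathrm{Hom}(G,\QQ/\ZZ)$ with the direct sum of the $\mathrm{Hom}(G_i,\QQ/\ZZ)$ and the fact that $\nu^i$ genuinely factors through the $i$-th projection—both of which follow immediately from the product structure—so there is no serious obstacle: the hypothesis $G = \prod G_i$ is precisely what decouples the blocks and reduces the computation to the single-field cases already handled.
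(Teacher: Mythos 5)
Your proof is correct and is essentially the paper's own argument, made more explicit: the hypothesis $G=\prod_i G_i$ forces each character $\nu^i$ to lie in the $i$-th summand of $\mathrm{Hom}(G,\QQ/\ZZ)=\bigoplus_i \mathrm{Hom}(G_i,\QQ/\ZZ)$, so $\mathrm{Ker}(\varphi)$ decomposes blockwise, and each block is computed exactly as in the proof of Corollary \ref{cor:pre_split_num} ($\nu^i=0$ gives the full block $(\ZZ/2\ZZ)^{j_i}$, $\nu^i\neq 0$ gives the zero-sum hyperplane $(\ZZ/2\ZZ)^{j_i-1}$, with the dichotomy matched to $\hat{H}^1(G_i,\mathbf{X}^\star(\mathbf{T}^{K_i}))$ via Theorem \ref{thm:h1nongal}). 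Your version spells out the details the paper leaves implicit, but there is no difference in method.
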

\begin{proof}
In that case, the triviality of each $\varphi_\mathbf{i}$ can be tested independently on each quotient $G/H_i = G_i$. Therefore, following the notations of the proof of Corollary  \ref{cor:pre_split_num}, we have $\prod_{i=i}^r\mathrm{Ker}(\tilde{\varphi}^{i}) = \mathrm{Ker}(\varphi)$.
\end{proof}

\begin{example}\label{ex:etale:power}
Let $K = (K_1)^{\oplus r}$ where $K_1/\QQ$ is a Galois CM-field with Galois group $G= G_1$. By Corollary \ref{cor:split:num}  and Theorem \ref{thm:numerator} we know that $\hat{H}^1(\QQ, \mathbf{X}^\star(\T)) = (\ZZ/2\ZZ)^{r-1}$ if the $2$-Sylow subgroups of $G$ are cyclic, otherwise $\hat{H}^1(\QQ, \mathbf{X}^\star(\T)) = (\ZZ/2\ZZ)^r$. Since $K_1$ is Galois, we have $\hat{H}^2(G, \mathbf{X}^\star(\T_1^K)) = \hat{H}^2(G, \mathbf{X}^\star(\T_1))^{\oplus r}  =0$ by Corollary \ref{cor:cohom:ld}. Therefore the cohomology of (\ref{eq:etale:t0}) yields 
\[0\rightarrow \hat{H}^1(G, \mathbf{X}^\star(\T^K))\rightarrow \hat{H}^2(G, \mathbf{X}^\star(\T_1^K))\rightarrow \hat{H}^2(G, \ZZ)\rightarrow \hat{H}^2(G, \mathbf{X}^\star(\T^K))\rightarrow 0.    \]
The first arrow has trivial cokernel if $G_2$ is non-cyclic, else it has cokernel $\ZZ/2\ZZ$. Therefore, we get  $\hat{H}^2(G, \mathbf{X}^\star(\T^K)) = \hat{H}^2(G, \ZZ)$ if $G_2$ is not cyclic, else we get $\hat{H}^2(G, \mathbf{X}^\star(\T^K)) = \hat{H}^2(G, \ZZ)/N_1$. The same logic works for the cohomology of any subgroup of $G$. Therefore, we get $\Sh^2(\T^K) = \Sh^2(\T^{K_1})$. We can conclude
\[\tau(\T^K) = 2^{r-1}\tau(\T^{K_1}).\]
The exact same resonning gives us the following proposition. 
\end{example}

\begin{proposition}\label{prop:split:gal}
Assume $K = \bigoplus_{i=1}^r (K_i)^{\oplus j_i}$ for some $j_1, \cdots, j_r\in \mathbb{N}$ such that each $K_i$ is a Galois CM-field and $\mathrm{Gal}(K/\QQ) = \prod_{i=1}^r\mathrm{Gal}(K_i/\QQ)$. We get 
\[\tau(\T^K) = \prod_{i=1}^r 2^{j_i-1}\tau(\T^{K_i}).\] \end{proposition}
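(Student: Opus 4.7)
The plan is to mimic the argument of Example \ref{ex:etale:power}, applying it component-by-component and exploiting the product hypothesis $G := \mathrm{Gal}(K^\sharp/\QQ) = \prod_{i=1}^r G_i$, which makes the cohomological computations factor.

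For the numerator of Ono's formula, Corollary \ref{cor:split:num} directly produces
\[|\hat{H}^1(\QQ, \mathbf{X}^\star(\T^K))| = \prod_{i=1}^r 2^{j_i-1}\cdot |\hat{H}^1(G_i, \mathbf{X}^\star(\T^{K_i}))|.\]
So the numerator already decomposes as a product of a Galois-CM single-field piece times the factor $2^{j_i-1}$ counting the multiplicities.

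For the denominator, I follow the reasoning of Example \ref{ex:etale:power}. The lattice of the auxiliary torus decomposes as $\mathbf{X}^\star(\T_1^K) = \bigoplus_\alpha \mathbf{X}^\star(\T_1^{K_{(\alpha)}})$, where $\alpha$ indexes the $J = \sum_i j_i$ field summands. Under the product hypothesis on $G$, each summand $\mathbf{X}^\star(\T_1^{K_i})$, viewed as a $G$-module via the projection to $G_i$, is induced from the preimage $\tilde{N}_i = N_i \times \prod_{j\neq i}G_j$ of $N_i$ in $G$. Shapiro's lemma together with Corollary \ref{cor:cohom:ld} then yields the vanishing $\hat{H}^2(S, \mathbf{X}^\star(\T_1^K)) = 0$ for every subgroup $S\leq G$ relevant to the Tate--Shafarevich computation (the Galois hypothesis on each $K_i$ guarantees $\iota_i \in \tilde N_i$, so the cyclic-group formula gives zero in even degree). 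Taking cohomology of the short exact sequence (\ref{eq:etale:t0}) over each such $S$ then expresses $\hat{H}^2(S, \mathbf{X}^\star(\T^K))$ as a quotient of $\hat{H}^2(S, \ZZ)$ with cokernel controlled exactly as in Example \ref{ex:etale:power}. Because $G = \prod_i G_i$, decomposition-group restrictions project compatibly onto decomposition groups in each $G_i$, and the diagram chase factors across $i$. This gives the key identity
\[\Sh^2(\mathbf{X}^\star(\T^K)) \cong \prod_{i=1}^r \Sh^2(\mathbf{X}^\star(\T^{K_i})),\]
with no extra factor of $j_i$, exactly as in the single-field case of Example \ref{ex:etale:power}.

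Dividing numerator by denominator via Ono's formula then gives
\[\tau(\T^K) \;=\; \frac{\prod_i 2^{j_i-1}\, |\hat{H}^1(G_i, \mathbf{X}^\star(\T^{K_i}))|}{\prod_i |\Sh^2(\mathbf{X}^\star(\T^{K_i}))|} \;=\; \prod_{i=1}^r 2^{j_i-1}\,\tau(\T^{K_i}).\]
The main obstacle will be the denominator step: carefully verifying that the vanishing of $\hat{H}^2(S, \mathbf{X}^\star(\T_1^K))$ and the subsequent diagram chase genuinely factor across the product $G = \prod_i G_i$ for every decomposition group $S$. The induced-module description via $\tilde N_i$ and Shapiro's lemma should make this routine, but it is the only place where the product hypothesis on the Galois group is actually used in a nontrivial cohomological way.
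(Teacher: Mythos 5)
Your numerator step is fine and coincides with the paper's (Corollary \ref{cor:split:num} applies verbatim, since each $K_i$ being Galois makes $K_i^\sharp=K_i$, so the product hypothesis of that corollary is exactly the hypothesis of the proposition). The genuine gap is in your denominator step: the vanishing $\hat{H}^2(S,\mathbf{X}^\star(\T_1^K))=0$ that you claim for every relevant subgroup $S\leq G$ is \emph{false} as soon as $r\geq 2$ distinct fields occur. Your induced description is correct --- as a $G$-module, $\mathbf{X}^\star(\T_1^{K_i})\cong \mathrm{Ind}_{\tilde{N}_i}^{G}\,\ZZ^{-}$ with $\tilde{N}_i=\langle\iota_i\rangle\times\prod_{j\neq i}G_j$ --- but the inducing module $\ZZ^{-}$ is $\ZZ$ with $\tilde{N}_i$ acting through the sign character whose kernel is $\tilde{H}_i:=\prod_{j\neq i}G_j=\mathrm{Gal}(K^\sharp/K_i)$, and Mackey--Shapiro gives $\hat{H}^2(S,\mathrm{Ind}_{\tilde{N}_i}^G\ZZ^-)=\bigoplus_{x\in S\backslash G/\tilde{N}_i}\hat{H}^2\bigl(S\cap x\tilde{N}_ix^{-1},\ZZ^-\bigr)$, which is \emph{nonzero} whenever some intersection lies in the kernel of the sign character: there the coefficient module is trivial and $\hat{H}^2(T,\ZZ)=\mathrm{Hom}(T,\QQ/\ZZ)\neq 0$. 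The minimal counterexample is $K=K_1\oplus K_2$ with $K_1,K_2$ distinct imaginary quadratic fields: $G=(\ZZ/2\ZZ)^2$, $\mathbf{X}^\star(\T_1^{K_1})$ is $\ZZ$ with $G$ acting through the character $\chi_1$ of kernel $S:=\mathrm{Gal}(K^\sharp/K_1)$, and $\hat{H}^2(S,\mathbf{X}^\star(\T_1^{K_1}))=\mathrm{Hom}(S,\QQ/\ZZ)=\ZZ/2\ZZ\neq 0$, where $S$ is cyclic and hence \emph{is} a decomposition group by Chebotarev (one checks similarly that $\hat{H}^2(G,\ZZ_{\chi_1})=\ZZ/2\ZZ$ as well).

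The reason the single-field mechanism does not transport is that Corollary \ref{cor:cohom:ld} is a statement about the sublattice $L_1$, and the even-degree vanishing for $\La_1$ used in Example \ref{ex:etale:power} comes from the dimension shift $\hat{H}^j(S,\La_1)=\hat{H}^{j+1}(S,L_1)$, which requires the middle term of $0\to L_1\to\ZZ[G]\to\La_1\to 0$ to be cohomologically trivial; after inflating along $G\to G_i$ the ambient module becomes the permutation module $\ZZ[G/\tilde{H}_i]$, which is not cohomologically trivial, and the vanishing is destroyed. Consequently your exact sequence no longer terminates in $\hat{H}^2(S,\ZZ)\to\hat{H}^2(S,\mathbf{X}^\star(\T^K))\to 0$; there are extra classes mapping onto $\hat{H}^2(S,\mathbf{X}^\star(\T_1^K))$, so the identification of $\hat{H}^2(S,\mathbf{X}^\star(\T^K))$ as a quotient of $S^{\mathrm{ab}}$ --- and with it your key identity $\Sh^2(\mathbf{X}^\star(\T^K))\cong\prod_i\Sh^2(\mathbf{X}^\star(\T^{K_i}))$ --- is not established by the argument as written. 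To be fair, the paper itself dispatches this proposition with the single sentence that ``the exact same reasoning'' as Example \ref{ex:etale:power} applies; but that example treats a power of \emph{one} Galois field, where $G=G_1$ and no inflation occurs. You correctly flagged the factorization across $G=\prod_i G_i$ as the crux, but it is not routine: one must additionally control the cross-term classes (e.g., by a Künneth-type decomposition of $\hat{H}^\bullet(G,\mathbf{X}^\star(\T^K))$ and of the restriction maps to decomposition groups, showing these extra classes cannot survive in $\Sh^2$), and this is precisely the step your proposal leaves unproved.
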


\newpage

\bibliographystyle{amsalpha}
\bibliography{biblio}

\newcommand{\etalchar}[1]{$^{#1}$}
\providecommand{\bysame}{\leavevmode\hbox to3em{\hrulefill}\thinspace}
\providecommand{\MR}{\relax\ifhmode\unskip\space\fi MR }
\providecommand{\MRhref}[2]{%
  \href{http://www.ams.org/mathscinet-getitem?mr=#1}{#2}
}
\providecommand{\href}[2]{#2}
\begin{thebibliography}{{LMF}20}

\bibitem[AAG{\etalchar{+}}19]{abvarcount}
Jeff Achter, Salim~Ali Altug, Julia Gordon, Wen-Wei Li, and Thomas Rüd,
  \emph{Counting abelian varieties over finite fields via frobenius densities},
  2019.

\bibitem[Bro94]{brown}
Kenneth~S. Brown, \emph{Cohomology of groups}, Graduate Texts in Mathematics,
  vol.~87, Springer-Verlag, New York, 1994, Corrected reprint of the 1982
  original. \MR{1324339}

\bibitem[Cor97]{cortella}
Anne Cortella, \emph{The {H}asse principle for the similarities of bilinear
  forms}, Algebra i Analiz \textbf{9} (1997), no.~4, 98--118. \MR{1604012}

\bibitem[GSY19]{guo}
Jia-Wei Guo, Nai-Heng Sheu, and Chia-Fu Yu, \emph{Class numbers of cm algebraic
  tori, cm abelian varieties and components of unitary shimura varieties},
  2019.

\bibitem[How96]{howe}
Everett~W. Howe, \emph{Constructing distinct curves with isomorphic jacobians},
  Journal of Number Theory \textbf{56} (1996), no.~2, 381 -- 390.

\bibitem[HY17]{hoshi}
Akinari Hoshi and Aiichi Yamasaki, \emph{Rationality problem for algebraic
  tori}, Mem. Amer. Math. Soc. \textbf{248} (2017), no.~1176, v+215.
  \MR{3685951}

\bibitem[KMRT98]{book_involution}
Max-Albert Knus, Alexander Merkurjev, Markus Rost, and Jean-Pierre Tignol,
  \emph{The book of involutions}, American Mathematical Society Colloquium
  Publications, vol.~44, American Mathematical Society, Providence, RI, 1998,
  With a preface in French by J. Tits. \MR{1632779}

\bibitem[{LMF}20]{lmfdb}
The {LMFDB Collaboration}, \emph{The {L}-functions and modular forms database},
  \url{http://www.lmfdb.org}, 2020, [Online; accessed 6 September 2020].

\bibitem[Lor05]{multiplicative}
M.~Lorenz, \emph{Multiplicative invariant theory}, Encyclopaedia of
  Mathematical Sciences, Springer Berlin Heidelberg, 2005.

\bibitem[Lyn48]{lyndon1948}
Roger~C. Lyndon, \emph{The cohomology theory of group extensions}, Duke Math.
  J. \textbf{15} (1948), no.~1, 271--292.

\bibitem[Ono63]{Ono}
Takashi Ono, \emph{On the tamagawa number of algebraic tori}, Annals of
  Mathematics \textbf{78} (1963), no.~1, 47--73.

\bibitem[PR94]{platonov_rapinchuk}
Vladimir Platonov and Andrei Rapinchuk, \emph{Algebraic groups and number
  theory}, Pure and Applied Mathematics, vol. 139, Academic Press, Inc.,
  Boston, MA, 1994, Translated from the 1991 Russian original by Rachel Rowen.
  \MR{1278263}

\bibitem[Rü20]{comput_website}
Thomas Rüd, \emph{Computations of tate-shafarevich for certain maximal
  symplectic tori}, \url{http://www.math.ubc.ca/~thomas/Sha_computations/},
  2020, [Online; accessed 31-August-2020].

\bibitem[Vos95]{voskresenski}
V.~E. Voskresenski\u{\i}, \emph{Ad\`ele groups and {S}iegel-{T}amagawa
  formulas}, J. Math. Sci. \textbf{73} (1995), no.~1, 47--113, Algebra. 1.
  \MR{1317607}

\bibitem[Vos98]{voskresenski_book}
\bysame, \emph{Algebraic groups and their birational invariants}, Translations
  of Mathematical Monographs, vol. 179, American Mathematical Society,
  Providence, RI, 1998, Translated from the Russian manuscript by Boris
  Kunyavski [Boris \`E. Kunyavski\u{\i}]. \MR{1634406}

\end{thebibliography}
\end{document}